\theoremstyle{thmstyleone}%
\newtheorem{theorem}{Theorem}%  meant for continuous numbers
\newtheorem{lemma}{Lemma}%
\theoremstyle{thmstyletwo}%
\theoremstyle{thmstylethree}%
\begin{document}

\title[Physisorbate-Layer Problem]{The Physisorbate-Layer Problem
Arising in Kinetic Theory of Gas-Surface Interaction}

%%=============================================================%%
%% Prefix	-> \pfx{Dr}
%% GivenName	-> \fnm{Joergen W.}
%% Particle	-> \spfx{van der} -> surname prefix
%% FamilyName	-> \sur{Ploeg}
%% Suffix	-> \sfx{IV}
%% NatureName	-> \tanm{Poet Laureate} -> Title after name
%% Degrees	-> \dgr{MSc, PhD}
%% \author*[1,2]{\pfx{Dr} \fnm{Joergen W.} \spfx{van der} \sur{Ploeg} \sfx{IV} \tanm{Poet Laureate} 
%%                 \dgr{MSc, PhD}}\email{iauthor@gmail.com}
%%=============================================================%%

\author*[1]{\fnm{Kazuo} \sur{Aoki}}\email{kazuo.aoki.22v@st.kyoto-u.ac.jp}
%\equalcont{These authors contributed equally to this work.}

\author[2]{\fnm{Vincent} \sur{Giovangigli}}\email{vincent.giovangigli@polytechnique.edu}
%\equalcont{These authors contributed equally to this work.}

\author[3]{\fnm{Fran\c{c}ois} \sur{Golse}}\email{francois.golse@polytechnique.edu}
%\equalcont{These authors contributed equally to this work.}

\author[4]{\fnm{Shingo} \sur{Kosuge}}\email{kosuge.shingo.6r@kyoto-u.ac.jp}
%\equalcont{These authors contributed equally to this work.}

\affil[1]{\orgdiv{Department of Mathematics}, \orgname{National Cheng Kung University},
\orgaddress{%\street{Street},
\city{Tainan} \postcode{70101},
% \state{State},
\country{Taiwan}}}

\affil[2]{\orgdiv{CMAP, CNRS}, \orgname{\'{E}cole Polytechnique}, \orgaddress{
%\street{Street}, 
\city{91128 Palaiseau}, \postcode{Cedex},
%\state{State},
\country{France}}}

\affil[3]{\orgdiv{CMLS}, \orgname{\'{E}cole Polytechnique}, \orgaddress{
%\street{Street}, 
\city{91128 Palaiseau}, \postcode{Cedex}, 
%\state{State},
\country{France}}}

\affil[4]{\orgdiv{Institute for Liberal Arts and Sciences}, \orgname{Kyoto University}, \orgaddress{
%\street{Street}, 
\city{Kyoto} \postcode{606-8501}, 
%\state{State}, 
\country{Japan}}}

%%==================================%%
%% sample for unstructured abstract %%
%%==================================%%

\abstract{
A half-space problem of a linear kinetic equation for gas molecules
physisorbed close to a solid surface, 
relevant to a kinetic model of gas-surface interactions and derived by
Aoki {\it et al.} (K.~Aoki {\it et al.}, in: Phys. Rev. E 106:035306, 2022),
is considered.
The equation contains a confinement potential in the vicinity of the solid
surface and an interaction term between gas molecules and phonons.
It is proved that a unique solution exists when the incoming molecular
flux is specified at infinity. This validates the natural observation
that the half-space
problem serves as the boundary condition for the Boltzmann equation.
It is also proved that the sequence of 
approximate solutions used for the existence proof converges exponentially
fast. In addition, numerical results showing the details
of the solution to the half-space problem are presented.
}

\keywords{Boltzmann equation, Boundary condition, Gas-surface interaction, Physisorbed molecules, Physisorbate layer}

%%\pacs[JEL Classification]{D8, H51}

%%\pacs[MSC Classification]{35A01, 65L10, 65L12, 65L20, 65L70}

\maketitle

%	%	%	%	%
%	%	%	%	%
\section{Introduction}\label{sec:intro}
%	%	%	%	%
%	%	%	%	%

%The Introduction section, of referenced text \cite{bib1} expands on the background of the work (some overlap with the Abstract is acceptable). The introduction should not include subheadings.

%Springer Nature does not impose a strict layout as standard however authors are advised to check the individual requirements for the journal they are planning to submit to as there may be journal-level preferences. When preparing your text please also be aware that some stylistic choices are not supported in full text XML (publication version), including coloured font. These will not be replicated in the typeset article if it is accepted. 

The boundary condition for the Boltzmann equation results from
complex gas-surface interactions and specifies scattering kernels relating
the incident and reflected molecular fluxes at the surface.
The most conventional boundary condition is the Maxwell-type
condition, which is a linear combination of specular and diffuse reflection \cite{C88,S07}.
In addition to it, more general boundary conditions
have been proposed \cite{E67,K71,CL71,C88,L91,S13}.
However, most of these boundary conditions are of mathematical
or empirical nature and are not directly related to physical properties,
such as the characteristics of the gas and surface molecules and interaction
potentials. A more physical approach would be to use molecular dynamics
simulations to understand the relation between the incident and reflected molecular
fluxes \cite{WGK94,MM94,YTH06,WSQL21,CGLB23}. However, although useful for
assessing the existing boundary conditions,
this approach is in general not helpful in the construction of new models.

An alternative physical approach is the {\it kinetic} approach based on
kinetic equations describing the behavior of gas molecules interacting
with the surface molecules
\cite{BKPK86,BKP88,BBK94,BDKKS95,FG08,ACD11,BCM14,BCM16,AGH16,AG19a,AG19b,AG21,TAH21}.
The typical kinetic equations include
a potential generated by fixed crystal molecules and a collision term
with phonons describing the fluctuating part of the potential
of the crystal molecules. 

In a recent paper \cite{AGK22}, a kinetic model of gas-surface interactions,
which follows the line of
\cite{BKPK86,BKP88,BBK94,BDKKS95,FG08,ACD11,BCM14,BCM16,AGH16,AG19a,AG19b,AG21},
was proposed and was used to construct the boundary
condition for the Boltzmann equation. The model contains a confinement potential
in the vicinity of the solid surface, which produces a thin layer of physisorbed
molecules (physisorbate layer), as well as the interaction term between gas molecules
(the Boltzmann collision term) and that between gas molecules and phonons,
where the term ``gas molecules'' is also used for physisorbed gas molecules.
Under the assumptions that (i) the gas-phonon interaction is much more frequent
than the gas-gas interaction inside the physisorbate layer; (ii) the thickness of the
physisorbate layer is
much smaller than the mean free path of the gas molecules; and (iii) the gas-phonon
interaction is described by a simple collision model of relaxation type,
an asymptotic analysis was performed, and
a linear kinetic equation for the physisorbate layer was derived together with
its boundary condition at infinity. The resulting kinetic equation and the boundary
condition at infinity form a half-space problem, in which no boundary condition is
imposed on the solid surface because the confinement potential prevents
the gas molecules from reaching the surface.

Suppose that the half-space problem has a unique solution when the
velocity distribution function of the gas molecule toward the surface
is assigned at infinity. It is a natural assumption based on numerical
computation as well as physical considerations \cite{AGK22}. 
This means that the outgoing velocity distribution
of the gas molecules at infinity is determined by the
incident distribution towards the surface there.
Since the thickness of the physisorbate
layer is much smaller than the mean free path, the infinity in the scale of
the layer can be regarded as the surface of the solid wall in the scale of the mean
free path. Therefore, the half-space problem plays the role of the boundary
condition on the surface for the Boltzmann equation that is valid outside
the physisorbate layer.

In our previous paper \cite{AGK22}, the half-space problem for the physisorbate layer
mentioned above was also solved 
numerically, and the results provided the numerical evidence of the existence
and uniqueness of the solution. In addition, based on an iteration scheme
and its first iteration, an analytic model of the boundary condition for the Boltzmann
equation was constructed, and the numerical assessment of the model
showed its effectiveness.

The first aim of the present study is to rigorously prove the existence and uniqueness
of the solution to the half-space problem for the physisorbate layer
studied numerically and approximately in \cite{AGK22}. In addition,
the sequence of iterative approximate solutions, which is used for the
existence proof, is shown to converge exponentially fast with
respect to the number of iterations. The second aim is to
investigate the behavior of the solution to the
half-space problem numerically.
Since special interest was put on the Boltzmann
boundary condition in \cite{AGK22}, attention was focused not on the
behavior of the
solution itself but on the relation between the incoming and outgoing
molecular fluxes at infinity. In this paper, we put more attention
to the details of the solution in the physisorbate layer and show some
related numerical results.

The paper is organized as follows. The kinetic model for
gas-surface interactions proposed in \cite{AGK22} and the resulting
half-space problem for the physisorbate layer are summarized in
Sec.~\ref{sec:kinetic-layer}.
Section \ref{sec:math} is
devoted to rigorous proof of mathematical properties, such as the
existence and uniqueness of the solution, for the half-space problem.
In Sec.~\ref{sec:numerical}, some numerical results showing the behavior of
the solution to the half-space problem are presented.
Concluding remarks are given in Sec.~\ref{sec:remarks}.

%	%	%	%	%
%	%	%	%	%
\section{Kinetic equations and physisorbate layer}\label{sec:kinetic-layer}
%	%	%	%	%
%	%	%	%	%

In this section, we summarize the kinetic model of gas-surface interactions
proposed in \cite{AGK22} and the resulting half-space problem
for the physisorbate layer.

%	%	%	%	%
\subsection{Kinetic model}\label{subsec:kin-model}
%	%	%	%	%

We consider a single monatomic gas in a half space ($z>0$)
interacting with a plane crystal surface located at $z=0$,
where $\bm{x} = (x, y, z)$ indicates the space coordinates.
We assume that the gas molecules are subject to an interaction
potential $W$ generated by the {\it fixed} crystal molecules.
The interaction potential $W$ is assumed to depend only
on the normal coordinate $z$ for simplicity and is written
in the form
\begin{align}
W (z) = W_\text{s} (z/\delta)
= W_\text{s} (\zeta),
\end{align}
where $\delta$
is a characteristic range of the surface potential and
$\zeta = z/\delta$ denotes the rescaled normal coordinate,
which is dimensionless.
The rescaled potential  $W_\text{s}$ is such that 
\begin{equation}\label{potential}
\lim_{\zeta\to0} W_\text{s} (\zeta) = +\infty,
\qquad\quad
\lim_{\zeta\to+\infty} W_\text{s} (\zeta) = 0,
\end{equation}
and usually involves an attractive zone and a repulsing zone as
Lennard-Jones potentials integrated over all crystal molecules, as
illustrated in Fig.~\ref{fig1}.
To be more specific, we assume the following:
\begin{itemize}
\item[(i)] the potential $W_\text{s} (\zeta)$ is a smooth function of $\zeta$
and has a single minimum $W_\text{min} (< 0)$
at $\zeta=\zeta_\text{min} (>0)$, i.e. $W_\text{min} = W_\text{s} (\zeta_\text{min})$;
\item[(ii)] in the interval $(0, \zeta_\text{min})$, $W_\text{s} (\zeta)$ decreases from
$+\infty$ to $W_\text{min}$ monotonically, so that $(0, \zeta_\text{min})$
is the repulsive zone;
\item[(iii)] in the interval $(\zeta_\text{min}, \infty)$,
$W_\text{s} (\zeta)$ increases from $W_\text{min}$ to 0
monotonically, so that $(\zeta_\text{min}, \infty)$ is the attractive zone;
\item[(iv)] in the repulsive zone $(0, \zeta_\text{min})$, $W_\text{s} (\zeta)$ is convex
downward.
\end{itemize}
\noindent
The gas molecules that are trapped by the potential well
are called the physisorbed molecules and the set of such
molecules forms the physisorbate.

%%%%%%%%%%%%%%%%%%%%%%%%%%%%%%
\begin{figure}[htb]
\begin{center}
\includegraphics[width= 0.57\textwidth]{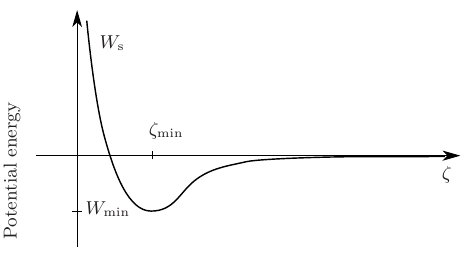}
\caption{\label{fig1}
Typical surface interaction potential $W_\text{s}$ as function of
$\zeta$.}
\end{center}
\end{figure}
%%%%%%%%%%%%%%%%%%%%%%%%%%%%%%

The behavior of the gas is assumed to be governed by the
following kinetic equation of Boltzmann type
\cite{BKPK86,BKP88,BDKKS95,ACD11,AG19a,AG19b,AG21,AGH16,AGK22}:
\begin{equation}
\label{BE-dim}
\frac{\partial f}{\partial t} + \bm{c} \bm{\cdot} \frac{\partial f}{\partial \bm{x}} 
-
\frac{1}{m} \frac{\text{d} W}{\text{d} z} \frac{\partial f}{\partial c_z}
= J (f, f) + J_\text{ph} (f),
\end{equation}
where $m$ is the mass of a gas molecule,  $f(t, \bm{x}, \bm{c})$
is the velocity distribution function for
the gas molecules (including the physisorbed molecules), $t$ is the time
variable, $\bm{c}=(c_x, c_y, c_z)$ is the velocity of the gas molecules
with $c_x$, $c_y$, and $c_z$ being its $x$, $y$, and $z$ components, $J(f, f)$
is the Boltzmann collision operator describing the gas-gas collision,
and $J_\text{ph} (f)$ is the gas-phonon collision operator.

Since the explicit form of the Boltzmann collision operator $J(f, f)$
is not relevant to the present paper, it is omitted here. In \cite{AGK22},
under the assumption that the phonons are in equilibrium,
the following simple model of relaxation type is used for the gas-phonon
collision operator $J_\text{ph} (f)$:
\begin{equation}\label{Jph-dim}
J_\text{ph} (f) = \frac{1}{\tau_\text{ph}} \big( n M - f \big).
\end{equation}
Here, $\tau_\text{ph}$ is the relaxation time of gas-phonon interactions
and $n$ and $M$ are, respectively, the molecular number density
and the wall Maxwellian given by
\begin{subequations}\label{n-M}
\begin{align}
& n = \int_{\mathbf{R}^3} f \, \text{d} \bm{c},
\label{n-M-a} \\
& M = \left( \frac{m}{2 \pi k_\text{B} T_\text{w}} \right)^{3/2}
\exp \left( - \frac{m \vert \bm{c} \vert^2}{2 k_\text{B} T_\text{w}}
\right),
\label{n-M-b}
\end{align}
\end{subequations}
where $k_\text{B}$ is the Boltzmann
constant, $T_\text{w}$ is the temperature of the solid wall, and the
domain of integration in \eqref{n-M-a} is the whole space of $\bm{c}$.
The model \eqref{Jph-dim} was inspired by \cite{BKPK86,BKP88}
and was used in \cite{ACD11,AG19a,AG19b,AG21,AGH16,AGK22}.
We further assume that $\tau_\text{ph}$ has the same length scale of variation
as the potential $W$ and is a function of the scaled normal coordinate $\zeta$,
i.e.,
\begin{align}
\tau_\text{ph} (z) = \tau_\text{ph,s} (z/\delta)
= \tau_\text{ph,s} (\zeta).
\end{align}
Since there is no interaction between molecules and phonons 
far from the surface, we naturally assume that \cite{AGK22}
\begin{align}\label{tauph-limit}
\lim_{z\to\infty} \tau_\text{ph}(z)
= \lim_{\zeta\to\infty} \tau_\text{ph,s}(\zeta) = \infty.
\end{align}
It is also natural to assume that

\begin{itemize}
\item[(v)]
$\tau_{\text{ph,s}} (\zeta)$
is an increasing function of $\zeta$ in $[0, +\infty)$ with a finite
positive $\tau_{\text{ph,s}} (0)$;
\item[(vi)]
$1/\tau_\text{ph,s}(\zeta)$ is integrable over $[0, +\infty)$.
\end{itemize}

Since $J_\text{ph}$ as well as the potential $W$ vanishes
far from the surface, we may let $z \to \infty$ (or $\zeta \to \infty$) 
in \eqref{BE-dim} to obtain the kinetic equation in the gas phase
\begin{equation}
\label{BE-gas-dim}
\frac{\partial f_\text{g}}{\partial t} 
+ \bm{c} \bm{\cdot} \frac{\partial f_\text{g}}{\partial \bm{x}} 
= J (f_\text{g}, f_\text{g}),
\end{equation}
where $f_\text{g} (t, \bm{x}, \bm{c})$ denotes the velocity distribution
function of gas molecules. Equation \eqref{BE-gas-dim} is the standard
Boltzmann equation for a monatomic gas, and the distribution $f$
must converge to $f_\text{g}$ far from the surface.

%	%	%	%	%
\subsection{Normalization and parameter setting}\label{subsec:normalization}
%	%	%	%	%

In order to nondimensionalize the kinetic equation \eqref{BE-dim},
we introduce characteristic quantities that are marked with the 
${}^\star$ superscript. We denote by 
$n^\star$ the characteristic number density, 
$c^\star = (k_\text{B} T_\text{w}/m)^{1/2}$ the characteristic thermal speed,
$f^\star = n^\star/c^{\star 3}$ the characteristic molecular velocity distribution,
$\tau^\star_\text{fr}$ the characteristic mean free time,
$\lambda^\star = \tau^\star_\text{fr} c^\star$ the characteristic mean free path,
$W^\star = m c^{\star 2} = k_\text{B} T_\text{w}$ the characteristic potential, and
$\tau_\text{ph}^\star$ the characteristic time for gas-phonon interaction.
We recall that $\delta$ is the distance normal to the surface where the potential
$W$ is significant, so that $\tau_\text{la}^\star = \delta/c^\star$ indicates
the corresponding characteristic time of transit through the potential.

With these characteristic quantities, we introduce the
dimensionless quantities
$\hat{t}$, $\hat{\bm{x}}$, $\hat{\bm{c}}$, $\hat{n}$, $\hat{f}$,
$\hat{f}_\text{g}$, $\hat{M}$, $\hat{W}$, $\hat{\tau}_\text{ph}$, and $\hat{W}_\text{min}$,  
which correspond to 
$t$, $\bm{x}$, $\bm{c}$, $n$, $f$, $f_\text{g}$, $M$, $W$,
$\tau_\text{ph}$, and $W_\text{min}$,
respectively,
by the following relations:
\begin{align}\label{var-dimless}
\begin{aligned}
& \hat{t} = t/\tau_\text{fr}^\star, \qquad
\hat{\bm{x}} = \bm{x}/\lambda^\star, \qquad
\hat{\bm{c}} = \bm{c}/c^\star, \qquad \hat{n} = n/n^\star,
\\
& \hat{f} = f \, c^{\star 3}/n^\star, \qquad
\hat{f}_\text{g} = f_\text{g} \, c^{\star 3}/n^\star, \qquad
\hat{M} = M c^{\star 3},
\\
&
\hat{W} (\zeta) = W (z)/W^\star = W_\text{s} (\zeta)/k_\text{B} T_\text{w},
\quad
\hat{\tau}_\text{ph} (\zeta) = \tau_\text{ph} (z)/\tau_\text{ph}^\star
= \tau_\text{ph,s} (\zeta)/\tau_\text{ph}^\star,
\\
& \hat{W}_\text{min} = W_\text{min}/k_\text{B} T_\text{w}.
\end{aligned}
\end{align}
Correspondingly, the collision operators $J(f, f)$
and $J_\text{ph} (f)$ are nondimensionalized as
\begin{align}\label{J-dimless}
J(f, f) = \frac{n^\star}{\tau_\text{fr}^\star c^{\star 3}} \hat{J} (\hat{f}, \hat{f}), 
\end{align}
and
\begin{align}\label{Jph-dimless}
J_\text{ph} (f) = \frac{n^\star}{\tau_\text{ph}^\star c^{\star 3}} \hat{J}_\text{ph} (\hat{f}),
\qquad
\hat{J}_\text{ph} (\hat{f}) = \frac{1}{\hat{\tau}_\text{ph}} (\hat{n} \hat{M} - \hat{f}),
\end{align}
where
\begin{subequations}\label{n-M-dimless}
\begin{align}
& \hat{n} = \int_{\mathbf{R}^3} \hat{f} \text{d} \hat{\bm{c}},
\label{n-M-dimless-a} \\
& \hat{M} = (2\pi)^{-3/2} \exp \bigl( - \vert \hat{\bm{c}} \vert^2/2 \bigr),
\label{n-M-dimless-b}
\end{align}
\end{subequations}
and the domain of integration is the whole space of $\hat{\bm{c}}$.
The explicit form of $\hat{J} (\hat{f}, \hat{f})$, which is not relevant in
this paper, is omitted.

Substituting \eqref{var-dimless}--\eqref{n-M-dimless} into \eqref{BE-dim}, we
obtain the dimensionless version of \eqref{BE-dim}, which is characterized by
the following two dimensionless parameters: 
\begin{equation}\label{two-epsilons}
\epsilon_\text{ph} = \frac{\tau_\text{ph}^\star}{\tau^\star_\text{fr}},
\qquad
\epsilon = \frac{\delta}{\lambda^\star}
= \frac{\tau_\text{la}^\star}{\tau^\star_\text{fr}}.
\end{equation}
The kinetic scaling introduced in \cite{AGK22} reads
\begin{align}\label{epsilon-assump}
\epsilon_\text{ph} = \epsilon \ll 1.
\end{align}
This means that the effective range $\delta$ of the potential, which
is also the effective range of the gas-phonon interactions, is much
shorter than the characteristic mean free path $\lambda^\star$.
Therefore, the molecules trapped by the potential and interacting with the
phonons form a thin layer, which may be called the physisorbate
layer, in the scale of the mean free path. Equation \eqref{epsilon-assump}
also indicates that the characteristic time for gas-phonon interactions
$\tau_\text{ph}^{\star}$ is the same as the transit time across the layer
$\tau_\text{la}^{\star}$ and is much smaller than the
mean free time $\tau_\text{fr}^{\star}$.
The parameter setting \eqref{epsilon-assump} may be the simplest
{\it kinetic scaling} for the present model of the physisorbate layer.
This differs from the {\it fluid scaling} used in the derivation of
fluid-type boundary conditions \cite{AGH16,AG19a,AG19b,AG21}.    

In summary, we obtain the dimensionless version of \eqref{BE-dim}
in the following form:
\begin{align}
\label{BE-dimless}
\frac{\partial \hat{f}}{\partial \hat{t}}
+ 
\hat{\bm{c}}_\parallel \bm{\cdot} \frac{\partial \hat{f}}{\partial \hat{\bm{x}}_\parallel}
+
\hat{c}_z \frac{\partial \hat{f}}{\partial \hat{z}}
-
\frac{1}{\epsilon}
\frac{\text{d} \hat{W}}{\text{d} \zeta} \,
\frac{\partial \hat{f}}{\partial \hat{c}_z}
=
\frac{1}{\epsilon}
\hat{J}_\text{ph} (\hat{f})
+
\hat{J} (\hat{f}, \hat{f}),
\end{align}
where $\hat{\bm{c}}_\parallel = (\hat{c}_x, \hat{c}_y)$ and
$\hat{\bm{x}}_\parallel = (\hat{x}, \hat{y})$.
By taking the limit $\zeta \to \infty$ of \eqref{BE-dimless}, we recover
the dimensionless version of the Boltzmann equation \eqref{BE-gas-dim}
in the gas phase, i.e.
\begin{align}\label{BE-gas-dimless}
\frac{\partial \hat{f}_\text{g}}{\partial \hat{t}} 
+ \hat{\bm{c}}\, \bm{\cdot} \frac{\partial \hat{f}_\text{g}}{\partial \hat{\bm{x}}}
= \hat{J} (\hat{f}_\text{g}, \hat{f}_\text{g}).
\end{align}
%

%	%	%	%	%
\subsection{Physisorbate layer and boundary condition for Boltzmann equation}
\label{subsec:physisorbate}
%	%	%	%	%

In order to investigate the physisorbate layer, we assume
\begin{align}
\hat{f} = \hat{f} (\hat{t}, \hat{\bm{x}}_\parallel, \zeta, \hat{\bm{c}}_\parallel, \hat{c}_z),
\end{align}
because $\zeta = z/\delta = \hat{z}/\epsilon$ is the appropriate
normal coordinate for the layer. Then, \eqref{BE-dimless} is recast as
\begin{align}
\label{BE-layer}
\epsilon \left( \frac{\partial \hat{f}}{\partial \hat{t}}
+ 
\hat{\bm{c}}_\parallel \bm{\cdot} \frac{\partial \hat{f}}{\partial \hat{\bm{x}}_\parallel} \right)
+
\hat{c}_z \frac{\partial \hat{f}}{\partial \zeta}
-
\frac{\text{d} \hat{W}}{\text{d} \zeta} \,
\frac{\partial \hat{f}}{\partial \hat{c}_z}
=
\hat{J}_\text{ph} (\hat{f})
+
\epsilon \hat{J} (\hat{f}, \hat{f}).
\end{align}

This form suggests that $\hat{f}$ and thus $\hat{f}_\text{g}$ be expanded as
$\hat{f} = \hat{f}^{\langle 0 \rangle} + O(\epsilon)$ and 
$\hat{f}_\text{g} = \hat{f}_\text{g}^{\langle 0 \rangle} + O(\epsilon)$, respectively.
It is obvious that $\hat{f}_\text{g}^{\langle 0 \rangle}$ is also governed
by the Boltzmann equation \eqref{BE-gas-dimless}.
In the following, we consider only the zeroth order terms in $\epsilon$ and
identify $\hat{f}^{\langle 0 \rangle}$ and $\hat{f}_\text{g}^{\langle 0 \rangle}$
with $\hat{f}$ and $\hat{f}_\text{g}$, respectively (or equivalently, we omit the
superscript $\langle 0 \rangle$).
From \eqref{BE-layer}, \eqref{Jph-dimless}, and \eqref{n-M-dimless},
the equation for the zeroth order is obtained as
\begin{align}
\label{BE-physisorbate}
\hat{c}_z\, \frac{\partial \, \hat{f}}
{\partial \zeta}
-
\frac{\text{d} \hat{W} (\zeta) }{\text{d} \zeta} \, 
\frac{\partial \, \hat{f}}{\partial \hat{c}_z}
=
\frac{1}{\hat{\tau}_\text{ph} (\zeta)} (\hat{n} \hat{M} - \hat{f}),
\end{align}
where $\hat{n}$ and $\hat{M}$ are given by \eqref{n-M-dimless-a}
and \eqref{n-M-dimless-b}, respectively. Note that $\hat{n}$ here
is the zeroth-order number density in the physisorbate layer.
Equation \eqref{BE-physisorbate} is the kinetic equation
governing the physisorbate layer that will be investigated in the following.

Integrating both sides of \eqref{BE-physisorbate} with respect to $\hat{\bm{c}}$
over the whole space, we have
$(\partial/\partial \zeta)\int_{{\bf{R}}^3} \hat{c}_z \hat{f} \text{d}\hat{\bm{c}}=0$,
which leads to
\begin{align}\label{cons-c}
\int_{{\bf{R}}^3} \hat{c}_z \hat{f} \text{d}\hat{\bm{c}}=0,
\end{align}
because $\hat{f} \to 0$ as $\zeta \to 0$. This indicates the particle conservation.

As discussed in \cite{AGK22}, the connection condition between
the inner physisorbate layer and the outer gas domain at the zeroth order
is given by
\begin{align}\label{connection}
\hat{f} (\hat{t}, \hat{\bm{x}}_\parallel, \zeta \to \infty, \hat{\bm{c}}_\parallel, \hat{c}_z)
= \hat{f}_\text{g} (\hat{t}, \hat{\bm{x}}_\parallel, \hat{z}=0, \hat{\bm{c}}_\parallel, \hat{c}_z).
\end{align}
Note that $\hat{f}_\text{g}$ and thus \eqref{BE-gas-dimless} have been
extended to the crystal surface $\hat{z} = 0$.
Condition \eqref{connection} means
physically that the outer edge of the inner physisorbate layer
may be identified with the solid surface for the outer gas domain
(see \cite{AGK22} for a more quantitative argument).

As pointed out in \cite{AGK22}, \eqref{BE-physisorbate} is likely to have
a unique solution when $\hat{f}$ for the molecules toward the surface
($\hat{c}_z < 0$) is imposed at infinity, that is,
\begin{align}\label{cond-inf}
\hat{f} (\hat{t}, \hat{\bm{x}}_\parallel, \zeta, \hat{\bm{c}}_\parallel, \hat{c}_z)
\to \hat{f}_\infty (\hat{t}, \hat{\bm{x}}_\parallel, \hat{\bm{c}}_\parallel, \hat{c}_z),
\quad \text{as} \;\;\; \zeta \to \infty, \;\;\; \text{for} \;\;\; \hat{c}_z < 0,
\end{align}
where $\hat{f}_\infty$ is an arbitrary function of $\hat{t}$, $\hat{\bm{x}}_\parallel$,
$\hat{\bm{c}}_\parallel$, and $\hat{c}_z$ consistent with $\hat{f}_\text{g}$
at $\hat{z}=0$. This property, which has been confirmed
numerically in \cite{AGK22}, will be established mathematically in Sec.~\ref{sec:math}. 

Thus, the kinetic equation \eqref{BE-physisorbate} with
the boundary condition \eqref{cond-inf} determines the solution $\hat{f}$
and thus $\hat{f}$ for $\hat{c}_z > 0$ at infinity, and this constitutes
the physisorbate-layer problem. This means that the solution
defines the operator $\Lambda$ that maps $\hat{f}$ for $\hat{c}_z < 0$
to $\hat{f}$ for $\hat{c}_z > 0$ at infinity, i.e.,
\begin{align}\label{albedo-1}
\hat{f} (\hat{t}, \hat{\bm{x}}_\parallel, \zeta \to \infty, \hat{\bm{c}}_\parallel, \hat{c}_z > 0)
= \Lambda
\hat{f} (\hat{t}, \hat{\bm{x}}_\parallel, \zeta \to \infty, \hat{\bm{c}}_\parallel, \hat{c}_z < 0),
\end{align}
or equivalently, because of \eqref{connection},
\begin{align}\label{albedo-2}
\hat{f}_\text{g} (\hat{t}, \hat{\bm{x}}_\parallel,  \hat{z}=0, \hat{\bm{c}}_\parallel, \hat{c}_z > 0)
= \Lambda
\hat{f}_\text{g} (\hat{t}, \hat{\bm{x}}_\parallel,  \hat{z}=0, \hat{\bm{c}}_\parallel, \hat{c}_z < 0).
\end{align}
This relation indicates that the operator $\Lambda$ provides the boundary
condition for the Boltzmann equation on the surface $\hat{z}=0$.

%	%	%	%	%
\subsection{Half-space problem for the physisorbate layer}\label{subsec:half-space}
%	%	%	%	%

Equation \eqref{BE-physisorbate} and boundary condition \eqref{cond-inf}
form a boundary-value problem in the half space $\zeta> 0$. In this subsection,
the problem will be transformed into some different forms for later convenience.
Since the variables $\hat{t}$ and $\hat{\bm{x}}_\parallel$ are just the
parameters, we will omit them hereafter.

Here, we simplify some notations for convenience in the mathematical
arguments in Sec.~\ref{sec:math}. To be more specific, we omit the hat $\hat{}$ for the
dimensionless variables and the subscript ph of $\hat{\tau}_\text{ph}$,
that is,
\begin{align}\label{notation-ch}
(\hat{f},\, \hat{\bm{c}}_\parallel,\, \hat{c}_z,\, \hat{W},\,
\hat{\tau}_\text{ph},\, \hat{n},\, \hat{M},\, \hat{f}_\infty,\, \hat{W}_\text{min}) 
\Rightarrow
(f,\, \bm{c}_\parallel,\, c_z,\, W,\, \tau,\, n,\, M,\, f_\infty,\, W_\text{min}).
\end{align}
No confusion is expected with these changes. Then, the half-space
problem \eqref{BE-physisorbate} and \eqref{cond-inf} reads as follows:
\begin{subequations}\label{HS-1}
\begin{align}
& c_z\, \frac{\partial \, f}{\partial \zeta} -
\frac{\text{d} W (\zeta) }{\text{d} \zeta} \, 
\frac{\partial \, f}{\partial c_z}
= \frac{1}{\tau (\zeta)} (n M - f),
\label{HS-1-a} \\
& n = \int_{\mathbf{R}^3} f \text{d} \bm{c},
\label{HS-1-b} \\
& M = (2\pi)^{-3/2} \exp \left( - \vert \bm{c} \vert^2/2 \right),
\label{HS-1-c} \\
& f \to f_\infty (\bm{c}_\parallel, c_z), \;\;\; \text{for}\;\; c_z < 0,
\;\;\; \text{as} \;\; \zeta \to \infty.
\label{HS-1-d}
\end{align}
\end{subequations}

If we introduce the marginal
\begin{align}\label{marginal}
F(\zeta, c_z) = \int_{-\infty}^\infty \int_{-\infty}^\infty
f(\zeta, \bm{c}_\parallel, c_z) \text{d} c_x \text{d} c_y,
\end{align}
and integrate \eqref{HS-1} with respect to $c_x$ and $c_y$ each from
$-\infty$ to $\infty$, we obtain the following half-space problem
for $F(\zeta, c_z)$:
\begin{subequations}\label{HS-2}
\begin{align}
& c_z\, \frac{\partial \, F (\zeta, c_z)}{\partial \zeta} -
\frac{\text{d} W (\zeta) }{\text{d} \zeta} \, 
\frac{\partial \, F (\zeta, c_z)}{\partial c_z}
= \frac{1}{\tau (\zeta)} [n (\zeta) \mathcal{M} (c_z) - F (\zeta, c_z) ],
\label{HS-2-a} \\
& n (\zeta) = \int_{-\infty}^\infty F (\zeta, c_z) \text{d} c_z,
\label{HS-2-b} \\
& \mathcal{M} (c_z) = (2\pi)^{-1/2} \exp \left( - c_z^2/2 \right),
\label{HS-2-c} \\
& F (\zeta, c_z) \to F_\infty (c_z), \;\;\; \text{for}\;\; c_z < 0,
\;\;\; \text{as} \;\; \zeta \to \infty,
\label{HS-2-d}
\end{align}
\end{subequations}
where $F_\infty (c_z) = \int_{-\infty}^\infty \int_{-\infty}^\infty
f_\infty (\bm{c}_\parallel, c_z) \text{d}c_x \text{d}c_y$.

Now, let us put
\begin{align}\label{def-varepsilon}
\varepsilon = \frac{1}{2} c_z^2 + W (\zeta).
\end{align}
Then, for each $\varepsilon \in [W_\text{min}, \infty)$,
the range of $\zeta$ is as follows:
\begin{align}\label{range-zeta}
\begin{cases}
\ [\zeta_a (\varepsilon),\ \infty) \quad \text{for} \;\; \varepsilon \ge 0, \\
\ [\zeta_a (\varepsilon),\ \zeta_b (\varepsilon)] \quad
\text{for} \;\; W_\text{min}\le \varepsilon<0,
\end{cases}
\end{align}
where $\zeta_a (\varepsilon)$ is the solution of $\varepsilon=W (\zeta)$
for $\varepsilon \ge 0$, and $\zeta_a (\varepsilon)$ and $\zeta_b (\varepsilon)$
are the two solutions of the same equation satisfying
$\zeta_a (\varepsilon)
\le \zeta_\text{min} \le \zeta_b (\varepsilon)$
for $W_\text{min} \le \varepsilon<0$.
The locations of $\zeta_a (\varepsilon)$ and $\zeta_b (\varepsilon)$
are shown schematically in Fig.~\ref{fig2}.

%%%%%%%%%%%%%%%%%%%%%%%%%%%%%%
\begin{figure}[htb]
\begin{center}
\includegraphics[width= 0.90\textwidth]{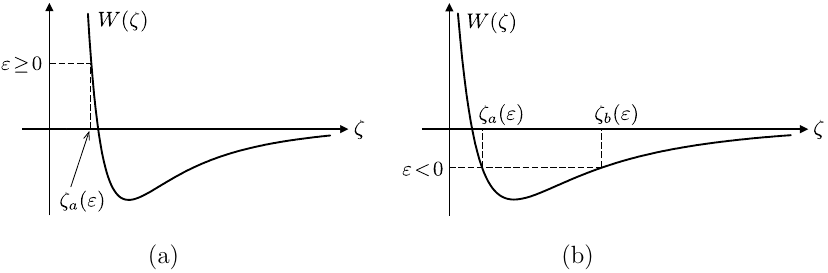}
\caption{\label{fig2}
The locations of $\zeta_a (\varepsilon)$ and $\zeta_b (\varepsilon)$.
(a) $\varepsilon \ge 0$, (b) $W_\text{min} \le \varepsilon<0$.
}
\end{center}
\end{figure}
%%%%%%%%%%%%%%%%%%%%%%%%%%%%%%

Using \eqref{def-varepsilon}, we transform the independent variables from
$(\zeta, c_z)$ to $(\zeta, \varepsilon)$ and define
\begin{align}
F_\pm (\zeta, \varepsilon)
= F \left( \zeta, \pm \, \sqrt{2 [\varepsilon-W(\zeta)] } \right).
\end{align}
Note that $F_+$ corresponds to $c_z > 0$, and $F_-$ to $c_z < 0$.
Then, the problem \eqref{HS-2} is transformed to
\begin{subequations}\label{HS-3}
\begin{align} 
& \pm \sqrt{2 [\varepsilon-W(\zeta)]}\,
\frac{\partial\, F_\pm (\zeta, \varepsilon)}{\partial \zeta} 
= \frac{1}{\tau (\zeta)} \left[ 
n (\zeta) \mathcal{M} (\zeta, \varepsilon)  - F_\pm (\zeta, \varepsilon) \right],
\label{HS-3-a} \\ 
& n (\zeta) = \int_{W(\zeta)}^\infty
[ F_- (\zeta, \varepsilon) + F_+ (\zeta, \varepsilon) ]
\frac{\text{d}\varepsilon}{\sqrt{2 [\varepsilon-W(\zeta)]}},
\label{HS-3-b} \\
& \mathcal{M} (\zeta, \varepsilon) = (2\pi)^{-1/2} 
\exp \left( - \varepsilon + W (\zeta) \right), 
\label{HS-3-c} \\
& F_- (\zeta, \varepsilon) \to F_\infty (-\sqrt{2\varepsilon}), \;\;\; \text{for}\;\; \varepsilon > 0,
\;\;\; \text{as} \;\; \zeta \to \infty,
\label{HS-3-d}
\end{align}
\end{subequations}
where $\mathcal{M} (\zeta, \varepsilon)$ is the expression
of $\mathcal{M}(c_z)$ in \eqref{HS-2-c} in terms of $\zeta$ and $\varepsilon$.
In addition, we need to assume the continuity condition
\begin{subequations} \label{HS-3-continuity}
\begin{align}
& F_+ \left( \zeta_a (\varepsilon), \varepsilon \right)  
= F_- \left( \zeta_a (\varepsilon), \varepsilon \right),
\quad  \text{for}\ \ \varepsilon > W_\text{min},
\label{HS-3-continuity-a} \\
& F_- \left( \zeta_b (\varepsilon), \varepsilon \right)  
= F_+ \left( \zeta_b (\varepsilon), \varepsilon \right),
\quad  \text{for}\ \ W_\text{min} < \varepsilon < 0,
\label{HS-3-continuity-b}
\end{align}
\end{subequations}
at $\zeta=\zeta_a (\varepsilon)$ and $\zeta_b (\varepsilon)$ to complete the
half-space problem in the new variables $(\zeta, \varepsilon)$.
The conditions \eqref{HS-3-continuity-a} and \eqref{HS-3-continuity-b} are natural
because the molecules with energy $\varepsilon$ reaching the points
$\zeta = \zeta_a (\varepsilon)$  and $\zeta_b (\varepsilon)$
stop ($c_z=0$) there and then change the direction of motion.

In the problem \eqref{HS-2} or \eqref{HS-3}, the dependence on $c_x$ and
$c_y$ has been averaged out because of \eqref{marginal}. However, once
the solution $F (\zeta, c_z)$ or
$F_\pm (\zeta, \varepsilon)$ is obtained, the density $n (\zeta)$ is
also known. Therefore, \eqref{HS-1-a} reduces to a PDE, which can be solved
in response to the boundary condition \eqref{HS-1-d}. Therefore,
the problem \eqref{HS-1} and the problem \eqref{HS-2} or \eqref{HS-3}
are equivalent. Here, we note that
$[F_\pm (\zeta, \varepsilon),\, W (\zeta),\, n (\zeta),\,
\mathcal{M} (\zeta, \varepsilon),\, F_\infty (-\sqrt{2 \varepsilon}),\, \tau (\zeta)]$
in this subsection are equal to
$[\phi_\pm (\zeta, \varepsilon),\, \hat{\textsc{w}} (\zeta),\, \hat{\mathsf{n}} (\zeta),\,
\hat{\textsc{m}}_\text{m} (\zeta, \varepsilon),\, \phi_\infty (-\sqrt{2 \varepsilon}),\,
\hat{\tau}_\text{ph} (\zeta)]$ in Sec.~VI A in \cite{AGK22}.

%	%	%	%	%
\subsection{Iteration scheme}\label{subsec:iteration}
%	%	%	%	%

On the basis of \eqref{HS-3}, the following iteration scheme is defined:
\begin{subequations}\label{HS-3-it}
\begin{align} 
& \pm \sqrt{2 [\varepsilon-W(\zeta)]}\,
\frac{\partial\, F_\pm^k (\zeta, \varepsilon)}{\partial \zeta} 
= \frac{1}{\tau (\zeta)} \left[ 
n^{k-1} (\zeta) \mathcal{M} (\zeta, \varepsilon)  - F_\pm^k (\zeta, \varepsilon) \right],
\label{HS-3-it-a} \\ 
& n^k (\zeta) = \int_{W(\zeta)}^\infty
[ F_-^k (\zeta, \varepsilon) + F_+^k (\zeta, \varepsilon) ]
\frac{\text{d}\varepsilon}{\sqrt{2 [\varepsilon-W(\zeta)]}},
\label{HS-3-it-b} \\
& F_-^k (\zeta, \varepsilon) \to F_\infty (-\sqrt{2\varepsilon}), \;\;\; \text{for}\;\; \varepsilon > 0,
\;\;\; \text{as} \;\; \zeta \to \infty,
\label{HS-3-it-c} \\
& F_+^k \left( \zeta_a (\varepsilon), \varepsilon \right)  
= F_-^k \left( \zeta_a (\varepsilon), \varepsilon \right),
\quad  \text{for}\ \ \varepsilon > W_\text{min},
\label{HS-3-it-d} \\
& F_-^k \left( \zeta_b (\varepsilon), \varepsilon \right)  
= F_+^k \left( \zeta_b (\varepsilon), \varepsilon \right),
\quad  \text{for}\ \ W_\text{min} < \varepsilon < 0,
\label{HS-3-it-e}
\end{align}
\end{subequations}
where $F_\pm^k$ and $n^k$ are the $k$th iteration corresponding to
$F_\pm$ and $n$, respectively. This scheme, starting from the zero initial values, i.e.,
\begin{align}\label{HS-3-it-init}
F_\pm^0 = n^0 = 0,
\end{align}
will be used in the proofs in Sec.~\ref{sec:math}. In \cite{AGK22}, essentially the same
scheme with different initial values has been used in the numerical analysis of
the problem \eqref{HS-3}, as well as in the construction of a model
of the boundary condition for the Boltzmann equation.

From \eqref{HS-3-it}, $F_\pm^k$ can be solved in terms of $n^{k-1}$
as follows:
\begin{subequations}\label{HS-3-it-intg}
\begin{align}
& F_+^k (\zeta, \varepsilon) = \theta (\zeta_a(\varepsilon), \zeta; \varepsilon)
\Bigg( F_-^k (\zeta_a(\varepsilon), \varepsilon)
\nonumber \\
& \qquad \qquad \qquad \quad
+ \int_{\zeta_a(\varepsilon)}^\zeta \theta(s, \zeta_a(\varepsilon); \varepsilon)
\frac{n^{k-1}(s) \mathcal{M}(s, \varepsilon) \text{d}s}
{\tau(s) \sqrt{2 [\varepsilon - W(s)]}} \Bigg),
\label{HS-3-it-intg-a} \\
& F_-^k (\zeta, \varepsilon) = \bm{1}_{\varepsilon > 0}\,
\theta (\zeta, \infty; \varepsilon)
\Bigg( F_\infty (-\sqrt{2\varepsilon})
\nonumber \\
& \qquad \qquad \qquad \quad
+ \int_{\zeta}^\infty \theta(\infty, s; \varepsilon)
\frac{n^{k-1}(s) \mathcal{M}(s, \varepsilon) \text{d}s}
{\tau(s) \sqrt{2 [\varepsilon - W(s)]}} \Bigg)
\nonumber \\
& \qquad \qquad \;\;
+ \bm{1}_{W_\text{min} < \varepsilon<0}\,
\theta (\zeta, \zeta_b(\varepsilon); \varepsilon)
\Bigg( F_+^k (\zeta_b(\varepsilon), \varepsilon)
\nonumber \\
& \qquad \qquad \qquad
+ \int_{\zeta}^{\zeta_b(\varepsilon)} \theta(\zeta_b(\varepsilon), s; \varepsilon)
\frac{n^{k-1}(s) \mathcal{M}(s, \varepsilon) \text{d}s}
{\tau(s) \sqrt{2 [\varepsilon - W(s)]}} \Bigg),
\label{HS-3-it-intg-b}
\end{align}
\end{subequations}
where
\begin{align}\label{theta}
\theta(a, b; \varepsilon) = 
\exp\left(- \int_a^b \frac{\text{d} s}{\tau(s)\sqrt{2[\varepsilon-W(s)]}}\right),
\end{align}
and $\bm{1}_{\mathcal{S}}$ is the indicator function of the set $\mathcal{S}$, that is,
$\bm{1}_{\mathcal{S}} =1$ for $w \in \mathcal{S}$ and $\bm{1}_{\mathcal{S}} =0$
for $w \notin \mathcal{S}$ with $w$ being the relevant variable.
It should be noted that $F_-^k (\zeta, \varepsilon)$ and thus $F_+^k (\zeta, \varepsilon)$
are generally discontinuous at $\varepsilon=0$, i.e.,
$\lim_{\varepsilon \to 0^-} F_\pm^k (\zeta, \varepsilon) \ne \lim_{\varepsilon \to 0^+} F_\pm^k (\zeta, \varepsilon)$.
If necessary, the value of $F_\pm (\zeta, \varepsilon)$ at $\varepsilon=0$ may
naturally be defined by $\lim_{\varepsilon \to 0^+} F_\pm (\zeta, \varepsilon)$.
We should also note that
$\bm{1}_{\varepsilon > 0}$ and $\bm{1}_{W_\text{min} < \varepsilon<0}$
in \eqref{HS-3-it-intg} implicitly mean  
$\bm{1}_{\varepsilon > 0} \bm{1}_{\zeta > \zeta_a(\varepsilon)}$
and
$\bm{1}_{W_\text{min} < \varepsilon<0} \bm{1}_{\zeta_a(\varepsilon) < \zeta < \zeta_b(\varepsilon)}$,
respectively, since the range of $\zeta$ is given by \eqref{range-zeta}.
This convention will be used unless confusion arises.

One easily checks that
\begin{subequations}\label{theta-prop-1}
\begin{align}
& \theta (a, b; \varepsilon)\, \theta (b, c; \varepsilon) = \theta (a, c; \varepsilon),
\qquad
\theta (b, a; \varepsilon) = \theta(a, b; \varepsilon)^{-1},
\\
& a < b \implies 0 < \theta(a, b; \varepsilon) < 1, \qquad
a > b \implies \theta(a, b; \varepsilon) > 1.
\end{align}
\end{subequations}
On the other hand, defining 
\begin{align}\label{def-mu}
\mu(s,\varepsilon):=\frac{1}{\tau(s)\sqrt{2[\varepsilon-W(s)]}}>0,
\end{align}
one readily obtains
\begin{align}\label{diff-theta}
\partial_a \theta (a, b; \varepsilon) = \theta (a, b; \varepsilon) \mu(a, \varepsilon),
\qquad
\partial_b \theta (a, b; \varepsilon) = - \theta(a, b; \varepsilon) \mu(b, \varepsilon).
\end{align}
These properties of the function $\theta(a, b; \varepsilon)$ will be used
repeatedly in Sec.~\ref{sec:math}.

%	%	%	%	%
%	%	%	%	%
\section{Mathematical properties of half-space problem for the physisorbate layer}\label{sec:math}
%	%	%	%	%
%	%	%	%	%

In this section, we prove some mathematical properties of the half-space
problem \eqref{HS-2} for the physisorbate layer. The structure of the problem differs
from that of the traditional half-space problems of the linearized
Boltzmann equation relevant to Knudsen layers \cite{BCN86,GP89,BGS06}
in the following points:
\begin{itemize}
\item[(a)] the gas molecules are subject to an external attractive-repulsive potential;
\item[(b)] the gas molecules interact only with phonons;
\item[(c)] there are no gas molecules on the surface $\zeta = 0$
because of the infinite potential barrier there.
\end{itemize}

Before getting to the main points, we recall here that the potential $W(\zeta)$
and the relaxation time $\tau(\zeta)$ appearing in this section are, respectively,
assumed to satisfy the dimensionless version of the conditions \eqref{potential}
and (i)--(iv) in Sec.~\ref{subsec:kin-model} and that of the conditions
\eqref{tauph-limit}, (v), and (vi) there.
We note that the property (c), which has been used in deriving
\eqref{cons-c}, is a physical consequence from the assumption \eqref{potential}
for the potential. In this section, we consider the class of solutions to the
problem \eqref{HS-2} satisfying the property (c) (cf. Theorem \ref{thm-exist}
below).

%	%	%	%	%
\subsection{Main results}\label{subsec:main}
%	%	%	%	%

The main results are stated as follows:

\begin{theorem}\label{thm-exist}
Assume that $0 \le F_\infty (c_z) \le (A/\sqrt{2\pi})\exp(-c_z^2/2)$ for some
positive constant $A$. Then, the problem \eqref{HS-2} has the unique
solution satisfying the inequality
$0 \le F(\zeta, c_z) \le C (1/\sqrt{2\pi}) \exp \left( - c_z^2/2 - W(\zeta) \right)$
for some constant $C$. Moreover, the limit $\lim_{\zeta \to +\infty} F(\zeta, c_z)$
exists for $c_z > 0$.
\end{theorem}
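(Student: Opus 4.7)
The plan is to construct the solution via the iteration scheme \eqref{HS-3-it}--\eqref{HS-3-it-init} and prove monotone convergence to a unique fixed point satisfying the stated bounds. Working in the $(\zeta,\varepsilon)$ variables with the explicit representations \eqref{HS-3-it-intg}, and systematically using the elementary identity $\int_a^b\theta(s,b;\varepsilon)\mu(s,\varepsilon)\,\mathrm{d}s=1-\theta(a,b;\varepsilon)$ (immediate from \eqref{diff-theta}), I would first show by induction on $k$ that the iterates $F_\pm^k$ and $n^k$ are non-negative and non-decreasing. This is transparent because the kernel is non-negative and the iteration is linear in $n^{k-1}$; in the trapped regime $W_\text{min}<\varepsilon<0$ the common turning-point value $F_0^k=F_\pm^k(\zeta_a(\varepsilon),\varepsilon)$ is determined by the self-consistent relation $F_0^k\bigl(1-\theta(\zeta_a,\zeta_b;\varepsilon)^2\bigr)=\theta(\zeta_a,\zeta_b;\varepsilon)\,I_+^{k-1}+I_-^{k-1}$ with non-negative sources $I_\pm^{k-1}$ and $\theta(\zeta_a,\zeta_b;\varepsilon)\in(0,1)$ (the square-root singularities of $\mu$ at the turning points are integrable, by smoothness of $W$).

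The core estimate is the inductive upper bound $F_\pm^k(\zeta,\varepsilon)\le A(2\pi)^{-1/2}e^{-\varepsilon}$, which, since $\varepsilon=c_z^2/2+W(\zeta)$, is exactly the bound stated in the theorem with $C=A$. Assuming it for iterate $k-1$, the substitution $u=\varepsilon-W(\zeta)$ in \eqref{HS-3-it-b} yields $n^{k-1}(\zeta)\le A\,e^{-W(\zeta)}$, and hence the source satisfies the $s$-\emph{independent} bound $n^{k-1}(s)\mathcal{M}(s,\varepsilon)\le A(2\pi)^{-1/2}e^{-\varepsilon}$. Plugging this into \eqref{HS-3-it-intg} and using the telescoping identity above, together with the hypothesis $F_\infty(-\sqrt{2\varepsilon})\le A(2\pi)^{-1/2}e^{-\varepsilon}$, propagates the bound to $F_\pm^k$; the algebraic cancellation $(\theta(1-\theta)+(1-\theta))/(1-\theta^2)=1$ accounts for the trapped-orbit turning-point value. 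Monotone convergence then gives pointwise limits $F_\pm$ with the required bound, and dominated convergence in the integral formulas shows that $F_\pm$ solves \eqref{HS-3} and hence the equivalent problem \eqref{HS-2}.

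For uniqueness, two non-negative solutions with the Maxwellian bound induce densities $n^{(1)},n^{(2)}$ whose difference satisfies a linear homogeneous fixed-point equation (obtained by subtracting the integral representations, noting that the $F_\infty$ data cancels, then reintegrating in $\varepsilon$); the same manipulations used for the a priori bound identify this map as a strict contraction on $L^\infty$ with weight $e^{W(\zeta)}$, forcing $n^{(1)}=n^{(2)}$ and therefore $F^{(1)}=F^{(2)}$. Finally, for the limit at infinity with $c_z>0$ (equivalently $\varepsilon>0$), the fixed-point version of \eqref{HS-3-it-intg-a} reads
\begin{align*}
F_+(\zeta,\varepsilon)=\theta(\zeta_a(\varepsilon),\zeta;\varepsilon)\,F_-(\zeta_a(\varepsilon),\varepsilon)+\int_{\zeta_a(\varepsilon)}^{\zeta}\theta(s,\zeta;\varepsilon)\mu(s,\varepsilon)\,n(s)\mathcal{M}(s,\varepsilon)\,\mathrm{d}s,
\end{align*}
and assumption (vi) guarantees $\int_{\zeta_a(\varepsilon)}^\infty\mu(s,\varepsilon)\,\mathrm{d}s<\infty$, so $\theta(\zeta_a,\zeta;\varepsilon)\to\theta(\zeta_a,\infty;\varepsilon)\in(0,1)$ and the integrand is dominated by an integrable function of $s$; letting $\zeta\to\infty$ produces a finite limit. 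The main obstacle is the uniqueness/contraction step: one must recover a factor strictly less than one after reintegrating in $\varepsilon$ over $[W_\text{min},\infty)$, which splits into free and trapped orbit contributions and requires careful handling near $\varepsilon=0$ where $F_\pm$ may be discontinuous.
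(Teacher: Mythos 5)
Your existence argument tracks the paper's proof closely and is correct: the monotone iteration starting from $F_\pm^0=n^0=0$, the inductive positivity, the propagation of the upper bound $F_\pm^k\le A(2\pi)^{-1/2}e^{-\varepsilon}$ through the explicit normalization $\int_0^\infty K_\pm(\zeta,s,\varepsilon)\,\mathrm{d}s\le\bm{1}_{W(\zeta)<\varepsilon}$, monotone convergence to a fixed point of \eqref{HS-3}, and the dominated-convergence argument for $\lim_{\zeta\to+\infty}F_+(\zeta,\varepsilon)$ are all there in the paper's Sec.~3.3. The only substantive departure is your uniqueness step, and that is where the proposal is incomplete.

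You assert that ``the same manipulations used for the a priori bound identify this map as a strict contraction on $L^\infty$ with weight $e^{W(\zeta)}$.'' That is not what those manipulations give. Testing the kernel against $e^{-W(s)}$ (which is the relevant extremal test function for a positive kernel in weighted $L^\infty$) and using the exact identities \eqref{est-intK+}--\eqref{est-intK-}, you only get
\begin{align*}
\int_0^\infty \mathcal{K}(\zeta,s)\,e^{-W(s)}\,\mathrm{d}s
&= e^{-W(\zeta)} - \tau(\zeta)\!\int_0^\infty\!\mu(\zeta,\varepsilon)\,\mathcal{M}(\zeta,\varepsilon)\bigl[\theta(\zeta_a,\zeta)\theta(\zeta_a,\infty)+\theta(\zeta,\infty)\bigr]\bm{1}_{\zeta>\zeta_a(\varepsilon)}\,\mathrm{d}\varepsilon,
\end{align*}
i.e.\ non-expansiveness plus a \emph{loss} term supported on free orbits $\varepsilon>0$ only; trapped orbits $W_\mathrm{min}<\varepsilon<0$ contribute no loss at all. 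Showing the weighted loss is bounded below uniformly in $\zeta$ is a genuine piece of work: you need a quantitative lower bound on $\theta(\zeta_a(\varepsilon),\infty;\varepsilon)$, which is exactly the estimate the paper develops (using convexity of $W$ on the repulsive side and integrability of $1/\tau$) in the proof of Theorem~2, not Theorem~1. So your uniqueness route, if completed, would effectively require importing the technical core of the exponential-convergence theorem. Also, your concern about ``careful handling near $\varepsilon=0$ where $F_\pm$ may be discontinuous'' is misplaced: the jump of $F_\pm$ at $\varepsilon=0$ plays no role in the kernel estimate; the real difficulty is simply that no loss is produced for $\varepsilon<0$.

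The paper's uniqueness proof (Lemma~\ref{lem-unique}) avoids all of this. It is an energy identity: multiply \eqref{HS-2-a} by $F/E$ with $E=e^{-W}\mathcal{M}$, integrate in $(\zeta,c_z)$, use the growth bound to kill all boundary terms, and note that the resulting interior term is $\int_0^\infty \frac{e^{W}}{\tau}\bigl(n^2-\int F^2/\mathcal{M}\bigr)$, which is $\le 0$ by Cauchy--Schwarz yet forced to be $\ge 0$. Equality in Cauchy--Schwarz forces $F=n\,\mathcal{M}$, the ODE then gives $n=Be^{-W}$, and $F_\infty=0$ forces $B=0$. This is self-contained, needs no quantitative bound on $\theta$, and gives uniqueness in exactly the class covered by your a priori bound. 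You should either adopt this argument or, if you want the contraction route, actually carry out the lower bound on the loss term rather than asserting it.
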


\noindent
In fact, the resulting inequality for $F(\zeta, c_z)$ indicates that
$F(\zeta, c_z)$ enjoys the property (c) because
$\lim_{\zeta \to 0^+} \exp \left( - c_z^2/2 - W(\zeta) \right) = 0$.
The existence in Theorem \ref{thm-exist} is proved for the transformed problem
\eqref{HS-3} and \eqref{HS-3-continuity}, rather than the problem \eqref{HS-2},
with the help of the iteration scheme in Sec.~\ref{subsec:iteration}.
It is shown that the sequences $\{n^k\}$ and $\{F_\pm^k\}$, based on
\eqref{HS-3-it} and \eqref{HS-3-it-init}, converge
exponentially fast with respect to the number of iteration $k$.
Let
\begin{align*}
\ell = \int_0^\infty \frac{\text{d}s}{\tau(s)} < +\infty, \qquad
K = \frac{\sqrt{2} \zeta_\text{min}}{\tau(0)} + \frac{\ell}{\sqrt{2}}.
\end{align*}
Then, we have the following:

\begin{theorem}\label{thrm-exp-conv}
Under the same conditions as in Theorem \ref{thm-exist}, i.e.,
$0 \le F_\infty (-\sqrt{2 \varepsilon}) \le (A/\sqrt{2\pi})\exp(-\varepsilon)$,
the sequence $\{ n^k \}$ is nonnegative, nondecreasing in $k$, and converges
exponentially fast to its limit $n\, (\ge 0)$ as $k \to \infty$ in
$L^1 \left( \mathbf{R}^+; \mathrm{d}\zeta/\tau(\zeta) \right)$: 
\begin{align*}
\| n - n^k \|_{L^1 \left( \mathbf{R}^+; \mathrm{d} \zeta/\tau(\zeta) \right)}
= \int_0^\infty \vert n(\zeta) - n^k(\zeta) \vert \frac{\mathrm{d} \zeta}{\tau(\zeta)}
< A \ell e^{\vert W_{\mathrm{min}} \vert} \mathcal{L}^k,
\end{align*}
where
\begin{align*}
0 \le \mathcal{L} \le
1 - \frac{e^{-K}}{\sqrt{\pi}} \int_{\vert W_{\mathrm{min}} \vert + 1}^\infty
\frac{e^{-u}}{\sqrt{u}} \mathrm{d} u < 1.
\end{align*}
In addition, the sequence $\{ F_\pm^k \}$
is nonnegative, nondecreasing in $k$, and converges exponentially
fast to their limits $F_\pm\, (\ge 0)$ as $k \to \infty$ in
$
L^1 \left( \mathbf{R}^+ \times \mathbf{R}; \bm{1}_{\varepsilon > W(\zeta)}
\mathrm{d}\varepsilon \mathrm{d}\zeta 
/\tau(\zeta) \sqrt{2 [\varepsilon - W(\zeta)]} \right)
$:
\begin{align*}
& \| F_+ - F_+^k \|_{L^1 \left( \mathbf{R}^+ \times \mathbf{R}; \bm{1}_{\varepsilon > W(\zeta)}
\mathrm{d}\varepsilon \mathrm{d}\zeta
/\tau(\zeta) \sqrt{2 [\varepsilon - W(\zeta)]} \right) }
\\
& \qquad + \| F_- - F_-^k \|_{L^1 \left( \mathbf{R}^+ \times \mathbf{R}; \bm{1}_{\varepsilon > W(\zeta)}
\mathrm{d}\varepsilon \mathrm{d}\zeta
/\tau(\zeta) \sqrt{2 [\varepsilon - W(\zeta)]} \right)}
\\
& \quad = \int_0^\infty\!\! \int_{-\infty}^\infty
\big[ \vert F_+(\zeta,\varepsilon) - F_+^k(\zeta,\varepsilon) \vert
+ \vert F_-(\zeta,\varepsilon) - F_-^k(\zeta,\varepsilon) \vert \big]
%\\
%& \qquad \qquad \qquad \qquad \qquad \qquad \times
\frac{\bm{1}_{\varepsilon > W(\zeta)} \mathrm{d}\varepsilon \mathrm{d}\zeta}
{\tau(\zeta) \sqrt{2 [\varepsilon - W(\zeta)]}}
\\
& \quad 
< A \ell e^{\vert W_{\mathrm{min}} \vert} \mathcal{L}^k.
\end{align*}
\end{theorem}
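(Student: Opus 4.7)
The plan is to derive an explicit linear recursion for the increments $\delta n^k := n^k - n^{k-1}$ and to prove that the associated integral operator is a strict contraction on $L^1\!\left(\mathbf{R}^+;\mathrm{d}\zeta/\tau(\zeta)\right)$. First I would establish non-negativity and monotonicity of the iterates by induction from $F_\pm^0 = n^0 = 0$: for escaping orbits ($\varepsilon > 0$) the formulas \eqref{HS-3-it-intg} express $F_\pm^k$ as a non-negative linear functional of $F_\infty$ and $n^{k-1}$; for trapped orbits ($W_{\mathrm{min}} < \varepsilon < 0$) the boundary values $F_-^k(\zeta_a,\varepsilon)$ and $F_+^k(\zeta_b,\varepsilon)$ are coupled by the continuity conditions \eqref{HS-3-it-d}--\eqref{HS-3-it-e} into a closed $2\times 2$ linear system, whose explicit solution involves the positive Neumann factor $[1-\theta(\zeta_a,\zeta_b;\varepsilon)^2]^{-1}$. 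Because $F_\infty \geq 0$ and every coefficient is positive, induction yields $F_\pm^k \geq F_\pm^{k-1} \geq 0$, and hence $n^k \geq n^{k-1} \geq 0$.

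For $k \geq 2$, the increments $\delta F_\pm^k := F_\pm^k - F_\pm^{k-1}$ satisfy the iteration \eqref{HS-3-it} with $F_\infty$ replaced by $0$ and source density $\delta n^{k-1}$. Substituting \eqref{HS-3-it-intg} into \eqref{HS-3-it-b} and applying Fubini rewrites this as a scalar recursion $\delta n^k(\zeta) = \int_0^\infty \mathcal{K}(\zeta,s)\,\delta n^{k-1}(s)\,\mathrm{d}s/\tau(s)$ for a non-negative kernel $\mathcal{K}$ assembled from $\theta$, $\mathcal{M}$, and the trapped-orbit Neumann factor above. Integrating against $\mathrm{d}\zeta/\tau(\zeta)$ and swapping the order of integration once more gives $\|\delta n^k\|_{L^1(\mathrm{d}\zeta/\tau)} \leq \mathcal{L}\,\|\delta n^{k-1}\|_{L^1(\mathrm{d}\zeta/\tau)}$ with $\mathcal{L} := \sup_{s>0} \int_0^\infty \mathcal{K}(\zeta,s)\,\mathrm{d}\zeta/\tau(\zeta)$.

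The main technical obstacle is proving $\mathcal{L} < 1$ with the quantitative gap claimed. The $\varepsilon$-variable analogue of \eqref{cons-c}, obtained by integrating \eqref{HS-3-it-a} against $\mathrm{d}\varepsilon/\sqrt{2[\varepsilon-W(\zeta)]}$ over $\varepsilon > W(\zeta)$, forces $\mathcal{L} \leq 1$; the strict gap comes from high-energy escaping molecules ($\varepsilon \geq |W_{\mathrm{min}}|+1$), whose outgoing characteristics carry mass to infinity and never feed back into $n^k$. Quantitatively one would bound $\theta(a, b; \varepsilon) \geq e^{-K}$ uniformly in $\varepsilon \geq |W_{\mathrm{min}}|+1$ and $a, b$ in the layer by splitting $\int_a^b \mu(s,\varepsilon)\,\mathrm{d}s$ over the repulsive zone $[0,\zeta_{\mathrm{min}}]$---where $\tau(s) \geq \tau(0)$ by (v) and the integrable singularity of $1/\sqrt{\varepsilon-W}$ at a turning point is controlled via the convexity (iv) of $W$, producing the $\sqrt{2}\zeta_{\mathrm{min}}/\tau(0)$ contribution to $K$---and over the attractive tail $[\zeta_{\mathrm{min}},\infty)$, where $\varepsilon - W(s) \geq \varepsilon \geq 1$ combined with integrability (vi) gives $\ell/\sqrt{2}$. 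The Maxwellian change of variable $u = \varepsilon$ then isolates the explicit mass defect $\pi^{-1/2} \int_{|W_{\mathrm{min}}|+1}^\infty e^{-u}/\sqrt{u}\,\mathrm{d}u$.

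For the base estimate, $\|\delta n^1\|_{L^1(\mathrm{d}\zeta/\tau)} = \|n^1\|_{L^1(\mathrm{d}\zeta/\tau)}$ is computed from \eqref{HS-3-it-intg} with $n^0 = 0$; the hypothesis $F_\infty(c_z) \leq A(2\pi)^{-1/2}e^{-c_z^2/2}$, together with $\theta \leq 1$ and $\mathcal{M}(\zeta,\varepsilon) \leq (2\pi)^{-1/2}e^{-\varepsilon+|W_{\mathrm{min}}|}$, yields the upper bound $A\ell e^{|W_{\mathrm{min}}|}$. Iterating the contraction then gives $\|\delta n^{k+1}\|_{L^1(\mathrm{d}\zeta/\tau)} \leq A\ell e^{|W_{\mathrm{min}}|}\mathcal{L}^k$, which telescopes to the stated bound on $\|n - n^k\|_{L^1(\mathrm{d}\zeta/\tau)}$. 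Finally, substituting this exponential rate back into \eqref{HS-3-it-intg} and applying Tonelli in the weighted space transfers the same rate to $F_\pm^k \to F_\pm$, completing the argument.
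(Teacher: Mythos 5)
Your outline tracks the paper's proof closely: it recovers the same scalar integral equation for $n^k$, the same Fubini-based $L^1$ contraction with sup-in-$s$ kernel bound, the same decomposition of $\int_{\zeta_a(\varepsilon)}^\infty\mu(t,\varepsilon)\,\mathrm{d}t$ into a repulsive part controlled via convexity of $W$ (yielding $\sqrt{2}\zeta_\text{min}/\tau(0)$) and an attractive tail controlled via $\tau^{-1}\in L^1$ (yielding $\ell/\sqrt{2}$), and the same identification of the contraction gap with the escape probability $\theta(\zeta_a(\varepsilon),\infty;\varepsilon)$ for high-energy ($\varepsilon\ge|W_\text{min}|+1$) particles. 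The transfer to $F_\pm^k$ by integrating in $\varepsilon$ and using the sign of the differences is also the route the paper takes.

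There is, however, a genuine gap in your telescoping step, and it affects the constant claimed in the theorem. You propose to deduce $\|\delta n^{j}\|\le A\ell e^{|W_\text{min}|}\mathcal{L}^{j-1}$ and then obtain $\|n-n^k\|$ by summing the tail. But since $n-n^k=\sum_{j>k}\delta n^j$ with all terms nonnegative,
\[
\int_0^\infty\bigl[n(\zeta)-n^k(\zeta)\bigr]\frac{\mathrm{d}\zeta}{\tau(\zeta)}
=\sum_{j=k+1}^{\infty}\|\delta n^j\|
\le A\ell e^{|W_\text{min}|}\sum_{j=k}^{\infty}\mathcal{L}^{j}
= \frac{A\ell e^{|W_\text{min}|}\,\mathcal{L}^{k}}{1-\mathcal{L}},
\]
which carries an extra factor $(1-\mathcal{L})^{-1}>1$. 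That does not reproduce the bound $A\ell e^{|W_\text{min}|}\mathcal{L}^k$ stated in the theorem. The paper avoids this by never summing the increment series: it passes to the limit $k\to\infty$ in the integral equation so that $n$ itself satisfies $n=\int\mathcal{K}\,n\,\mathrm{d}s+N$, subtracts to get $n-n^k=\int\mathcal{K}\,[n-n^{k-1}]\,\mathrm{d}s$, and iterates this \emph{tail} identity $k$ times to obtain $\|n-n^k\|\le\mathcal{L}^k\|n-n^0\|=\mathcal{L}^k\|n\|$. The crucial final ingredient is the pointwise bound $n(\zeta)\le Ae^{-W(\zeta)}$ from Lemma~\ref{lem-monoton}, which gives $\|n\|_{L^1(\mathrm{d}\zeta/\tau)}\le A\int_0^\infty e^{-W(\zeta)}\,\mathrm{d}\zeta/\tau(\zeta)\le A\ell e^{|W_\text{min}|}$ directly. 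You should replace your geometric-series telescoping with this tail-iteration argument (or equivalently, keep the contraction but take $\|n\|$ as the base constant rather than $\|\delta n^1\|$) to obtain the stated constant.
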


%	%	%	%	%
\subsection{Proof of uniqueness}\label{subsec:uniqueness}
%	%	%	%	%

Here, we prove the uniqueness in Theorem \ref{thm-exist}.

\begin{lemma}\label{lem-unique}
If the problem \eqref{HS-2} has a solution $F(\zeta, c_z)$ satisfying
$\vert F(\zeta, c_z) \vert \le C e^{-W(\zeta)} \mathcal{M}(c_z)$ for some
constant $C$, then the solution is unique.
\end{lemma}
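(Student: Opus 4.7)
The plan is to argue uniqueness via a weighted $L^2$-energy identity in the $c_z$ variable, in the spirit of classical half-space analyses for the linearized Boltzmann equation. Let $G:=F_1-F_2$; by linearity of \eqref{HS-2-a}-\eqref{HS-2-b} in $F$, the difference $G$ solves the homogeneous problem with $G\to 0$ for $c_z<0$ as $\zeta\to\infty$, and satisfies $|G(\zeta,c_z)|\le 2Ce^{-W(\zeta)}\mathcal{M}(c_z)$.

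First, I would multiply the equation for $G$ by $G(\zeta,c_z)/\mathcal{M}(c_z)$ and integrate over $c_z\in\mathbf{R}$. The transport part becomes $c_zG\partial_\zeta G/\mathcal{M}=c_z\partial_\zeta(G^2/2\mathcal{M})$; for the potential term, using $\partial_{c_z}\mathcal{M}=-c_z\mathcal{M}$ and one integration by parts in $c_z$ (the boundary terms at $\pm\infty$ vanish by the a priori bound) gives $-W'\int_\mathbf{R} G\partial_{c_z}G/\mathcal{M}\,\text{d}c_z=W'(\zeta)\mathcal{E}(\zeta)$, where $\mathcal{E}(\zeta):=\int_\mathbf{R} c_z G(\zeta,c_z)^2/(2\mathcal{M}(c_z))\,\text{d}c_z$. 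Combining the two sides yields the identity
\begin{equation*}
e^{-W(\zeta)}\frac{\text{d}}{\text{d}\zeta}\!\left[e^{W(\zeta)}\mathcal{E}(\zeta)\right]
=\frac{1}{\tau(\zeta)}\!\left[n_G(\zeta)^2-\int_\mathbf{R}\!\frac{G(\zeta,c_z)^2}{\mathcal{M}(c_z)}\,\text{d}c_z\right]\le 0,
\end{equation*}
by the Cauchy-Schwarz inequality (using $\int_\mathbf{R}\mathcal{M}\,\text{d}c_z=1$), with equality iff $G(\zeta,\cdot)$ is proportional to $\mathcal{M}$. Hence $\zeta\mapsto e^{W(\zeta)}\mathcal{E}(\zeta)$ is nonincreasing on $(0,\infty)$.

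Second, I would identify the two limits of $e^W\mathcal{E}$. The a priori bound yields $|\mathcal{E}(\zeta)|\le 2C^2 e^{-2W(\zeta)}\sqrt{2/\pi}$, so $e^W\mathcal{E}\to 0$ as $\zeta\to 0^+$, where $W\to+\infty$. As $\zeta\to\infty$, dominated convergence on $\{c_z<0\}$ (using $G\to 0$ there and $|c_z|G^2/\mathcal{M}\le 4C^2|c_z|\mathcal{M}$) shows that the negative-$c_z$ contribution to $\mathcal{E}$ vanishes, while the positive-$c_z$ contribution is nonnegative; since $W\to 0$, this gives $\liminf_{\zeta\to\infty}e^{W(\zeta)}\mathcal{E}(\zeta)\ge 0$. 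Combined with the monotonicity and the vanishing at $\zeta\to 0^+$, the limit at infinity must equal zero and the right-hand side of the displayed identity must vanish for a.e.\ $\zeta$.

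Finally, saturation of the Cauchy-Schwarz inequality gives $G(\zeta,c_z)=n_G(\zeta)\mathcal{M}(c_z)$. Inserting this ansatz into \eqref{HS-2-a} cancels the collision term and leaves $c_z\mathcal{M}(c_z)[n_G'(\zeta)+W'(\zeta)n_G(\zeta)]=0$, so $n_G(\zeta)=n_G(\infty)e^{-W(\zeta)}$. The boundary condition $G\to 0$ for $c_z<0$ as $\zeta\to\infty$, combined with $W(\infty)=0$, forces $n_G(\infty)=0$, hence $G\equiv 0$ and $F_1=F_2$. The most delicate step will be the passage to the limit $\zeta\to\infty$ in $\mathcal{E}$: one must exploit the a priori bound on $G$ and the half-space boundary condition on $\{c_z<0\}$ to ensure $\liminf_{\zeta\to\infty}\mathcal{E}(\zeta)\ge 0$, since the positive-$c_z$ contribution has no a priori limit.
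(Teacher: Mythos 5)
Your proof is correct and is, up to cosmetic rearrangement, the same argument as in the paper: both multiply by $G/\mathcal{M}$ (the paper equivalently uses the weight $e^{W}/\mathcal{M} = 1/E$), arrive at the same identity $\frac{\mathrm{d}}{\mathrm{d}\zeta}[e^{W(\zeta)}\mathcal{E}(\zeta)] = \frac{e^{W(\zeta)}}{\tau(\zeta)}\bigl(n_G(\zeta)^2 - \int G^2/\mathcal{M}\,\mathrm{d}c_z\bigr) \le 0$, and squeeze $e^{W}\mathcal{E}$ to zero using the vanishing limit at $\zeta\to 0^+$ and the nonnegativity of the limiting outgoing contribution at $\zeta\to\infty$, before invoking saturation of Cauchy--Schwarz. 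The only difference is that you present it as a monotonicity argument for $e^{W}\mathcal{E}$ while the paper integrates the divergence identity over $(0,\infty)\times\mathbf{R}$ and evaluates the resulting boundary terms directly, which is the same computation.
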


\begin{proof}
Let us denote $\partial_\zeta = \partial/\partial \zeta$,
$\partial_{c_z} = \partial/\partial c_z$, $G'(\zeta) = \text{d} G(\zeta)/\text{d} \zeta$,
where $G(\zeta)$ is an arbitrary function of $\zeta$, and
\[
E(\zeta, c_z) = e^{-W(\zeta)} \mathcal{M}(c_z).
\]
This is indeed a modified Maxwellian that is a natural equilibrium solution
to the layer equation \eqref{HS-2-a}.
By linearity of the problem \eqref{HS-2}, it suffices to consider the problem
with $F_\infty(c_z)=0$ and to prove that the only possible solution satisfying the condition
$\vert F(\zeta, c_z) \vert \le C E(\zeta, c_z)$ is $F(\zeta, c_z)=0$.

\indent
If we multiply both sides of \eqref{HS-2-a} by $F(\zeta, c_z)/E(\zeta, c_z)$ and
take account of the relations $\partial_\zeta [1/E(\zeta, c_z)]=W'(\zeta)/E(\zeta, c_z)$
and $\partial_{c_z} [1/E(\zeta, c_z)]=c_z/E(\zeta, c_z)$, we have
\begin{align}
& \partial_\zeta \left( \frac{1}{2} c_z \frac{F(\zeta, c_z)^2}{E(\zeta, c_z)} \right)
- \partial_{c_z} \left( \frac{1}{2} W'(\zeta) \frac{F(\zeta, c_z)^2}{E(\zeta, c_z)} \right)
\nonumber \\
& \qquad \qquad \qquad 
= \frac{e^{W(\zeta)}}{\tau(\zeta)} \left( n(\zeta) F(\zeta, c_z)
- \frac{F(\zeta, c_z)^2}{\mathcal{M}(c_z)} \right).
\end{align}
Integrating this equation with respect to $\zeta$ and $c_z$ over $(0, +\infty)$
and $(-\infty, +\infty)$, respectively, leads to
\begin{align}\label{HS-2-a-integ}
& \left[ \int_{-\infty}^{\infty} \frac{1}{2} c_z \frac{F(\zeta, c_z)^2}{E(\zeta, c_z)} \text{d}c_z
\right]_{\zeta=0}^{\zeta=+\infty}
- \left[ \int_0^{\infty} \frac{1}{2} W'(\zeta) \frac{F(\zeta, c_z)^2}{E(\zeta, c_z)} \text{d}\zeta
\right]_{c_z=-\infty}^{c_z=+\infty}
\nonumber \\
& \qquad
= \int_0^\infty \frac{e^{W(\zeta)}}{\tau(\zeta)} \left( n(\zeta)^2
- \int_{-\infty}^\infty \frac{F(\zeta, c_z)^2}{\mathcal{M}(c_z)} \text{d} c_z
\right) \text{d}\zeta.
\end{align}
Because $0 \le F(\zeta, c_z)^2/E(\zeta, c_z) < C^2 e^{-W(\zeta)} \mathcal{M}(c_z)$,
we find
\begin{align*}
\left\vert \int_{-\infty}^{\infty} \frac{1}{2} c_z \frac{F(\zeta, c_z)^2}{E(\zeta, c_z)} \text{d}c_z
\right\vert \le C^2 e^{-W(\zeta)} \int_{-\infty}^\infty \frac{1}{2} \vert c_z \vert
\mathcal{M}(c_z) \text{d}c_z \to 0,
\end{align*}
as $\zeta \to 0^+$ and
\begin{align*}
\left\vert \int_0^{\infty} \frac{1}{2} W'(\zeta) \frac{F(\zeta, c_z)^2}{E(\zeta, c_z)} \text{d}\zeta
\right\vert \le C^2 \mathcal{M}(c_z) \int_0^\infty \frac{1}{2} \vert W'(\zeta) \vert e^{-W(\zeta)} \text{d}\zeta
\to 0,
\end{align*}
as $\vert c_z \vert \to +\infty$. In addition, since $F_\infty (c_z)=0$,
it follows that
\begin{align*}
\lim_{\zeta \to +\infty}
\int_{-\infty}^{\infty} \frac{1}{2} c_z \frac{F(\zeta, c_z)^2}{E(\zeta, c_z)} \text{d}c_z
= \lim_{\zeta \to +\infty} 
\int_0^{\infty} \frac{1}{2} c_z \frac{F(\zeta, c_z)^2}{E(\zeta, c_z)} \text{d}c_z \ge 0.
\end{align*}
Therefore, with the help of \eqref{HS-2-b} and $\int_{-\infty}^\infty \mathcal{M}(c_z) \text{d} c_z =1$,
\eqref{HS-2-a-integ} gives
\begin{align*}
& \mathcal{I} := \int_0^\infty \frac{e^{W(\zeta)}}{\tau(\zeta)} \left[ 
\Big( \int_{-\infty}^\infty F(\zeta, c_z) \text{d}c_z \Big)^2 \right.
\nonumber \\
& \qquad \qquad \qquad \quad \left.
- \int_{-\infty}^\infty \mathcal{M}(c_z) \text{d} c_z
\int_{-\infty}^\infty \frac{F(\zeta, c_z)^2}{\mathcal{M}(c_z)} \text{d} c_z
\right] \text{d}\zeta \ge 0.
\end{align*}
On the other hand, it follows from the Cauchy--Schwarz inequality that
\begin{align*}
\mathcal{I} \le 0.
\end{align*}
This means that we are in the equality case in the Cauchy--Schwarz inequality
for a.e.~$\zeta > 0$, so that $F(\zeta, \cdot)$ must be proportional to
$\mathcal{M}$:
\begin{align*}
F(\zeta, c_z) = c(\zeta) \mathcal{M}(c_z), \;\;\;\;
\text{so that} \;\;\; c(\zeta) = n(\zeta) = \int_{-\infty}^\infty F(\zeta, c_z) \text{d}c_z.
\end{align*}
The substitution of this form into \eqref{HS-2-a} leads to
\begin{align*}
c_z \mathcal{M}(c_z) [n'(\zeta) + W'(\zeta) n(\zeta)] = 0,
\end{align*}
which gives
\begin{align*}
n(\zeta) = B e^{-W(\zeta)},
\end{align*}
for some constant $B$. Therefore, letting $\zeta \to +\infty$, one finds that
\begin{align*}
F(\zeta, c_z) = B e^{-W(\zeta)} \mathcal{M}(c_z) \to B \mathcal{M}(c_z),
\;\;\; \text{as}\;\; \zeta \to +\infty,
\end{align*}
since $W(\zeta) \to 0$ as $\zeta \to +\infty$.  Then, the boundary condition
\eqref{HS-2-d} with $F_\infty (c_z)=0$ implies that $B=0$, so that we have
$F(\zeta, c_z) = 0$.
\end{proof}

%	%	%	%	%
\subsection{Proof of existence}\label{subsec:existence}
%	%	%	%	%

In this section, we prove the existence of the solution and its limit as
$\zeta \to \infty$ in Theorem \ref{thm-exist}. The existence proof is based on the
approximating sequence $\{ F_\pm^k \}$ constructed by
\eqref{HS-3-it-intg} with \eqref{HS-3-it-init}.

\subsubsection{Computing $F^k_\pm(\zeta_a(\varepsilon), \varepsilon)$ and
$F^k_\pm(\zeta_b(\varepsilon), \varepsilon)$}

In the case where $\varepsilon > 0$, one obtains from \eqref{HS-3-it-intg-b}
and \eqref{HS-3-it-d}
\begin{align}\label{Fzataa-eps>0}
& F^k_\pm(\zeta_a(\varepsilon),\varepsilon)
\nonumber \\
& \;\;\; = 
\theta (\zeta_a(\varepsilon),\infty;\varepsilon) F_\infty (-\sqrt{2\varepsilon})
+\int_{\zeta_a(\varepsilon)}^\infty \theta(\zeta_a(\varepsilon),s;\varepsilon)
\frac{n^{k-1}(s) \mathcal{M}(s,\varepsilon) \text{d}s}{\tau(s)\sqrt{2[\varepsilon-W(s)]}}
\nonumber \\
& \;\;\;
= \theta(\zeta_a(\varepsilon),\infty;\varepsilon) F_\infty (-\sqrt{2\varepsilon} )
+\int_{\zeta_a(\varepsilon)}^\infty \theta(\zeta_a(\varepsilon),s;\varepsilon)\mu(s,\varepsilon)
n^{k-1}(s)\mathcal{M}(s,\varepsilon) \text{d}s
\nonumber \\
& \;\;\;
= \theta(\zeta_a(\varepsilon),\infty;\varepsilon) F_\infty\left(-\sqrt{2\varepsilon}\right)
- \int_{\zeta_a(\varepsilon)}^\infty \partial_s\theta(\zeta_a(\varepsilon),s;\varepsilon)
n^{k-1}(s)\mathcal{M}(s,\varepsilon)\text{d}s.
\end{align}
On the other hand, if $W_{\text{min}}<\varepsilon<0$, we first use \eqref{HS-3-it-intg-b}
to compute
\begin{align*}
F^k_-(\zeta_a(\varepsilon),\varepsilon)
& = \theta(\zeta_a(\varepsilon),\zeta_b(\varepsilon);\varepsilon)
\Bigg(F^k_+(\zeta_b(\varepsilon),\varepsilon)
\nonumber \\
& \qquad \quad
+\int_{\zeta_a(\varepsilon)}^{\zeta_b(\varepsilon)}\theta(\zeta_b(\varepsilon),s;\varepsilon)
\frac{n^{k-1}(s)\mathcal{M}(s,\varepsilon)\text{d}s}{\tau(s)\sqrt{2[\varepsilon-W(s)]}}\Bigg),
\end{align*}
and then inject in the r.h.s. the following expression of $F^k_+(\zeta_b(\varepsilon),\varepsilon)$,
obtained from \eqref{HS-3-it-intg-a}:
\begin{align*}
F_+^k(\zeta_b(\varepsilon),\varepsilon)
& =\theta(\zeta_a(\varepsilon),\zeta_b(\varepsilon);\varepsilon)
\Bigg(F_-^k(\zeta_a(\varepsilon),\varepsilon)
\nonumber \\
& \qquad \quad
+\int_{\zeta_a(\varepsilon)}^{\zeta_b(\varepsilon)}\theta(s,\zeta_a(\varepsilon);\varepsilon)
\frac{n^{k-1}(s)\mathcal{M}(s,\varepsilon)\text{d}s}{\tau(s)\sqrt{2[\varepsilon-W(s)]}}\Bigg).
\end{align*}
This leads us to the equality
\begin{align}\label{F-zetaa}
& F^k_\pm(\zeta_a(\varepsilon),\varepsilon) [1-\theta(\zeta_a(\varepsilon),\zeta_b(\varepsilon);\varepsilon)^2]
\nonumber \\
& \qquad
=\theta(\zeta_a(\varepsilon),\zeta_b(\varepsilon);\varepsilon)
\int_{\zeta_a(\varepsilon)}^{\zeta_b(\varepsilon)}\theta(\zeta_b(\varepsilon),s;\varepsilon)
\mu(s, \varepsilon) n^{k-1}(s)\mathcal{M}(s,\varepsilon)\text{d}s
\nonumber \\
& \qquad \;\;
+\theta(\zeta_a(\varepsilon),\zeta_b(\varepsilon);\varepsilon)^2
\int_{\zeta_a(\varepsilon)}^{\zeta_b(\varepsilon)}\theta(s,\zeta_a(\varepsilon);\varepsilon)
\mu(s, \varepsilon) n^{k-1}(s)\mathcal{M}(s,\varepsilon)\text{d}s,
\end{align}
with the help of \eqref{HS-3-it-d} and \eqref{def-mu}. Likewise
\begin{align}\label{F-zetab}
& F^k_\pm(\zeta_b(\varepsilon),\varepsilon) [1-\theta(\zeta_a(\varepsilon),\zeta_b(\varepsilon);\varepsilon)^2]
\nonumber \\
& \qquad
=\theta(\zeta_a(\varepsilon),\zeta_b(\varepsilon);\varepsilon)
\int_{\zeta_a(\varepsilon)}^{\zeta_b(\varepsilon)}\theta(s,\zeta_a(\varepsilon);\varepsilon)
\mu(s, \varepsilon) n^{k-1}(s)\mathcal{M}(s,\varepsilon)\text{d}s
\nonumber \\
& \qquad \;\;
+\theta(\zeta_a(\varepsilon),\zeta_b(\varepsilon);\varepsilon)^2
\int_{\zeta_a(\varepsilon)}^{\zeta_b(\varepsilon)}\theta(\zeta_b(\varepsilon),s;\varepsilon)
\mu(s, \varepsilon) n^{k-1}(s)\mathcal{M}(s,\varepsilon)\text{d}s,
\end{align}
with the help of \eqref{HS-3-it-e} and \eqref{def-mu}.

Equation \eqref{F-zetaa} can be recast as follows: for $W_\text{min} < \varepsilon < 0$,
\begin{align*}
& F^k_\pm(\zeta_a(\varepsilon),\varepsilon)\, \theta(\zeta_a(\varepsilon),\zeta_b(\varepsilon);\varepsilon)\,
[ \theta(\zeta_b(\varepsilon),\zeta_a(\varepsilon);\varepsilon)-\theta(\zeta_a(\varepsilon),\zeta_b(\varepsilon);\varepsilon) ]
\nonumber \\
& \;\; =\theta(\zeta_a(\varepsilon),\zeta_b(\varepsilon);\varepsilon)
\int_{\zeta_a(\varepsilon)}^{\zeta_b(\varepsilon)} [-\partial_s\theta(\zeta_b(\varepsilon),s;\varepsilon)
\nonumber \\
& \qquad \qquad \qquad \qquad \qquad \qquad
+\partial_s\theta(s,\zeta_b(\varepsilon);\varepsilon)] n^{k-1}(s)\mathcal{M}(s,\varepsilon)\text{d}s,
\end{align*}
or, equivalently,
\begin{align}\label{Fzetaa-final}
F^k_\pm(\zeta_a(\varepsilon),\varepsilon)
= \int_{\zeta_a(\varepsilon)}^{\zeta_b(\varepsilon)} \tfrac{\partial_s\theta(s,\zeta_b(\varepsilon);\varepsilon)-\partial_s\theta(\zeta_b(\varepsilon),s;\varepsilon)}
{\theta(\zeta_b(\varepsilon),\zeta_a(\varepsilon);\varepsilon)-\theta(\zeta_a(\varepsilon),\zeta_b(\varepsilon);\varepsilon)}
n^{k-1}(s)\mathcal{M}(s,\varepsilon)\text{d}s.
\end{align}
Similarly, \eqref{F-zetab} leads to the following expression: for  $W_\text{min} < \varepsilon < 0$,
\begin{align}\label{Fzetab-final}
F^k_\pm(\zeta_b(\varepsilon),\varepsilon)
= \int_{\zeta_a(\varepsilon)}^{\zeta_b(\varepsilon)} \tfrac{\partial_s\theta(s,\zeta_a(\varepsilon);\varepsilon)-\partial_s\theta(\zeta_a(\varepsilon),s;\varepsilon)}
{\theta(\zeta_b(\varepsilon),\zeta_a(\varepsilon);\varepsilon)-\theta(\zeta_a(\varepsilon),\zeta_b(\varepsilon);\varepsilon)}
n^{k-1}(s)\mathcal{M}(s,\varepsilon)\text{d}s.
\end{align}
One easily checks that
\begin{align*}
& \tfrac{\partial_s\theta(s,\zeta_b(\varepsilon);\varepsilon)-\partial_s\theta(\zeta_b(\varepsilon),s;\varepsilon)}
{\theta(\zeta_b(\varepsilon),\zeta_a(\varepsilon);\varepsilon)-\theta(\zeta_a(\varepsilon),\zeta_b(\varepsilon);\varepsilon)}\ge 0,
\quad
\tfrac{\partial_s\theta(s,\zeta_a(\varepsilon);\varepsilon)-\partial_s\theta(\zeta_a(\varepsilon),s;\varepsilon)}
{\theta(\zeta_b(\varepsilon),\zeta_a(\varepsilon);\varepsilon)-\theta(\zeta_a(\varepsilon),\zeta_b(\varepsilon);\varepsilon)}\ge 0,
\end{align*}
and
\begin{align*}
& \int_{\zeta_a(\varepsilon)}^{\zeta_b(\varepsilon)}
\tfrac{\partial_s\theta(s,\zeta_b(\varepsilon);\varepsilon)-\partial_s\theta(\zeta_b(\varepsilon),s;\varepsilon)}
{\theta(\zeta_b(\varepsilon),\zeta_a(\varepsilon);\varepsilon)-\theta(\zeta_a(\varepsilon),\zeta_b(\varepsilon);\varepsilon)}\text{d}s=1,
\\
& \int_{\zeta_a(\varepsilon)}^{\zeta_b(\varepsilon)}
\tfrac{\partial_s\theta(s,\zeta_a(\varepsilon);\varepsilon)-\partial_s\theta(\zeta_a(\varepsilon),s;\varepsilon)}
{\theta(\zeta_b(\varepsilon),\zeta_a(\varepsilon);\varepsilon)-\theta(\zeta_a(\varepsilon),\zeta_b(\varepsilon);\varepsilon)}\text{d}s=1.
\end{align*}
Therefore $F^k_\pm(\zeta_a(\varepsilon),\varepsilon)$ and $F^k_\pm(\zeta_b(\varepsilon),\varepsilon)$
are averages of $n^{k-1}(s)\mathcal{M}(s,\varepsilon)$ for $s\in[\zeta_a(\varepsilon),\zeta_b(\varepsilon)]$.

\subsubsection{Simplifying the formulas \eqref{HS-3-it-intg}}

Next we plug \eqref{Fzataa-eps>0}, \eqref{Fzetaa-final}, and \eqref{Fzetab-final}
in \eqref{HS-3-it-intg}.
Then, \eqref{HS-3-it-intg-a} leads to
\begin{align*}
F_+^k(\zeta,\varepsilon) &= \theta(\zeta_a(\varepsilon),\zeta;\varepsilon)
F_-^k(\zeta_a(\varepsilon),\varepsilon)+\int_{\zeta_a(\varepsilon)}^\zeta
\theta(s,\zeta;\varepsilon)\mu(s,\varepsilon)n^{k-1}(s)\mathcal{M}(s,\varepsilon)\text{d}s
\\
& = \theta(\zeta_a(\varepsilon),\zeta;\varepsilon)
F_-^k(\zeta_a(\varepsilon),\varepsilon)+\int_{\zeta_a(\varepsilon)}^\zeta
\partial_s\theta(s,\zeta;\varepsilon)n^{k-1}(s)\mathcal{M}(s,\varepsilon)\text{d}s
\\
& = \bm{1}_{\varepsilon > 0}\, \theta(\zeta_a(\varepsilon),\zeta;\varepsilon)\,
\theta(\zeta_a(\varepsilon),\infty;\varepsilon) F_\infty (-\sqrt{2\varepsilon} )
\\
& \quad -\bm{1}_{\varepsilon > 0}\, \theta(\zeta_a(\varepsilon),\zeta;\varepsilon)
\int_{\zeta_a(\varepsilon)}^\infty \partial_s\theta(\zeta_a(\varepsilon),s;\varepsilon)
n^{k-1}(s)\mathcal{M}(s,\varepsilon)\text{d}s
\\
& \quad +\bm{1}_{W_\text{min}<\varepsilon<0}\, \theta(\zeta_a(\varepsilon),\zeta;\varepsilon)
\\
& \qquad \qquad \times
\int_{\zeta_a(\varepsilon)}^{\zeta_b(\varepsilon)}
\tfrac{\partial_s\theta(s,\zeta_b(\varepsilon);\varepsilon)-\partial_s\theta(\zeta_b(\varepsilon),s;\varepsilon)}
{\theta(\zeta_b(\varepsilon),\zeta_a(\varepsilon);\varepsilon)-\theta(\zeta_a(\varepsilon),\zeta_b(\varepsilon);\varepsilon)}
n^{k-1}(s)\mathcal{M}(s,\varepsilon)\text{d}s
\\
& \quad +\bm{1}_{W(\zeta)<\varepsilon}\int_{\zeta_a(\varepsilon)}^\zeta
\partial_s\theta(s,\zeta;\varepsilon)n^{k-1}(s)\mathcal{M}(s,\varepsilon)\text{d}s,
\end{align*}
while \eqref{HS-3-it-intg-b} is recast as
\begin{align*}
& F^k_-(\zeta,\varepsilon)
\\
& \;\; =\bm{1}_{\varepsilon > 0}\, \theta(\zeta,\infty;\varepsilon) \Bigg( F_\infty (-\sqrt{2\varepsilon} )
+\int_\zeta^\infty\theta(\infty,s;\varepsilon)\tfrac{n^{k-1}(s)\mathcal{M}(s,\varepsilon)\text{d}s}
{\tau(s)\sqrt{2[\varepsilon-W(s)]}}\Bigg)
\\
& \;\;\quad +\bm{1}_{W_\text{min}<\varepsilon<0}\, \theta(\zeta,\zeta_b(\varepsilon);\varepsilon)
\Bigg( F^k_+(\zeta_b(\varepsilon),\varepsilon)
\\
& \qquad \qquad \qquad \qquad \qquad \qquad \qquad \qquad
+ \int_\zeta^{\zeta_b(\varepsilon)}
\theta(\zeta_b(\varepsilon),s;\varepsilon)\tfrac{n^{k-1}(s)\mathcal{M}(s,\varepsilon)\text{d}s}
{\tau(s)\sqrt{2[\varepsilon-W(s)]}}\Bigg)
\\
& \;\; =\bm{1}_{\varepsilon > 0}\, \theta(\zeta,\infty;\varepsilon) F_\infty (-\sqrt{2\varepsilon} )
+\bm{1}_{\varepsilon > 0}\int_\zeta^\infty\theta(\zeta,s;\varepsilon)\mu(s,\varepsilon)
n^{k-1}(s)\mathcal{M}(s,\varepsilon)\text{d}s
\\
& \;\;\quad +\bm{1}_{W_\text{min}<\varepsilon<0}\, \theta(\zeta,\zeta_b(\varepsilon);\varepsilon)
\\
& \qquad \qquad \qquad \qquad \times
\int_{\zeta_a(\varepsilon)}^{\zeta_b(\varepsilon)}
\tfrac{\partial_s\theta(s,\zeta_a(\varepsilon);\varepsilon)-\partial_s\theta(\zeta_a(\varepsilon),s;\varepsilon)}
{\theta(\zeta_b(\varepsilon),\zeta_a(\varepsilon);\varepsilon)-\theta(\zeta_a(\varepsilon),\zeta_b(\varepsilon);\varepsilon)}
n^{k-1}(s)\mathcal{M}(s,\varepsilon)\text{d}s
\\
& \;\;\quad +\bm{1}_{W_\text{min}<\varepsilon<0}\int_\zeta^{\zeta_b(\varepsilon)}
\theta(\zeta,s;\varepsilon)\mu(s,\varepsilon)n^{k-1}(s)\mathcal{M}(s,\varepsilon)\text{d}s
\\
& \;\; =\bm{1}_{\varepsilon > 0}\, \theta(\zeta,\infty;\varepsilon) F_\infty (-\sqrt{2\varepsilon} )
-\bm{1}_{\varepsilon > 0}\int_\zeta^\infty \partial_s\theta(\zeta,s;\varepsilon)
n^{k-1}(s)\mathcal{M}(s,\varepsilon)\text{d}s
\\
& \;\;\quad +\bm{1}_{W_\text{min}<\varepsilon<0}\, \theta(\zeta,\zeta_b(\varepsilon);\varepsilon)
\\
& \qquad \qquad \qquad \qquad \times
\int_{\zeta_a(\varepsilon)}^{\zeta_b(\varepsilon)}
\tfrac{\partial_s\theta(s,\zeta_a(\varepsilon);\varepsilon)-\partial_s\theta(\zeta_a(\varepsilon),s;\varepsilon)}
{\theta(\zeta_b(\varepsilon),\zeta_a(\varepsilon);\varepsilon)-\theta(\zeta_a(\varepsilon),\zeta_b(\varepsilon);\varepsilon)}
n^{k-1}(s)\mathcal{M}(s,\varepsilon)\text{d}s
\\
& \;\;\quad -\bm{1}_{W_\text{min}<\varepsilon<0}\int_\zeta^{\zeta_b(\varepsilon)} \partial_s\theta(\zeta,s;\varepsilon)
n^{k-1}(s)\mathcal{M}(s,\varepsilon)\text{d}s.
\end{align*}
Summarizing, we have proved that
\begin{subequations}\label{Fpm}
\begin{align}
& F_+^k(\zeta,\varepsilon) = \bm{1}_{\varepsilon > 0}\, \theta(\zeta_a(\varepsilon),\zeta;\varepsilon)\,
\theta(\zeta_a(\varepsilon),\infty;\varepsilon) F_\infty (-\sqrt{2\varepsilon} )
\nonumber \\
& \qquad \qquad \quad +\int_0^\infty K_+(\zeta,s,\varepsilon)n^{k-1}(s)\mathcal{M}(s,\varepsilon)\text{d}s,
\label{Fpm-a} \\
& F_-^k(\zeta,\varepsilon) = \bm{1}_{\varepsilon > 0}\, \theta(\zeta,\infty;\varepsilon)
F_\infty (-\sqrt{2\varepsilon} )
\nonumber \\
& \qquad \qquad \quad +\int_0^\infty K_-(\zeta,s,\varepsilon)n^{k-1}(s)\mathcal{M}(s,\varepsilon)\text{d}s,
\label{Fpm-b}
\end{align}
\end{subequations}
where
\begin{subequations}\label{Kpm}
\begin{align}
& K_+(\zeta,s,\varepsilon):= -\bm{1}_{\varepsilon > 0}\bm{1}_{\zeta>\zeta_a(\varepsilon)}\,
\theta(\zeta_a(\varepsilon),\zeta,\varepsilon)\, \partial_s\theta(\zeta_a(\varepsilon),s,\varepsilon)\bm{1}_{s>\zeta_a(\varepsilon)}
\nonumber \\
& \qquad \qquad \qquad
+\bm{1}_{W_\text{min}<\varepsilon<0}\bm{1}_{\zeta_a(\varepsilon)<\zeta<\zeta_b(\varepsilon)}\,
\theta(\zeta_a(\varepsilon),\zeta,\varepsilon)
\nonumber \\
& \qquad \qquad \qquad \qquad \qquad \times
\tfrac{\partial_s\theta(s,\zeta_b(\varepsilon);\varepsilon)-\partial_s\theta(\zeta_b(\varepsilon),s;\varepsilon)}
{\theta(\zeta_b(\varepsilon),\zeta_a(\varepsilon);\varepsilon)-\theta(\zeta_a(\varepsilon),\zeta_b(\varepsilon);\varepsilon)}
\bm{1}_{\zeta_a(\varepsilon)<s<\zeta_b(\varepsilon)}
\nonumber \\
& \qquad \qquad \qquad
+\bm{1}_{\varepsilon > 0}\partial_s\theta(s,\zeta,\varepsilon)\bm{1}_{\zeta_a(\varepsilon)<s<\zeta}
\nonumber \\
& \qquad \qquad \qquad
+\bm{1}_{W_\text{min}<\varepsilon<0}\partial_s\theta(s,\zeta,\varepsilon)
\bm{1}_{\zeta_a(\varepsilon)<s<\zeta<\zeta_b(\varepsilon)},
\label{Kpm-a} \\[2mm]
& K_-(\zeta,s,\varepsilon):=-\bm{1}_{\varepsilon > 0} \partial_s\theta(\zeta,s;\varepsilon)\bm{1}_{s>\zeta>\zeta_a(\varepsilon)}
\nonumber \\
& \qquad \qquad \qquad
-\bm{1}_{W_\text{min}<\varepsilon<0}\, \partial_s\theta(\zeta,s;\varepsilon)\bm{1}_{\zeta_a(\varepsilon)<\zeta<s<\zeta_b(\varepsilon)}
\nonumber \\
& \qquad \qquad \qquad
+\bm{1}_{W_\text{min}<\varepsilon<0}\bm{1}_{\zeta_a(\varepsilon)<\zeta<\zeta_b(\varepsilon)}\theta(\zeta,\zeta_b(\varepsilon);\varepsilon)
\nonumber \\
& \qquad \qquad \qquad \qquad \qquad \times
\tfrac{\partial_s\theta(s,\zeta_a(\varepsilon);\varepsilon)-\partial_s\theta(\zeta_a(\varepsilon),s;\varepsilon)}{\theta(\zeta_b(\varepsilon),\zeta_a(\varepsilon);\varepsilon)-\theta(\zeta_a(\varepsilon),\zeta_b(\varepsilon);\varepsilon)}\bm{1}_{\zeta_a(\varepsilon)<s<\zeta_b(\varepsilon)}.
\label{Kpm-b}
\end{align}
\end{subequations}
Since both position variables $\zeta$ and $s$ appear in \eqref{Kpm}, the indicator functions
for these variables, such as $\bm{1}_{\zeta>\zeta_a(\varepsilon)}$, $\bm{1}_{s>\zeta_a(\varepsilon)}$,
and $\bm{1}_{\zeta_a(\varepsilon)<\zeta<s<\zeta_b(\varepsilon)}$, are shown explicitly
to avoid confusion.

\subsubsection{Properties of $K_\pm(\zeta,s,\varepsilon)$}

Equations \eqref{def-mu} and \eqref{diff-theta} show that $-\partial_s \theta(\zeta, s; \varepsilon) > 0$
and $\partial_s \theta (s,\, \zeta; \varepsilon) > 0$, so that
\begin{align*}
K_\pm (\zeta, s, \varepsilon) \ge 0.
\end{align*}

On the other hand
\begin{align*}
& \int_0^\infty K_+(\zeta,s,\varepsilon)\text{d}s
\\
& \qquad =\bm{1}_{\varepsilon>0}\bm{1}_{\zeta>\zeta_a(\varepsilon)}\theta(\zeta_a(\varepsilon),\zeta; \varepsilon)
[1-\theta(\zeta_a(\varepsilon),\infty; \varepsilon)]
\\
& \qquad \quad +\bm{1}_{W_\text{min}<\varepsilon<0}\bm{1}_{\zeta_a(\varepsilon)<\zeta<\zeta_b(\varepsilon)}\,
\theta(\zeta_a(\varepsilon),\zeta; \varepsilon)
\\
& \qquad \quad + (\bm{1}_{W_\text{min}<\varepsilon<0}\bm{1}_{\zeta_a(\varepsilon)<\zeta<\zeta_b(\varepsilon)}
+\bm{1}_{\varepsilon>0}\bm{1}_{\zeta>\zeta_a(\varepsilon)}) [1-\theta(\zeta_a(\varepsilon),\zeta; \varepsilon)],
\end{align*}
or, equivalently,
\begin{align}\label{est-intK+}
& \int_0^\infty K_+(\zeta,s,\varepsilon)\text{d}s
\nonumber \\
& \qquad 
=(\bm{1}_{W_\text{min}<\varepsilon<0}\bm{1}_{\zeta_a(\varepsilon)<\zeta<\zeta_b(\varepsilon)}
+\bm{1}_{\varepsilon>0}\bm{1}_{\zeta>\zeta_a(\varepsilon)})\, \theta(\zeta_a(\varepsilon),\zeta;\varepsilon)
\nonumber \\
& \qquad \quad +(\bm{1}_{W_\text{min}<\varepsilon<0}\bm{1}_{\zeta_a(\varepsilon)<\zeta<\zeta_b(\varepsilon)}
+\bm{1}_{\varepsilon>0}\bm{1}_{\zeta>\zeta_a(\varepsilon)}) [1-\theta(\zeta_a(\varepsilon),\zeta;\varepsilon)]
\nonumber \\
& \qquad \quad -\bm{1}_{\varepsilon>0}\bm{1}_{\zeta>\zeta_a(\varepsilon)}\,
\theta(\zeta_a(\varepsilon),\zeta;\varepsilon)\, \theta(\zeta_a(\varepsilon),\infty;\varepsilon)
\nonumber \\
& \qquad =\bm{1}_{W(\zeta)<\varepsilon}-\bm{1}_{\varepsilon>0}\bm{1}_{\zeta>\zeta_a(\varepsilon)}\,
\theta(\zeta_a(\varepsilon),\zeta;\varepsilon)\, \theta(\zeta_a(\varepsilon),\infty;\varepsilon).
\end{align}
Likewise, one easily finds that
\begin{align}\label{est-intK-}
& \int_0^\infty K_-(\zeta,s,\varepsilon)\text{d}s
\nonumber \\
& \qquad 
=\bm{1}_{\varepsilon>0}\bm{1}_{\zeta>\zeta_a(\varepsilon)} [1-\theta(\zeta,\infty;\varepsilon)]
\nonumber \\
& \qquad \quad
+\bm{1}_{W_\text{min}<\varepsilon<0}\bm{1}_{\zeta_a(\varepsilon)<\zeta<\zeta_b(\varepsilon)}
[1-\theta(\zeta,\zeta_b(\varepsilon);\varepsilon)]
\nonumber \\
& \qquad \quad
+\bm{1}_{W_\text{min}<\varepsilon<0}\bm{1}_{\zeta_a(\varepsilon)<\zeta<\zeta_b(\varepsilon)}\,
\theta(\zeta,\zeta_b(\varepsilon);\varepsilon)
\nonumber \\
& \qquad
=\bm{1}_{\varepsilon>0}\bm{1}_{\zeta>\zeta_a(\varepsilon)}+\bm{1}_{W_\text{min}<\varepsilon<0}\bm{1}_{\zeta_a(\varepsilon)<\zeta<\zeta_b(\varepsilon)}-\bm{1}_{\varepsilon>0}\bm{1}_{\zeta>\zeta_a(\varepsilon)}\theta(\zeta,\infty;\varepsilon)
\nonumber \\
& \qquad
=\bm{1}_{W(\zeta)<\varepsilon}-\bm{1}_{\varepsilon>0}\bm{1}_{\zeta>\zeta_a(\varepsilon)}\theta(\zeta,\infty;\varepsilon).
\end{align}
\subsubsection{Monotonicity of the approximating sequence}

In this section, we seek to prove the following result for the
approximating sequences $\{F_\pm^k\}$ and $\{n^k\}$ based on
\eqref{HS-3-it} and \eqref{HS-3-it-init}.

\begin{lemma}\label{lem-monoton}
Assume that $0\le F_\infty (-\sqrt{2\varepsilon} ) \le Ae^{-\varepsilon}/\sqrt{2\pi}$. Then
\[
0=F^0_\pm(\zeta,\varepsilon) \le F^1_\pm(\zeta,\varepsilon) \le F^2_\pm(\zeta,\varepsilon)
\le \ldots \le F^k_\pm(\zeta,\varepsilon) \le \ldots \le Ae^{-\varepsilon}/\sqrt{2\pi}.
\]
Therefore
\[
0=n^0(\zeta) \le n^1(\zeta) \le n^2(\zeta) \le \ldots \le n^k(\zeta) \le \ldots \le Ae^{-W(\zeta)}.
\]
\end{lemma}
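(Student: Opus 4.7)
The plan is to prove the lemma by induction on $k$, using the integral representations \eqref{Fpm}--\eqref{Kpm} together with the nonnegativity of the kernels $K_\pm$ and the exact evaluations \eqref{est-intK+}, \eqref{est-intK-} of their $s$-integrals. Since the iteration is linear in $n^{k-1}$, both monotonicity and the upper bound propagate cleanly.

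For the base case $k=0$, the equalities $F^0_\pm = 0$ and $n^0 = 0$ are trivially nonnegative and below the asserted upper bounds; setting $n^0 \equiv 0$ in \eqref{Fpm} reduces $F^1_\pm$ to the boundary term involving $F_\infty(-\sqrt{2\varepsilon})$, which is nonnegative, so $F^1_\pm \ge 0 = F^0_\pm$ and the monotonicity chain starts.

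For the induction step, assume $0 \le F^{k-1}_\pm \le F^k_\pm$ and $0 \le n^{k-1} \le n^k \le A\,e^{-W}$. Subtracting \eqref{Fpm-a} (respectively \eqref{Fpm-b}) at levels $k+1$ and $k$, the boundary term cancels and
\begin{align*}
F^{k+1}_\pm(\zeta,\varepsilon) - F^k_\pm(\zeta,\varepsilon) = \int_0^\infty K_\pm(\zeta,s,\varepsilon)\,[n^k(s) - n^{k-1}(s)]\,\mathcal{M}(s,\varepsilon)\,\mathrm{d}s \ge 0,
\end{align*}
since $K_\pm \ge 0$ and $n^k \ge n^{k-1}$. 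Integrating against $\mathrm{d}\varepsilon/\sqrt{2[\varepsilon - W(\zeta)]}$ and using \eqref{HS-3-it-b} yields $n^{k+1}(\zeta) \ge n^k(\zeta)$, completing the monotonicity part.

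For the upper bound the key observation is that $n^k(s) \le A\,e^{-W(s)}$ combined with \eqref{HS-3-c} gives
\begin{align*}
n^k(s)\,\mathcal{M}(s,\varepsilon) \le A\,e^{-W(s)} \,(2\pi)^{-1/2}\, e^{-\varepsilon + W(s)} = \frac{A\,e^{-\varepsilon}}{\sqrt{2\pi}},
\end{align*}
so combining this with the hypothesis $F_\infty(-\sqrt{2\varepsilon}) \le A\,e^{-\varepsilon}/\sqrt{2\pi}$ in \eqref{Fpm-a} and invoking \eqref{est-intK+} gives
\begin{align*}
F^{k+1}_+(\zeta,\varepsilon) \le \frac{A\,e^{-\varepsilon}}{\sqrt{2\pi}} \left[\bm{1}_{\varepsilon>0}\,\theta(\zeta_a(\varepsilon),\zeta;\varepsilon)\,\theta(\zeta_a(\varepsilon),\infty;\varepsilon) + \int_0^\infty K_+(\zeta,s,\varepsilon)\,\mathrm{d}s\right] \le \frac{A\,e^{-\varepsilon}}{\sqrt{2\pi}},
\end{align*}
and an identical argument with \eqref{est-intK-} yields the same bound for $F^{k+1}_-$. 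Feeding $F^{k+1}_+ + F^{k+1}_- \le 2 A\,e^{-\varepsilon}/\sqrt{2\pi}$ into \eqref{HS-3-it-b} and changing variables to $u = \varepsilon - W(\zeta)$ produces
\begin{align*}
n^{k+1}(\zeta) \le \frac{A\,e^{-W(\zeta)}}{\sqrt{\pi}} \int_0^\infty \frac{e^{-u}}{\sqrt{u}}\,\mathrm{d}u = A\,e^{-W(\zeta)},
\end{align*}
which closes the induction. The argument is essentially mechanical once the kernels are in hand; the only delicate point is that the constant $A$ must be preserved in passing from the bound on $F^{k+1}_\pm$ back to the bound on $n^{k+1}$, which relies precisely on the identity $\int_0^\infty e^{-u}/\sqrt{u}\,\mathrm{d}u = \sqrt{\pi}$. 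In other words, the Maxwellian form of the envelope is exactly what makes the bookkeeping close without losing a factor at each iteration.
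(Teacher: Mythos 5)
Your proof is correct and follows essentially the same route as the paper's: induction using the integral representation \eqref{Fpm}, nonnegativity of $K_\pm$, the kernel-integral identities \eqref{est-intK+}--\eqref{est-intK-} (with the crucial cancellation of the boundary-term coefficient against the kernel integral), and the Gaussian integral $\int_0^\infty e^{-u}/\sqrt{2u}\,\mathrm{d}u = \sqrt{\pi/2}$ to close the loop from $F^k_\pm$ to $n^k$. The only cosmetic difference is that you fold monotonicity and the upper bound into a single induction hypothesis, whereas the paper runs them as two parallel chains of implications; the substance is identical.
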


\begin{proof}
Since
\begin{align*}
& F_+^{k+1}(\zeta,\varepsilon)-F_+^k(\zeta,\varepsilon)=\int_0^\infty
K_+(\zeta,s,\varepsilon)[n^k(s)-n^{k-1}(s)]\mathcal{M}(s,\varepsilon)\text{d}s,
\\
& F_-^{k+1}(\zeta,\varepsilon)-F_-^k(\zeta,\varepsilon)=\int_0^\infty
K_-(\zeta,s,\varepsilon)[n^k(s)-n^{k-1}(s)]\mathcal{M}(s,\varepsilon)\text{d}s,
\end{align*}
with $K_\pm(\zeta,s,\varepsilon)\ge 0$, one has
\begin{align*}
n^k \ge n^{k-1} \implies F_\pm^{k+1} \ge F_\pm^k.
\end{align*}
On the other hand, by definition of $n^k(\zeta)$, one has
\begin{align*}
n^k(\zeta)-n^{k-1}(\zeta)=&\int_{W(\zeta)}^\infty [F_+^k(\zeta,\varepsilon)-F_+^{k-1}(\zeta,\varepsilon)]
\frac{\text{d}\varepsilon}{\sqrt{2[\varepsilon-W(\zeta)]}}
\\
&+\int_{W(\zeta)}^\infty [F_-^k(\zeta,\varepsilon)-F_-^{k-1}(\zeta,\varepsilon)]
\frac{\text{d}\varepsilon}{\sqrt{2[\varepsilon-W(\zeta)]}},
\end{align*}
so that
\[
F^k_\pm\ge F^{k-1}_\pm \implies n^k \ge n^{k-1} \implies F_\pm^{k+1}\ge F_\pm^k.
\]
Besides
\begin{align*}
& F_+^1(\zeta,\varepsilon)=\bm{1}_{\varepsilon>0}\, \theta(\zeta_a(\varepsilon),\zeta;\varepsilon)\,
\theta(\zeta_a(\varepsilon),\infty;\varepsilon) F_\infty (-\sqrt{2\varepsilon} ) \ge 0=F_+^0(\zeta,\varepsilon),
\\
& F_-^1(\zeta,\varepsilon)=\bm{1}_{\varepsilon>0}\, \theta(\zeta,\infty;\varepsilon)
F_\infty (-\sqrt{2\varepsilon} ) \ge 0=F_-^0(\zeta,\varepsilon),
\end{align*}
so that, by induction, we conclude that
\begin{align*}
& 0=F^0_\pm(\zeta,\varepsilon) \le F^1_\pm(\zeta,\varepsilon) \le F^2_\pm(\zeta,\varepsilon)
\le \ldots \le F^k_\pm(\zeta,\varepsilon) \le \ldots,
\\
& 0=n^0(\zeta) \le n^1(\zeta) \le n^2(\zeta) \le \ldots \le n^k(\zeta) \le \ldots.
\end{align*}
It remains to prove the upper bound. Here again, we proceed by induction. Clearly
\[
F^0_\pm=0 \implies n^0=0 \implies F^0_\pm \le Ae^{-\varepsilon}/\sqrt{2\pi}
\;\;\; \text{and}\;\;\; n^0\mathcal{M}(\zeta,\varepsilon) \le Ae^{-\varepsilon}/\sqrt{2\pi}.
\]
Next we prove that
\[
n^{k-1}(\zeta)\mathcal{M}(\zeta,\varepsilon) \le Ae^{-\varepsilon}/\sqrt{2\pi}
\implies F^k_\pm \le Ae^{-\varepsilon}/\sqrt{2\pi}.
\]
Indeed, by the use of \eqref{est-intK+} and \eqref{est-intK-}, the following
inequalities follow from \eqref{Fpm-a} and \eqref{Fpm-b}:
\begin{align*}
F_+^k(\zeta,\varepsilon)& = \bm{1}_{\varepsilon>0}\, \bm{1}_{\zeta > \zeta_a(\varepsilon)}\,
\theta(\zeta_a(\varepsilon),\zeta;\varepsilon)\,
\theta(\zeta_a(\varepsilon),\infty;\varepsilon)F_\infty (-\sqrt{2\varepsilon} )
\\
& \quad +\int_0^\infty K_+(\zeta,s,\varepsilon)n^{k-1}(s)\mathcal{M}(s,\varepsilon)\text{d}s
\\
& \le \bm{1}_{\varepsilon>0}\, \bm{1}_{\zeta > \zeta_a(\varepsilon)}\,
\theta(\zeta_a(\varepsilon),\zeta;\varepsilon)\,
\theta(\zeta_a(\varepsilon),\infty;\varepsilon)Ae^{-\varepsilon}/\sqrt{2\pi}
\\
& \quad +\left[ \int_0^\infty K_+(\zeta,s,\varepsilon)\text{d}s\right] Ae^{-\varepsilon}/\sqrt{2\pi}
\\
& \le \bm{1}_{\varepsilon>0}\, \bm{1}_{\zeta > \zeta_a(\varepsilon)}\,
\theta(\zeta_a(\varepsilon),\zeta;\varepsilon)\,
\theta(\zeta_a(\varepsilon),\infty;\varepsilon)Ae^{-\varepsilon}/\sqrt{2\pi}
\\
& \quad + [\bm{1}_{W(\zeta)<\varepsilon}-\bm{1}_{\varepsilon>0}\bm{1}_{\zeta>\zeta_a(\varepsilon)}
\theta(\zeta_a(\varepsilon),\zeta;\varepsilon)\, \theta(\zeta_a(\varepsilon),\infty;\varepsilon) ] Ae^{-\varepsilon}/\sqrt{2\pi}
\\
& = \bm{1}_{W(\zeta)<\varepsilon} A e^{-\varepsilon}/\sqrt{2\pi}
\\
& \le Ae^{-\varepsilon}/\sqrt{2\pi},
\end{align*}
while
\begin{align*}
F_-^k(\zeta,\varepsilon)& = \bm{1}_{\varepsilon>0}\, \bm{1}_{\zeta > \zeta_a(\varepsilon)}\,
\theta(\zeta,\infty;\varepsilon) F_\infty (-\sqrt{2\varepsilon})
+\int_0^\infty K_-(\zeta,s,\varepsilon)n^{k-1}(s)\mathcal{M}(s,\varepsilon)\text{d}s
\\
& \le \bm{1}_{\varepsilon>0}\, \bm{1}_{\zeta > \zeta_a(\varepsilon)}\,
\theta(\zeta,\infty;\varepsilon)Ae^{-\varepsilon}/\sqrt{2\pi}
+\left[ \int_0^\infty K_-(\zeta,s,\varepsilon)\text{d}s\right] Ae^{-\varepsilon}/\sqrt{2\pi}
\\
& \le \bm{1}_{\varepsilon>0}\, \bm{1}_{\zeta > \zeta_a(\varepsilon)}\,
\theta(\zeta,\infty;\varepsilon)Ae^{-\varepsilon}/\sqrt{2\pi}
\\
& \quad
+ [ \bm{1}_{W(\zeta)<\varepsilon}-\bm{1}_{\varepsilon>0}\bm{1}_{\zeta>\zeta_a(\varepsilon)}\theta(\zeta,\infty,\varepsilon) ]
Ae^{-\varepsilon}/\sqrt{2\pi}
\\
& = \bm{1}_{W(\zeta)<\varepsilon} A e^{-\varepsilon}/\sqrt{2\pi}
\\
& \le Ae^{-\varepsilon}/\sqrt{2\pi},
\end{align*}
where $\bm{1}_{\varepsilon > 0}$ in \eqref{Fpm} has been replaced
with the more explicit representation
$\bm{1}_{\varepsilon>0}\,\bm{1}_{\zeta > \zeta_a(\varepsilon)}$ in consistency
with the expressions \eqref{est-intK+} and \eqref{est-intK-}.

It remains to prove that
\[
F^k_\pm \le Ae^{-\varepsilon}/\sqrt{2\pi} \implies
n^k(\zeta) \mathcal{M}(\zeta,\varepsilon) \le Ae^{-\varepsilon}/\sqrt{2\pi}.
\]
Observe that
\begin{align*}
n^k(\zeta)& = \int_{W(\zeta)}^\infty [F^k_-(\zeta,\varepsilon)+F^k_+(\zeta,\varepsilon)] \frac{\text{d}\varepsilon}
{\sqrt{2[\varepsilon-W(\zeta)]}}
\\
& \le \frac{2A}{\sqrt{2\pi}}\int_{W(\zeta)}^\infty\frac{e^{-\varepsilon}\text{d}\varepsilon}{\sqrt{2[\varepsilon-W(\zeta)]}}
= \frac{2Ae^{-W(\zeta)}}{\sqrt{2\pi}}\int_0^\infty\frac{e^{-u}\text{d}u}{\sqrt{2u}}
\\
& = \frac{Ae^{-W(\zeta)}}{\sqrt{\pi}} \Gamma \left(\frac{1}{2} \right)
= Ae^{-W(\zeta)},
\end{align*}
where $\Gamma (x) = \int_0^\infty s^{x-1} e^{-s} \text{d}s$ is Euler’s gamma function.
Thus, it follows that
\[
F^k_\pm\le Ae^{-\varepsilon}/\sqrt{2\pi} \implies
n^k(\zeta) \le Ae^{-W(\zeta)}\implies n^k(\zeta)\mathcal{M}(\zeta,\varepsilon) \le Ae^{-\varepsilon}/\sqrt{2\pi}.
\]
This completes the proof of monotonicity of the approximating sequence.
\end{proof}

\subsubsection{Existence proof for problem \eqref{HS-3}}

Using Lemma \ref{lem-monoton} and applying the monotone convergence
theorem, we find that $F^k_\pm(\zeta,\varepsilon) \to F_\pm(\zeta,\varepsilon)$ 
and $n^k(\zeta) \to n(\zeta)$ as $k \to\infty$. Then, we pass to the limit
as $k \to\infty$ in the formulas \eqref{HS-3-it-intg} to obtain
\begin{align*}
F_+(\zeta,\varepsilon)& =\theta(\zeta_a(\varepsilon),\zeta;\varepsilon)
\Bigg(F_-(\zeta_a(\varepsilon),\varepsilon)
\\
& \qquad \qquad \;\;\;
+\int_{\zeta_a(\varepsilon)}^\zeta \theta(s,\zeta_a(\varepsilon);\varepsilon)
\frac{n(s)\mathcal{M}(s,\varepsilon)\text{d}s}{\tau(s)\sqrt{2[\varepsilon-W(s)]}}\Bigg),
\\
F_-(\zeta,\varepsilon)& =\bm{1}_{\varepsilon>0}\, \theta(\zeta,\infty;\varepsilon)
\Bigg(F_\infty (-\sqrt{2\varepsilon} )
\\
& \qquad \qquad \;\;\;
+\int_\zeta^\infty\theta(\infty,s;\varepsilon)\frac{n(s)\mathcal{M}(s,\varepsilon)\text{d}s}
{\tau(s)\sqrt{2[\varepsilon-W(s)]}}\Bigg)
\\
& \quad +\bm{1}_{W_\text{min}<\varepsilon<0}\, \theta(\zeta,\zeta_b(\varepsilon);\varepsilon)
\Bigg(F_+(\zeta_b(\varepsilon),\varepsilon)
\\
& \qquad \qquad \;\;\;
+\int_\zeta^{\zeta_b(\varepsilon)}\theta(\zeta_b(\varepsilon),s;\varepsilon)
\frac{n(s)\mathcal{M}(s,\varepsilon)\text{d}s}
{\tau(s)\sqrt{2[\varepsilon-W(s)]}}\Bigg).
\end{align*}
It is readily seen that these equations are equivalent to \eqref{HS-3} and
\eqref{HS-3-continuity}. In other words, the nondecreasing sequence
$\{ F^k_\pm \}$ converges to a solution of the problem \eqref{HS-3}, which
shows the existence of a solution $F_\pm$ 
of \eqref{HS-3} satisfying the inequality 
\[
0\le F_\pm(\zeta,\varepsilon) \le Ae^{-\varepsilon}/\sqrt{2\pi}.
\]
As for the uniqueness of such a solution, it has already been proved in
Lemma \ref{lem-unique}.

\subsubsection{Convergence as $\zeta\to\infty$}

We investigate the limit of $F_\pm (\zeta, \varepsilon)$ as $\zeta \to \infty$
on the basis of the expressions \eqref{Fpm} and \eqref{Kpm}. Therefore,
it is sufficient to consider the case of $\varepsilon > 0$.
Let us recall the definition \eqref{theta} of the function $\theta(a,b;\varepsilon)$
and its properties \eqref{theta-prop-1} and \eqref{diff-theta}, as well as
the definitions \eqref{HS-3-c} for $\mathcal{M}(s,\varepsilon)$ and \eqref{def-mu}
for $\mu (s,\varepsilon)$.

Choose $\varepsilon > 0$ to be kept fixed. Then, it follows from \eqref{Kpm-a} that
\begin{align*}
0 & \le K_+(\zeta,s,\varepsilon)
\\
& = \bm{1}_{\zeta_a(\varepsilon)<s<\zeta}\theta(s,\zeta;\varepsilon)\mu(s,\varepsilon)
\\
& \quad +\bm{1}_{\zeta>\zeta_a(\varepsilon)}\bm{1}_{s>\zeta_a(\varepsilon)}
\theta(\zeta_a(\varepsilon),\zeta;\varepsilon)\theta(\zeta_a(\varepsilon),s;\varepsilon)\mu(s,\varepsilon)
\\
& \le \bm{1}_{\zeta>\zeta_a(\varepsilon)}\bm{1}_{s>\zeta_a(\varepsilon)}\, 2\mu(s,\varepsilon),
\end{align*}
and
\begin{align*}
& \lim_{\zeta\to+\infty}K_+(\zeta,s,\varepsilon)
\\
& \quad=\bm{1}_{s>\zeta_a(\varepsilon)}
\mu(s,\varepsilon) [\theta(s,+\infty;\varepsilon)+\theta(\zeta_a(\varepsilon),+\infty;\varepsilon)\,
\theta(\zeta_a(\varepsilon),s;\varepsilon)].
\end{align*}
Similarly, \eqref{Kpm-b} leads to 
\begin{align*}
0\le  K_-(\zeta,s,\varepsilon)=\bm{1}_{s>\zeta>\zeta_a(\varepsilon)}\theta(\zeta,s;\varepsilon)\mu(s,\varepsilon)
\le \bm{1}_{s>\zeta>\zeta_a(\varepsilon)}\mu(s,\varepsilon),
\end{align*}
so that
$$
\lim_{\zeta\to+\infty}K_-(\zeta,s,\varepsilon)=0.
$$

Assume that 
$$
0\le F_\infty(-\sqrt{2\varepsilon})\le Ae^{-\varepsilon}/\sqrt{2\pi},
$$
so that we know from Lemma \ref{lem-monoton} that
$$
0\le F_\pm(\zeta,\varepsilon) \le Ae^{-\varepsilon}/\sqrt{2\pi}\quad
\text{ and }\quad 0\le n(\zeta)\le Ae^{-W(\zeta)}.
$$
Then
\begin{align*}
0 & \le K_+(\zeta,s,\varepsilon)n(s)\mathcal{M}(s,\varepsilon)
\\
& \le 2\bm{1}_{\zeta>\zeta_a(\varepsilon)}\bm{1}_{s>\zeta_a(\varepsilon)}\mu(s,\varepsilon) Ae^{-W(s)}\tfrac1{\sqrt{2\pi}}e^{-\varepsilon+W(s)}
\\
& = A\sqrt{\tfrac2\pi}e^{-\varepsilon}\bm{1}_{\zeta>\zeta_a(\varepsilon)}\bm{1}_{s>\zeta_a(\varepsilon)}\mu(s,\varepsilon)
\\
& \le A\sqrt{\tfrac2\pi}\bm{1}_{\zeta>\zeta_a(\varepsilon)}\bm{1}_{s>\zeta_a(\varepsilon)}\mu(s,\varepsilon),
\end{align*}
while
\begin{align*}
0 &\le K_-(\zeta,s,\varepsilon)n(s)\mathcal{M}(s,\varepsilon)
\\
& \le \bm{1}_{s>\zeta>\zeta_a(\varepsilon)}\mu(s,\varepsilon)Ae^{-W(s)}\tfrac1{\sqrt{2\pi}}e^{-\varepsilon+W(s)}
\\
& =\tfrac{A}{\sqrt{2\pi}}e^{-\varepsilon}\bm{1}_{s>\zeta>\zeta_a(\varepsilon)}\mu(s,\varepsilon)
\\
& \le \tfrac{A}{\sqrt{2\pi}}\bm{1}_{s>\zeta>\zeta_a(\varepsilon)}\mu(s,\varepsilon).
\end{align*}
Since
$$
\int_0^\infty \bm{1}_{s>\zeta_a(\varepsilon)}\mu(s,\varepsilon)\text{d}s<\infty,
$$
we conclude by dominated convergence that, for each $\varepsilon>0$, one has
\begin{align*}
& \lim_{\zeta\to+\infty}\int_0^\infty K_+(\zeta,s,\varepsilon)n(s)\mathcal{M}(s,\varepsilon)\text{d}s
\\
& \qquad =\int_0^\infty \bm{1}_{s>\zeta_a(\varepsilon)}\mu(s,\varepsilon)
[\theta(s,+\infty,\varepsilon)
\\
& \qquad \qquad \qquad \qquad \qquad
+\theta(\zeta_a(\varepsilon),+\infty,\varepsilon)
\theta(\zeta_a(\varepsilon),s,\varepsilon)]n(s)\mathcal{M}(s,\varepsilon)\text{d}s,
\\
& \lim_{\zeta\to+\infty}\int_0^\infty K_-(\zeta,s,\varepsilon)n(s)\mathcal{M}(s,\varepsilon)\text{d}s=0.
\end{align*}
Hence, for each $\varepsilon>0$, 
\begin{subequations}\label{limits}
\begin{align}
& \lim_{\zeta\to+\infty}F_+(\zeta,\varepsilon)
\nonumber \\
& \qquad = \theta(\zeta_a(\varepsilon),\infty;\varepsilon)^2F_\infty (-\sqrt{2\varepsilon})
\nonumber \\
& \qquad \quad +\int_{\zeta_a(\varepsilon)}^\infty \mu(s,\varepsilon)
[\theta(s,+\infty,\varepsilon)
\nonumber \\
& \qquad \qquad \qquad \qquad \qquad
+\theta(\zeta_a(\varepsilon),+\infty,\varepsilon)\theta(\zeta_a(\varepsilon),s,\varepsilon)]n(s)\mathcal{M}(s,\varepsilon)\text{d}s,
\label{limits-a} \\
& \lim_{\zeta\to+\infty}F_-(\zeta,\varepsilon)=F_\infty (-\sqrt{2\varepsilon}).
\label{limits-b}
\end{align}
\end{subequations}
Equation \eqref{limits-b} confirms that the boundary condition \eqref{HS-2-d}
in the problem \eqref{HS-2} is satisfied, while \eqref{limits-a}
shows that the limit of the solution $F(\zeta, c_z)$ to the problem
\eqref{HS-2} as $\zeta \to \infty$ exists for $c_z > 0$.

%	%	%	%	%
\subsection{Proof of exponential convergence of approximating sequence $\{F_\pm^k\}$}\label{subsec:expornential}
%	%	%	%	%

We finally prove Theorem \ref{thrm-exp-conv}. 
Let us return to the integral transformation \eqref{Fpm}. Multiplying each side of both equalities
by $\mu(\zeta,\varepsilon)\tau(\zeta)$ and integrating in $\varepsilon>W(\zeta)$, one finds that
\begin{align}\label{IntEq-nk}
n^k(\zeta)=\int_0^\infty\mathcal{K}(\zeta,s)n^{k-1}(s)\text{d}s+N(\zeta)\,,\qquad n^0=0,
\end{align}
where
\begin{align}\label{Def-cK}
\mathcal{K}(\zeta,s):=\tau(\zeta)\int_{W(\zeta)}^\infty
[K_+(\zeta,s,\varepsilon)+K_-(\zeta,s,\varepsilon)]\mu(\zeta,\varepsilon)
\mathcal{M}(s,\varepsilon)\text{d}\varepsilon,
\end{align}
and
\begin{align}\label{Def-N}
& N(\zeta):=\tau(\zeta)\int_{W(\zeta)}^\infty
\bm{1}_{\varepsilon>0}\,
[\theta(\zeta_a(\varepsilon),\zeta;\varepsilon)\theta(\zeta_a(\varepsilon),\infty;\varepsilon)
\nonumber \\
& \qquad \qquad \qquad \qquad \qquad \quad
+\theta(\zeta,\infty;\varepsilon)]
F_\infty (-\sqrt{2\varepsilon}) \mu(\zeta,\varepsilon)\text{d}\varepsilon.
\end{align}

The following two things suggest considering the quantity
$$
\int_0^\infty n^k(\zeta)\frac{\text{d}\zeta}{\tau(\zeta)}:
$$
\begin{itemize}
\item
if $n^k(\zeta)$ converges to a limit $n^k_\infty>0$ as $\zeta\to+\infty$,
the weight $1/\tau(\zeta)$, known to be integrable on the half-line, will make
the function $\zeta\mapsto n^k(\zeta)/\tau(\zeta)$ also integrable; and
\item
there is the prefactor $\tau(\zeta)$ in the definition \eqref{Def-cK}
of $\mathcal{K}(\zeta,s)$ as well as in the definition \eqref{Def-N} of $N(\zeta)$.
\end{itemize}

Thus, multiplying both sides of \eqref{IntEq-nk} by $1/\tau(\zeta)$, integrating in
$\zeta$ over the half-line, and exchanging the order of integration in $\zeta$
and in $s$ in the first integral on the right-hand side of \eqref{IntEq-nk},
we arrive at the equality
\begin{align}\label{n^k-int}
\int_0^\infty n^k(\zeta)\frac{\text{d}\zeta}{\tau(\zeta)}
=\int_0^\infty\left(\int_0^\infty\mathcal{K}(\zeta,s)\frac{\tau(s)}{\tau(\zeta)}\text{d}
\zeta\right)n^{k-1}(s)\frac{\text{d}s}{\tau(s)}+\int_0^\infty N(\zeta)\frac{\text{d}\zeta}{\tau(\zeta)}.
\end{align}
Therefore, we are left with the task of computing
\begin{align}\label{Int-cK}
& \int_0^\infty\! \mathcal{K}(\zeta,s) \frac{\text{d}\zeta}{\tau(\zeta)}
\nonumber \\
& \qquad = \int_0^\infty\!\!\int_{W(\zeta)}^\infty
[K_+(\zeta,s,\varepsilon)+K_-(\zeta,s,\varepsilon)]
\mu(\zeta,\varepsilon)\mathcal{M}(s,\varepsilon)\text{d}\varepsilon \text{d}\zeta
\nonumber \\
& \qquad = \int_0^\infty\!\mathcal{M}(s,\varepsilon) \left(\int_{\zeta_a(\varepsilon)}^\infty
[K_+(\zeta,s,\varepsilon)\!+\!K_-(\zeta,s,\varepsilon)]
\mu(\zeta,\varepsilon)\text{d}\zeta\right) \text{d}\varepsilon 
\nonumber \\
& \qquad \quad +\int_{W_\text{min}}^0\!\mathcal{M}(s,\varepsilon)
\left(\int_{\zeta_a(\varepsilon)}^{\zeta_b(\varepsilon)}
[K_+(\zeta,s,\varepsilon)\!+\!K_-(\zeta,s,\varepsilon)]\mu(\zeta,\varepsilon)
\text{d}\zeta\right)\text{d}\varepsilon.
\end{align}
\subsubsection{Computing the inner integrals in \eqref{Int-cK}}

We recall from \eqref{Kpm-a} and \eqref{Kpm-b} that
\begin{align}\label{muK+}
& \mu(\zeta,\varepsilon)K_+(\zeta,s,\varepsilon)
\nonumber \\
& \quad
=\bm{1}_{\varepsilon>0}\bm{1}_{\zeta>\zeta_a(\varepsilon)}\bm{1}_{s>\zeta_a(\varepsilon)}
[-\partial_\zeta\theta(\zeta_a(\varepsilon),\zeta;\varepsilon)]
\theta(\zeta_a(\varepsilon),s;\varepsilon)\mu(s,\varepsilon)
\nonumber \\
& \quad \quad
+\bm{1}_{W_\text{min}<\varepsilon<0}\bm{1}_{\zeta_a(\varepsilon)<\zeta<\zeta_b(\varepsilon)}
\bm{1}_{\zeta_a(\varepsilon)<s<\zeta_b(\varepsilon)}
[-\partial_\zeta\theta(\zeta_a(\varepsilon),\zeta;\varepsilon)]
\nonumber \\
& \qquad \qquad \qquad \qquad \qquad \qquad \qquad \times
\tfrac{[\theta(s,\zeta_b(\varepsilon);\varepsilon)+\theta(\zeta_b(\varepsilon),s;\varepsilon)]
\mu(s,\varepsilon)}{\theta(\zeta_b(\varepsilon),\zeta_a(\varepsilon);\varepsilon)-\theta(\zeta_a(\varepsilon),\zeta_b(\varepsilon);\varepsilon)}
\nonumber \\
& \quad \quad
+(\bm{1}_{\varepsilon>0}\bm{1}_{\zeta_a(\varepsilon)<s<\zeta}+\bm{1}_{W_\text{min}<\varepsilon<0}
\bm{1}_{\zeta_a(\varepsilon)<s<\zeta<\zeta_b(\varepsilon)})
[-\partial_\zeta\theta(s,\zeta;\varepsilon)]\mu(s,\varepsilon),
\end{align}
and
\begin{align}\label{muK-}
& \mu(\zeta,\varepsilon)K_-(\zeta,s,\varepsilon)
\nonumber \\
& \quad
=(\bm{1}_{\varepsilon>0}\bm{1}_{\zeta_a(\varepsilon)<\zeta<s}+\bm{1}_{W_\text{min}<\varepsilon<0}
\bm{1}_{\zeta_a(\varepsilon)<\zeta<s<\zeta_b(\varepsilon)})\, \partial_\zeta\theta(\zeta,s;\varepsilon)\mu(s,\varepsilon)
\nonumber \\
& \quad \quad
+\bm{1}_{W_\text{min}<\varepsilon<0}\bm{1}_{\zeta_a(\varepsilon)<\zeta<\zeta_b(\varepsilon)}
\bm{1}_{\zeta_a(\varepsilon)<s<\zeta_b(\varepsilon)}\, \partial_\zeta\theta(\zeta,\zeta_b(\varepsilon);\varepsilon)
\nonumber \\
& \qquad \qquad \qquad \qquad \qquad \qquad \qquad \times
\tfrac{[\theta(s,\zeta_a(\varepsilon);\varepsilon)+\theta(\zeta_a(\varepsilon),s;\varepsilon)]
\mu(s,\varepsilon)}{\theta(\zeta_b(\varepsilon),\zeta_a(\varepsilon);\varepsilon)-\theta(\zeta_a(\varepsilon),\zeta_b(\varepsilon);\varepsilon)}.
\end{align}
Observe that we have used \eqref{diff-theta} to transform the terms
$$
\tfrac{\partial_s\theta(s,\zeta_b(\varepsilon);\varepsilon)-\partial_s\theta(\zeta_b(\varepsilon),s;\varepsilon)}
{\theta(\zeta_b(\varepsilon),\zeta_a(\varepsilon);\varepsilon)-\theta(\zeta_a(\varepsilon),\zeta_b(\varepsilon);\varepsilon)}
\;\; \text{ and } \;\;
\tfrac{\partial_s\theta(s,\zeta_a(\varepsilon);\varepsilon)-\partial_s\theta(\zeta_a(\varepsilon),s;\varepsilon)}
{\theta(\zeta_b(\varepsilon),\zeta_a(\varepsilon);\varepsilon)-\theta(\zeta_a(\varepsilon),\zeta_b(\varepsilon);\varepsilon)}
$$
into
$$
\tfrac{[\theta(s,\zeta_b(\varepsilon);\varepsilon)+\theta(\zeta_b(\varepsilon),s;\varepsilon)]\mu(s,\varepsilon)}
{\theta(\zeta_b(\varepsilon),\zeta_a(\varepsilon);\varepsilon)-\theta(\zeta_a(\varepsilon),\zeta_b(\varepsilon);\varepsilon)}
\;\; \text{ and } \;\;
\tfrac{[\theta(s,\zeta_a(\varepsilon);\varepsilon)+\theta(\zeta_a(\varepsilon),s;\varepsilon)]\mu(s,\varepsilon)}
{\theta(\zeta_b(\varepsilon),\zeta_a(\varepsilon);\varepsilon)-\theta(\zeta_a(\varepsilon),\zeta_b(\varepsilon);\varepsilon)},
$$
respectively, and the terms
$$
\mu(\zeta,\varepsilon)\theta(\zeta_a(\varepsilon),\zeta;\varepsilon)
\;\; \text{ and } \;\;
\mu(\zeta;\varepsilon)\theta(\zeta,\zeta_b(\varepsilon);\varepsilon)
$$
into
$$
-\partial_\zeta\theta(\zeta_a(\varepsilon),\zeta;\varepsilon)
\;\; \text{ and } \;\;
\partial_\zeta\theta(\zeta,\zeta_b(\varepsilon);\varepsilon),
$$
respectively.

Then, for $\varepsilon>0$, one has
\begin{subequations}\label{K-int-e>0}
\begin{align}
& \int_{\zeta_a(\varepsilon)}^\infty K_+(\zeta,s,\varepsilon)\mu(\zeta,\varepsilon)\text{d}\zeta
\nonumber \\
& \qquad \;\;\;
=\bm{1}_{\varepsilon>0}\bm{1}_{s>\zeta_a(\varepsilon)}
[1-\theta(\zeta_a(\varepsilon),\infty;\varepsilon)] \theta(\zeta_a(\varepsilon),s;\varepsilon)\mu(s,\varepsilon)
\nonumber \\
& \qquad \;\;\; \quad
+\bm{1}_{\varepsilon>0}\bm{1}_{\zeta_a(\varepsilon)<s} [1-\theta(s,\infty;\varepsilon)] \mu(s,\varepsilon),
\\
& \int_{\zeta_a(\varepsilon)}^\infty K_-(\zeta,s,\varepsilon)\mu(\zeta,\varepsilon)\text{d}\zeta
=\bm{1}_{\varepsilon>0}\bm{1}_{\zeta_a(\varepsilon)<s} [1-\theta(\zeta_a(\varepsilon),s;\varepsilon)] \mu(s,\varepsilon),
\end{align}
\end{subequations}
while, for $W_\text{min}<\varepsilon<0$, 

\begin{subequations}\label{K-int-e<0}
\begin{align}
& \int_{\zeta_a(\varepsilon)}^{\zeta_b(\varepsilon)} K_+(\zeta,s,\varepsilon)\mu(\zeta,\varepsilon)\text{d}\zeta
\nonumber \\
& \qquad \;\;\;
=\bm{1}_{W_\text{min}<\varepsilon<0}\bm{1}_{\zeta_a(\varepsilon)<s<\zeta_b(\varepsilon)}
[1-\theta(s,\zeta_b(\varepsilon);\varepsilon)]\mu(s,\varepsilon)
\nonumber \\
& \qquad \;\;\; \quad
+\bm{1}_{W_\text{min}<\varepsilon<0}\bm{1}_{\zeta_a(\varepsilon)<s<\zeta_b(\varepsilon)}
[1-\theta(\zeta_a(\varepsilon),\zeta_b(\varepsilon);\varepsilon)]
\nonumber \\
& \qquad \qquad \qquad \qquad \qquad \qquad \qquad \times
\tfrac{[\theta(s,\zeta_b(\varepsilon);\varepsilon)+\theta(\zeta_b(\varepsilon),s;\varepsilon)]\mu(s,\varepsilon)}
{\theta(\zeta_b(\varepsilon),\zeta_a(\varepsilon);\varepsilon)-\theta(\zeta_a(\varepsilon),\zeta_b(\varepsilon);\varepsilon)},
\\
& \int_{\zeta_a(\varepsilon)}^{\zeta_b(\varepsilon)} K_-(\zeta,s,\varepsilon)\mu(\zeta,\varepsilon)\text{d}\zeta
\nonumber \\
& \qquad \;\;\;
=\bm{1}_{W_\text{min}<\varepsilon<0}\bm{1}_{\zeta_a(\varepsilon)<s<\zeta_b(\varepsilon)}
[1-\theta(\zeta_a(\varepsilon),s;\varepsilon)]\mu(s,\varepsilon)
\nonumber \\
& \qquad \;\;\; \quad
+\bm{1}_{W_\text{min}<\varepsilon<0}\bm{1}_{\zeta_a(\varepsilon)<s<\zeta_b(\varepsilon)}
[1-\theta(\zeta_a(\varepsilon),\zeta_b(\varepsilon);\varepsilon)]
\nonumber \\
& \qquad \qquad \qquad \qquad \qquad \qquad \qquad \times
\tfrac{[\theta(s,\zeta_a(\varepsilon);\varepsilon)+\theta(\zeta_a(\varepsilon),s;\varepsilon]
\mu(s,\varepsilon)}{\theta(\zeta_b(\varepsilon),\zeta_a(\varepsilon);\varepsilon)-\theta(\zeta_a(\varepsilon),\zeta_b(\varepsilon);\varepsilon)}.
\end{align}
\end{subequations}
We here note that the integral in $\zeta$ over $(\zeta_a(\varepsilon), \infty)$ and
that over $(\zeta_a(\varepsilon), \zeta_b(\varepsilon))$ in \eqref{Int-cK} are
expressed in the unified form
\begin{align}\label{unified}
\int_0^\infty [K_+(\zeta,s,\varepsilon)\!+\!K_-(\zeta,s,\varepsilon)]
\mu(\zeta,\varepsilon)\text{d}\zeta,
\end{align}
because of the expressions \eqref{muK+} and \eqref{muK-}. Using \eqref{K-int-e>0} and \eqref{K-int-e<0},
this integral is expressed as
\begin{align}\label{IntK++K-}
& \int_0^\infty [K_+(\zeta,s,\varepsilon)+K_-(\zeta,s,\varepsilon)]\mu(\zeta,\varepsilon)\text{d}\zeta
\nonumber \\
& \quad
=\bm{1}_{\varepsilon>0}\bm{1}_{\zeta_a(\varepsilon)<s}[1-\theta(\zeta_a(\varepsilon),\infty;\varepsilon)
\theta(\zeta_a(\varepsilon),s;\varepsilon)+1-\theta(s,\infty;\varepsilon)]\mu(s,\varepsilon)
\nonumber \\
& \qquad
+\bm{1}_{W_\text{min}<\varepsilon<0}\bm{1}_{\zeta_a(\varepsilon)<s<\zeta_b(\varepsilon)}
[1-\theta(s,\zeta_b(\varepsilon);\varepsilon)+1-\theta(\zeta_a(\varepsilon),s;\varepsilon)]\mu(s,\varepsilon)
\nonumber \\
& \qquad
+\bm{1}_{W_\text{min}<\varepsilon<0}\bm{1}_{\zeta_a(\varepsilon)<s<\zeta_b(\varepsilon)}
[1-\theta(\zeta_a(\varepsilon),\zeta_b(\varepsilon);\varepsilon)]
\nonumber \\
& \qquad \qquad \qquad
\times\tfrac{[\theta(s,\zeta_b(\varepsilon);\varepsilon)+\theta(s,\zeta_a(\varepsilon);\varepsilon)
+\theta(\zeta_b(\varepsilon),s;\varepsilon) +\theta(\zeta_a(\varepsilon),s;\varepsilon)]\mu(s,\varepsilon)}
{\theta(\zeta_b(\varepsilon),\zeta_a(\varepsilon);\varepsilon)-\theta(\zeta_a(\varepsilon),\zeta_b(\varepsilon);\varepsilon)}.
\end{align}

We shall simplify the last two lines significantly: set 
\[
X:=\theta(s,\zeta_a(\varepsilon);\varepsilon)>1\quad\text{ and }\quad
\Lambda:=\theta(\zeta_b(\varepsilon),\zeta_a(\varepsilon);\varepsilon)>1
\]
for $\zeta_a(\varepsilon)<s<\zeta_b(\varepsilon)$, so that
\[
\theta(\zeta_a(\varepsilon),s;\varepsilon)=1/X,\quad
\theta(\zeta_b(\varepsilon),s;\varepsilon)=\Lambda/X,\quad
\theta(s,\zeta_b(\varepsilon);\varepsilon)=X/\Lambda.
\]
Then, one has
\begin{align*}%\label{Miracle}
&
[1-\theta(s,\zeta_b(\varepsilon);\varepsilon)+1-\theta(\zeta_a(\varepsilon),s;\varepsilon)]
\\
& +[1-\theta(\zeta_a(\varepsilon),\zeta_b(\varepsilon);\varepsilon)]
\tfrac{\theta(s,\zeta_b(\varepsilon);\varepsilon)+\theta(s,\zeta_a(\varepsilon);\varepsilon)
+\theta(\zeta_b(\varepsilon),s;\varepsilon)+\theta(\zeta_a(\varepsilon),s;\varepsilon)}
{\theta(\zeta_b(\varepsilon),\zeta_a(\varepsilon);\varepsilon)-\theta(\zeta_a(\varepsilon),\zeta_b(\varepsilon);\varepsilon)}
\\
& \quad =(2-\tfrac{X}{\Lambda}-\tfrac1X)+(1-\tfrac1\Lambda)
\frac{\frac{X}\Lambda+X+\frac\Lambda{X}+\frac1X}{\Lambda-\frac1\Lambda}
\\
& \quad =(2-\tfrac{X}{\Lambda}-\tfrac1X)+(1-\tfrac1\Lambda)
\frac{(\frac{X}\Lambda+\frac1X)(1+\Lambda)}{\frac{(\Lambda+1)(\Lambda-1)}\Lambda}
\\
& \quad =(2-\tfrac{X}{\Lambda}-\tfrac1X)+(1-\tfrac1\Lambda)\frac{\frac{X}\Lambda+\frac1X}{\frac{\Lambda-1}\Lambda}
\\
& \quad =2.
\end{align*}
Therefore,
\begin{align}\label{IntK++K-Bis}
& \int_0^\infty[K_+(\zeta,s,\varepsilon)+K_-(\zeta,s,\varepsilon)]\mu(\zeta,\varepsilon)\text{d}\zeta
\nonumber \\
&\qquad  =\bm{1}_{\varepsilon>0}\bm{1}_{\zeta_a(\varepsilon)<s}
[2-\theta(\zeta_a(\varepsilon),\infty,\varepsilon)\theta(\zeta_a(\varepsilon),s;\varepsilon)
-\theta(s,\infty;\varepsilon)]\mu(s,\varepsilon)
\nonumber \\
& \qquad \quad +2\bm{1}_{W_\text{min}<\varepsilon<0}\bm{1}_{\zeta_a(\varepsilon)<s<\zeta_b(\varepsilon)}
\mu(s,\varepsilon).
\end{align}
On the other hand, setting
\[
L:=\theta(\zeta_a(\varepsilon),\infty;\varepsilon)\quad\text{ and }\quad Z:=\theta(s,\infty;\varepsilon),
\]
one has
\[
\theta(\zeta_a(\varepsilon),s;\varepsilon)=L/Z\,,\quad L<Z<1.
\]
Thus,
\[
\theta(\zeta_a(\varepsilon),\infty,\varepsilon)\theta(\zeta_a(\varepsilon),s;\varepsilon)
+\theta(s,\infty;\varepsilon)=\tfrac{L^2}Z+Z\ge 2L,
\]
since
\[
\zeta_a(\varepsilon)<s\implies 0<\theta(\zeta_a(\varepsilon),s;\varepsilon)=L/Z<1
\]
and $Z\mapsto \tfrac{L^2}Z+Z$ is increasing on $[L,1]$. Thus, it follows that
\[
2-\theta(\zeta_a(\varepsilon),\infty;\varepsilon)\theta(\zeta_a(\varepsilon),s;\varepsilon)
-\theta(s,\infty;\varepsilon)\le 2[1-\theta(\zeta_a(\varepsilon),\infty,\varepsilon)].
\]

Summarizing, we have proved the following inequality:
\begin{align}\label{IntK++K-<}
& \int_0^\infty[K_+(\zeta,s,\varepsilon)+K_-(\zeta,s,\varepsilon)]\mu(\zeta,\varepsilon)\text{d}\zeta
\nonumber \\
& \qquad \qquad
\le 2\mu(s,\varepsilon)\bm{1}_{W(s)<\varepsilon}[1-\theta(\zeta_a(\varepsilon),\infty,\varepsilon)\bm{1}_{\varepsilon>0}].
\end{align}
\subsubsection{Bounding \eqref{Int-cK}}

As noted in the process of deriving \eqref{IntK++K-},
the integrals with respect to $\zeta$ in \eqref{Int-cK} are unified in the form
\eqref{unified}, so that \eqref{Int-cK} is recast as
\begin{align}\label{Int-cK-2}
& \int_0^\infty\! \mathcal{K}(\zeta,s) \frac{\text{d}\zeta}{\tau(\zeta)}
\nonumber \\
& \qquad = \int_{W_\text{min}}^\infty\!\mathcal{M}(s,\varepsilon) \left(\int_0^\infty
[K_+(\zeta,s,\varepsilon)\!+\!K_-(\zeta,s,\varepsilon)]
\mu(\zeta,\varepsilon)\text{d}\zeta\right) \text{d}\varepsilon.
\end{align}
Next, integrating \eqref{IntK++K-<} multiplied by
$\mathcal{M}(s,\varepsilon)$ in $\varepsilon$
from $W_\text{min}$ to $\infty$ and using the resulting inequality in \eqref{Int-cK-2},
we arrive at the following inequality:
\begin{align}\label{cL<}
& \int_0^\infty \mathcal{K}(\zeta,s) \frac{\tau(s)}{\tau(\zeta)} \text{d}\zeta
\nonumber \\
& \qquad \le 2\int_{W(s)}^\infty \mathcal{M}(s,\varepsilon)\tau(s)\mu(s,\varepsilon)\text{d}\varepsilon
\nonumber \\
& \qquad \quad
-2\int_0^\infty\theta(\zeta_a(\varepsilon),\infty;\varepsilon)\mathcal{M}(s,\varepsilon)
\tau(s)\mu(s,\varepsilon)\bm{1}_{\zeta_a(\varepsilon)<s}\text{d}\varepsilon
\nonumber \\
& \qquad
=1-2\int_0^\infty\theta(\zeta_a(\varepsilon),\infty;\varepsilon)\mathcal{M}(s,\varepsilon)
\tau(s)\mu(s,\varepsilon)\bm{1}_{\zeta_a(\varepsilon)<s}\text{d}\varepsilon
\nonumber \\
& \qquad
=: \Upsilon(s).
\end{align}
Indeed,
\begin{align*}
\int_{W(s)}^\infty \mathcal{M}(s,\varepsilon)\tau(s)\mu(s,\varepsilon)\text{d}\varepsilon
& = \frac{1}{\sqrt{2\pi}}\int_{W(s)}^\infty \frac{e^{W(s)-\varepsilon}\text{d}\varepsilon}{\sqrt{2[\varepsilon-W(s)]}}
\\
& = \frac{1}{\sqrt{2\pi}}\int_0^\infty \frac{e^{-u}\text{d}u}{\sqrt{2u}}
=\frac{1}{2\sqrt{\pi}}\Gamma(\tfrac{1}{2})
=\frac12.
\end{align*}

Our aim is to find an upper bound for $\Upsilon(s)$, which amounts to finding
a lower bound for the integral in the fourth line of \eqref{cL<}, i.e.,
\[
\int_0^\infty\theta(\zeta_a(\varepsilon),\infty;\varepsilon)\mathcal{M}(s,\varepsilon)
\tau(s)\mu(s,\varepsilon)\bm{1}_{\zeta_a(\varepsilon)<s}\text{d}\varepsilon.
\]

In order to do so, we first seek a lower bound for 
\[
\theta(\zeta_a(\varepsilon),\infty,\varepsilon)=\exp\left(-\int_{\zeta_a(\varepsilon)}^\infty
\frac{\text{d}t}{\tau(t)\sqrt{2[\varepsilon-W(t)]}}\right),
\]
or, equivalently, an upper bound for the integral in the exponential.

This step will use some of the specifics of the potential, summarized
in \eqref{potential} and items (i)--(iv) in Sec.~\ref{subsec:kin-model}.
Note that the statements in Sec.~\ref{subsec:kin-model} are for the
dimensional potential, whereas the potential $W$ considered here is
dimensionless [cf.~\eqref{var-dimless} and \eqref{notation-ch}].
Therefore, we rephrase the specific properties of $W$ that will be
used in the following:
\begin{itemize}
\item $W$ is decreasing and convex downward on $(0,\zeta_\text{min})$, and tends to $+\infty$ at $0^+$;
\item $W$ is continuous and increasing on $(\zeta_\text{min},+\infty)$, and tends to $0^-$ at $+\infty$.
\end{itemize}

Thus, for each $\varepsilon>0$, the point $\zeta_a(\varepsilon)$ is uniquely defined by
$$
W(\zeta_a(\varepsilon))=\varepsilon\,,\quad 0<\zeta_a(\varepsilon)<\zeta_a(0)<\zeta_\text{min};
$$
besides
$$
W_\text{min}=W(\zeta_\text{min})=\min_{\zeta>0}W(\zeta)<0.
$$
On the other hand, we recall, from the properties of the dimensional relaxation
time $\tau_\text{ph,s}(\zeta)$ in \eqref{tauph-limit} and below in Sec.~\ref{subsec:kin-model},
that
$$
\tau(\zeta)\ge\tau(0)>0\,,\quad\text{ and }\quad \ell:=\int_0^\infty\frac{\text{d}s}{\tau(s)}<+\infty,
$$
for its dimensionless counterpart $\tau(\zeta)$ considered here.

We first decompose the integral contained in the function
$\theta(\zeta_a(\varepsilon), \infty; \varepsilon)$ as
\begin{align*}
& \int_{\zeta_a(\varepsilon)}^\infty\frac{\text{d}t}{\tau(t)\sqrt{2[\varepsilon-W(t)]}}
\\
& \qquad
=\int_{\zeta_a(\varepsilon)}^{\zeta_\text{min}}\frac{\text{d}t}{\tau(t)\sqrt{2[\varepsilon-W(t)]}}
+\int_{\zeta_\text{min}}^\infty\frac{\text{d}t}{\tau(t)\sqrt{2[\varepsilon-W(t)]}}
\\
& \qquad
=I+J,
\end{align*}
and estimate $I$ and $J$ separately.

The easiest term to bound is $J$: indeed, for $\varepsilon>0$, one has
$$
W\vert_{(\zeta_\text{min},+\infty)}\le 0\implies J\le
\int_{\zeta_\text{min}}^\infty\frac{\text{d}t}{\tau(t)\sqrt{2\varepsilon}}\le\frac{\ell}{\sqrt{2\varepsilon}}.
$$

Bounding the term $I$ is slightly more involved. We first use the convexity of $W$
on the interval $(0,\zeta_\text{min})$, which implies that
\begin{align*}
W(t) & =W\left(\tfrac{\zeta_\text{min}-t}{\zeta_\text{min}-\zeta_a(\varepsilon)}\zeta_a(\varepsilon)
+\tfrac{t-\zeta_a(\varepsilon)}{\zeta_\text{min}-\zeta_a(\varepsilon)}\zeta_\text{min}\right)
\\
& \le \tfrac{\zeta_\text{min}-t}{\zeta_\text{min}-\zeta_a(\varepsilon)}W(\zeta_a(\varepsilon))
+\tfrac{t-\zeta_a(\varepsilon)}{\zeta_\text{min}-\zeta_a(\varepsilon)}W(\zeta_\text{min}),
\end{align*}
or, equivalently,
\begin{align*}
W(t) & \le W(\zeta_a(\varepsilon))-\tfrac{W(\zeta_a(\varepsilon))-W(\zeta_\text{min})}
{\zeta_\text{min}-\zeta_a(\varepsilon)}[t-\zeta_a(\varepsilon)]
\\
& = \varepsilon -\tfrac{\varepsilon-W_\text{min}}{\zeta_\text{min}-\zeta_a(\varepsilon)}
[t-\zeta_a(\varepsilon)].
\end{align*}
The geometric interpretation of this inequality is as follows: it means that the graph
of $W$ is below the chord joining the point of coordinates $(\zeta_a(\varepsilon),\varepsilon)$
to the point of coordinates $(\zeta_\text{min},W_\text{min})$. 
This is indeed an obvious consequence of the convexity of $W$ on the interval $(0,\zeta_\text{min})$.

Thus
$$
t\in(\zeta_a(\varepsilon),\zeta_\text{min})\implies
\varepsilon-W(t)\ge\tfrac{\varepsilon-W_\text{min}}{\zeta_\text{min}-\zeta_a(\varepsilon)}[t-\zeta_a(\varepsilon)],
$$
and hence
$$
I\le\tfrac{1}{\tau(0)}\sqrt{\tfrac{\zeta_\text{min}-\zeta_a(\varepsilon)}{2(\varepsilon-W_\text{min})}}
\int_{\zeta_a(\varepsilon)}^{\zeta_\text{min}}\tfrac{\text{d}t}{\sqrt{t-\zeta_a(\varepsilon)}}
= \tfrac2{\tau(0)}\tfrac{\zeta_\text{min}-\zeta_a(\varepsilon)}{\sqrt{2(\varepsilon-W_\text{min})}}
\le\tfrac{2\zeta_\text{min}}{\tau(0)\sqrt{2\varepsilon}}\,.
$$
Therefore
$$
\int_{\zeta_a(\varepsilon)}^\infty\frac{\text{d}t}{\tau(t)\sqrt{2[\varepsilon-W(t)]}}
\le\left(\frac{2\zeta_\text{min}}{\tau(0)}+\ell\right)\frac{1}{\sqrt{2\varepsilon}},
$$
so that
\begin{align}\label{LowBndTh}
\theta(\zeta_a(\varepsilon),\infty;\varepsilon)\ge e^{-K/\sqrt{\varepsilon}},
\quad\text{ with }K:=\frac{\sqrt{2}\zeta_\text{min}}{\tau(0)}+\frac{\ell}{\sqrt{2}}.
\end{align}
Now, we insert the lower bound \eqref{LowBndTh} in
the definition of $\Upsilon(s)$ included in \eqref{cL<}, that is,
\begin{align*}
\frac{1-\Upsilon(s)}{2}
& =\int_0^\infty\theta(\zeta_a(\varepsilon),\infty;\varepsilon)
\mathcal{M}(s,\varepsilon)\tau(s)\mu(s,\varepsilon)\bm{1}_{\zeta_a(\varepsilon)<s}\text{d}\varepsilon
\\
& \ge\int_0^\infty e^{-K/\sqrt{\varepsilon}}\mathcal{M}(s,\varepsilon)\tau(s)\mu(s,\varepsilon)
\bm{1}_{\zeta_a(\varepsilon)<s}\text{d}\varepsilon
\\
& =\frac{1}{\sqrt{2\pi}}\int_0^\infty \frac{e^{-K/\sqrt{\varepsilon}}e^{W(s)-\varepsilon}
\bm{1}_{\zeta_a(\varepsilon)<s}}{\sqrt{2[\varepsilon-W(s)]}}\text{d}\varepsilon
\\
& =\frac1{\sqrt{2\pi}}\int_{W(s)^+}^\infty \frac{e^{-K/\sqrt{\varepsilon}}e^{W(s)-\varepsilon}}
{\sqrt{2[\varepsilon-W(s)]}}\text{d}\varepsilon,
\end{align*}
with the notation $X^+=\max(X,0)$ and $X^-=\max(-X,0)$.
Change variables in the last integral, setting $u=\varepsilon-W(s)$. Since
$$
W(s)=W(s)^+-W(s)^-,
$$
one has
\begin{align*}
\frac{1-\Upsilon(s)}{2} &\ge \frac{1}{\sqrt{2\pi}}
\int_{W(s)^-}^\infty\frac{e^{-K/\sqrt{u+W(s)}}e^{-u}}{\sqrt{2u}}\text{d}u
\\
&\ge \frac1{\sqrt{2\pi}}\int_{\vert W_\text{min}\vert+1}^\infty \frac{e^{-K/\sqrt{u+W(s)}}e^{-u}}{\sqrt{2u}}\text{d}u
\end{align*}
since $W(s)^-\le \vert W_\text{min} \vert \le \vert W_\text{min} \vert +1$. 

Next, observe that the function $v\mapsto e^{-K/\sqrt{v}}$ is increasing on $(0,+\infty)$, so that
$$
u\ge\vert W_\text{min}\vert+1\implies
e^{-K/\sqrt{u+W(s)}}\ge e^{-K/\sqrt{1+\vert W_\text{min}\vert+W(s)}}\ge e^{-K}.
$$
Therefore
\begin{align*}
\frac{1-\Upsilon(s)}{2} & =\int_0^\infty\theta(\zeta_a(\varepsilon),\infty;\varepsilon)
\mathcal{M}(s,\varepsilon)\tau(s)\mu(s,\varepsilon)\bm{1}_{\zeta_a(\varepsilon)<s}\text{d}\varepsilon
\\
& \ge \frac{1}{\sqrt{2\pi}}\int_{\vert W_\text{min}\vert+1}^\infty\frac{e^{-K}e^{-u}}{\sqrt{2u}}\text{d}u>0.
\end{align*}

Summarizing, we have proved the following inequality
\begin{align*}
0 &\le\int_0^\infty\!\mathcal{K}(\zeta,s)\frac{\tau(s)}{\tau(\zeta)}\text{d}\zeta
\\
& \le 1-2\int_0^\infty\theta(\zeta_a(\varepsilon),\infty,\varepsilon)
\mathcal{M}(s,\varepsilon)\tau(s)\mu(s,\varepsilon)\bm{1}_{\zeta_a(\varepsilon)<s}\text{d}\varepsilon
\\
& = \Upsilon(s) \le \mathcal{L},
\end{align*}
where
\[
\mathcal{L} = 1-\frac{e^{-K}}{\sqrt{\pi}}\int_{\vert W_\text{min}\vert+1}^\infty\frac{e^{-u}}{\sqrt{u}}\text{d}u<1.
\]
\subsubsection{Exponential convergence of $n^k$ and of $F^k_\pm$}

Now we recall \eqref{n^k-int} and pass to the limit as $k \to \infty$
in both sides of this equality.
Since $n^k\to n$ a.e. on $(0,+\infty)$ by monotone convergence (cf.~Lemma \ref{lem-monoton}),
one has
\begin{align*}
\int_0^\infty n(\zeta)\frac{\text{d}\zeta}{\tau(\zeta)}
=\int_0^\infty\left(\int_0^\infty\mathcal{K}(\zeta,s)\frac{\tau(s)}{\tau(\zeta)}\text{d}
\zeta\right)n(s)\frac{\text{d}s}{\tau(s)}+\int_0^\infty N(\zeta)\frac{\text{d}\zeta}{\tau(\zeta)},
\end{align*}
so that
\begin{align*}
0 & \le\int_0^\infty[n(\zeta) -n^k(\zeta)]\frac{\text{d}\zeta}{\tau(\zeta)}
\\
& = \int_0^\infty \left(\int_0^\infty\mathcal{K}(\zeta,s)
\frac{\tau(s)}{\tau(\zeta)}\text{d}\zeta\right) [n(s) - n^{k-1}(s)]\frac{\text{d}s}{\tau(s)}
\\
& \le \mathcal{L} \int_0^\infty [n(s)\!-\!n^{k-1}(s)]\frac{\text{d}s}{\tau(s)}
\le \mathcal{L}^k \int_0^\infty [n(s)\!-\!n^0(s)]\frac{\text{d}s}{\tau(s)}
\\
& = \mathcal{L}^k\int_0^\infty n(s)\frac{\text{d}s}{\tau(s)}
\le A\mathcal{L}^k\int_0^\infty e^{-W(s)}\frac{\text{d}s}{\tau(s)}
\le A\ell e^{\vert W_\text{min}\vert}\mathcal{L}^k.
\end{align*}
This proves the first half of Theorem \ref{thrm-exp-conv}.

This result is easily transformed into an exponential convergence statement
on $F^k_\pm$. Indeed, by Lemma \ref{lem-monoton}, we know that $F_\pm^k$
is nondecreasing in $k$ and converges pointwise to $F_\pm$, so that
\begin{align*}
& \int_0^\infty\!\!\!\int_{-\infty}^\infty \big[\vert F_+(\zeta,\varepsilon)-F^k_+(\zeta,\varepsilon)\vert
+\vert F_-(\zeta,\varepsilon)-F^k_-(\zeta,\varepsilon)\vert\big]
\frac{\bm{1}_{\varepsilon>W(\zeta)} \text{d}\varepsilon \text{d}\zeta}
{\tau(\zeta)\sqrt{2[\varepsilon-W(\zeta)]}}
\\
& \quad =\int_0^\infty\!\!\!\int_{W(\zeta)}^\infty \big[F_+(\zeta,\varepsilon)-F^k_+(\zeta,\varepsilon)
+F_-(\zeta,\varepsilon)-F^k_-(\zeta,\varepsilon)\big]
\frac{\text{d}\varepsilon \text{d}\zeta}{\tau(\zeta)\sqrt{2[\varepsilon-W(\zeta)]}}
\\
& \quad =\int_0^\infty [n(\zeta)-n^k(\zeta)]\frac{\text{d}\zeta}{\tau(\zeta)}
\le A\ell e^{\vert W_\text{min}\vert}\mathcal{L}^k.
\end{align*}
This completes the proof of Theorem \ref{thrm-exp-conv}.

%	%	%	%	%
%	%	%	%	%
\section{Numerical results}\label{sec:numerical}
%	%	%	%	%
%	%	%	%	%

In our previous paper \cite{AGK22}, the problem \eqref{HS-1} was solved approximately
to construct models of the boundary condition \eqref{albedo-2}
and also numerically to assess the constructed models. 
The actual numerical analysis was basically carried out for \eqref{HS-3}
using a finite-difference method, the details of which are found
in Appendix C in \cite{AGK22}.

In \cite{AGK22}, the presented
results were mostly regarding the output distribution function
$\hat{f} (\zeta\to\infty,\hat{c}_z>0)$
in response to the input $\hat{f} (\zeta \to \infty, \hat{c}_z<0)$ in \eqref{albedo-1},
since the attention was focused on the boundary condition \eqref{albedo-2} for the
Boltzmann equation. In this section, using the same numerical scheme as in
\cite{AGK22}, we give some numerical results that
visualize the mathematical properties given in Sec.~\ref{sec:math}
as well as that demonstrate the behavior of the gas in the physisorbate layer.

%	%	%	%	%
\subsection{Preliminaries}
%	%	%	%	%

In order to carry out actual numerical computations for the problem \eqref{HS-3}, 
one has to specify the
interaction potential $W(\zeta)$ and the relaxation time $\tau(\zeta)$ explicitly.
Following \cite{AGK22}, we adopt the Lennard-Jones (LJ) (12,\,6)  and (9,\,3)
potentials:
\begin{subequations}\label{LJ-dimless}
\begin{align}
& W (\zeta) = 4 \kappa  \left( \frac{1}{\zeta^{12}} 
- \frac{1}{\zeta^6} \right), 
\label{LJ-dimless-a} \\
& W (\zeta) = \frac{3\sqrt{3}}{2} \kappa \left( \frac{1}{\zeta^9} 
- \frac{1}{\zeta^3} \right),
\label{LJ-dimless-b}
\end{align}
\end{subequations}
and the relaxation times of algebraic and exponential type:
\begin{subequations}\label{tau-dimless}
\begin{align}
& \tau (\zeta) = \kappa_\tau \left( 1 + \frac{\sigma}{\nu} \zeta \right)^\nu,
\label{tau-dimless-a} \\
& \tau (\zeta) = \kappa_\tau \exp (\sigma \zeta),
\label{tau-dimless-b}
\end{align}
\end{subequations}
where $\kappa$, $\kappa_\tau$, $\nu$, and $\sigma$ are parameters.
Note that \eqref{LJ-dimless-a}, \eqref{LJ-dimless-b}, \eqref{tau-dimless-a},
and \eqref{tau-dimless-b}
are in dimensionless form and correspond to (85), (88), (91), and (96) in \cite{AGK22}.
The reader is referred to \cite{AGK22} for the related quantities; for instance,
$\zeta_a(\varepsilon)$ and $\zeta_b(\varepsilon)$ for \eqref{LJ-dimless-a}
are given by (86) in \cite{AGK22} and those for \eqref{LJ-dimless-b} are
given by (89) there. It is noted that the LJ(9,\,3) potential \eqref{LJ-dimless-b}
is more realistic as a potential of interactions between
a gas molecule and a crystal surface \cite{R84,B00,L08}.
In fact, it results from a continuous model of the crystal
after volume integration.

In addition, the input velocity distribution $F_\infty (c_z)$ in
\eqref{HS-2-d} should be specified. Here, we assume the following
shifted Maxwellian:
\begin{align}\label{F-infty}
F_\infty (c_z) = \frac{1}{\sqrt{2\pi T_\infty}} \exp \left(
- \frac{(c_z - v_{z \infty})^2}{2T_\infty} \right),
\end{align}
where $v_{z \infty}$ and $T_\infty$ are the parameters to be specified.

In this Sec.~\ref{sec:numerical}, the parameters $\kappa$ in \eqref{LJ-dimless}
and $\kappa_\tau$ and $\sigma$ in \eqref{tau-dimless} are set to be
\begin{align}\label{parameters}
(\kappa, \kappa_\tau, \sigma) = (1,1,1)
\end{align}
as in \cite{AGK22}.

%	%	%	%	%
\subsection{Monotonicity and exponential convergence with respect to $k$}
%	%	%	%	%

The numerical method used in \cite{AGK22} is based on an iteration
scheme essentially the same as \eqref{HS-3-it} except that the superscript $k$
on the right-hand side of \eqref{HS-3-it-e} is replaced by $k-1$.
We also check that the numerical solution enjoys the mathematical properties
established in Sec.~\ref{sec:math}.

In the numerical computation in \cite{AGK22}, initial values different
from \eqref{HS-3-it-init} were used for the iteration. In order to mimic
the mathematical proofs in Sec.~\ref{sec:math}, we redo the computation
using the scheme in \cite{AGK22} but starting from the initial values
\eqref{HS-3-it-init}, i.e., $F_\pm^0 = n^0 = 0$.

The computation is performed in the cases summarized in Table \ref{tab1}.
To be more specific, ``LJ(12,\,6)'' indicates \eqref{LJ-dimless-a}, and
``LJ(9,\,3)'' \eqref{LJ-dimless-b}; ``algebraic'' indicates \eqref{tau-dimless-a}
(with $\nu=4$ and $7$), and ``exponential'' \eqref{tau-dimless-b}; and
the parameters $T_\infty$ and $v_{z \infty}$ are chosen as shown in the
table.
The case (viii) corresponds to the equilibrium solution.
Recall that the parameters $\kappa$, $\kappa_\tau$, and $\sigma$
are set as \eqref{parameters}.

%%%%%%%%%%%%%%%%%%%%%%%%%%%%%%%%
\begin{table}[h]
\begin{center}
\begin{minipage}{0.7\textwidth}
\caption{Computational cases and values of $a$ and $b$.}\label{tab1}%
\begin{tabular}{@{}lllll@{}}
\toprule
 & & $(T_\infty,\,v_{z\infty})$ & $a$ & $b$ \\
\midrule
(i)  &LJ(12,6),\ algebraic ($\nu=7$)& $(1,\,-0.5)$  & $0.06556$ & $-1.127$ \\
(ii)  &LJ(12,6),\ exponential      & $(1,\,-0.5)$  & $0.06226$ & $-1.221$ \\
(iii)  &LJ(9,3),\ algebraic ($\nu=4$) & $(1,\,-0.5)$  & $0.08827$ & $-0.7568$ \\
(iv)  &LJ(9,3),\ exponential       & $(1,\,-0.5)$  & $0.08193$ & $-0.9159$ \\
(v)  &LJ(9,3),\ algebraic ($\nu=4$) & $(1,\,0.5)$   & $0.08827$ & $-1.932$ \\
(vi)  &LJ(9,3),\ algebraic ($\nu=4$) & $(0.6,\,0)$   & $0.08827$ & $-1.466$ \\
(vii)  &LJ(9,3),\ algebraic ($\nu=4$) & $(0.6,\,-0.5)$& $0.08827$ & $-0.8125$ \\
(viii)  &LJ(9,3),\ algebraic ($\nu=4$) & $(1,\,0)$     & $0.08827$ & $-1.266$ \\
\botrule
\end{tabular}
\end{minipage}
\end{center}
\end{table}
%%%%%%%%%%%%%%%%%%%%%%%%%%%%%%%%

Figures \ref{fig3} and \ref{fig4} show the difference $n^k(\zeta) - n^{k-1}(\zeta)$ versus $k$
at four different positions $\zeta$ in the semi-logarithmic scale.  
Figure~\ref{fig3} contains the cases (i)--(iv) in Table \ref{tab1}, whereas
Fig.~\ref{fig4} the cases (v)--(viii) there.
The green, red, blue, and orange solid lines indicate the results at $\zeta=1.371$,
$1.122\, (=\zeta_\text{min})$, $1$, and $0.934$, respectively, in panels (a)
and (b) in Fig.~\ref{fig3} and indicate the results at $\zeta=2.293$, $1.201\, (=\zeta_\text{min})$,
$1$, and $0.901$, respectively, in panels (c) and (d) in Fig.~\ref{fig3}
as well as in all panels in Fig.~\ref{fig4}.
In Fig.~\ref{fig3}, the pair of panels (a) and (b) and that of
panels (c) and (d) show the effect of different relaxation times $\tau(\zeta)$,
whereas the pair of panels (a) and (c) and that of panels (b) and (d) show the
effect of different potentials $W(\zeta)$. Figure \ref{fig4} shows the effect
of the difference in the parameters $(T_\infty, v_{z \infty})$ in the input
velocity distribution \eqref{F-infty}.

In each panel in Figs.~\ref{fig3} and \ref{fig4}, the four solid lines
corresponding to the four different positions
seem to be straight and parallel for large $k$. 
Therefore, it is likely
that $n^k(\zeta) - n^{k-1}(\zeta)$ for large $k$ is expressed in the following form:
\begin{align*}
n^k(\zeta) - n^{k-1}(\zeta) = e^{-ak + b}, \qquad (\text{for large }k),
\end{align*}
where $a$ and $b$ depend on the parameters, but $a$ is independent 
of $\zeta$. The values of $a$ and $b$ determined by the least
square fitting using the numerical data at $\zeta=\zeta_\text{min}$ for
$k \ge 50$ are shown in Table \ref{tab1} for each case, and
the line $C e^{-ak}$ with $a$ in Table \ref{tab1} and an appropriate constant
$C$ is shown by the black dashed line in each panel.

It is seen from Figs.~\ref{fig3} and \ref{fig4} that $n^k(\zeta) - n^{k-1}(\zeta)$ is
always positive. In addition, it is likely from these figures and the above discussion
that $n^k(\zeta) - n^{k-1}(\zeta)$ decreases exponentially in $k$ with
the convergence rate independent of $\zeta$. 
This also indicates that $n^k(\zeta)$ is likely to converge
to $n(\zeta)$ exponentially fast in $k$ with the same uniform
convergence rate, i.e,
\begin{align*}
n(\zeta) - n^k(\zeta) \simeq N(\zeta) e^{-ak},
\end{align*}
for large $k$, where $N(\zeta)$ is an appropriate positive function of $\zeta$.
These observations numerically confirm a part of the statements in
Lemma \ref{lem-monoton} and Theorem \ref{thrm-exp-conv},
i.e., the fact that the sequence
$\{n^k (\zeta)\}$ increases monotonically in $k$ and converges
exponentially fast in $k$. Table \ref{tab1} shows that the convergence rate
is relatively small and is less than $0.1$.
Although Theorem \ref{thrm-exp-conv}
shows the exponential convergence of $n^k$ in a $L^1$ norm, the numerical
result suggests a pointwise convergence in $k$ with
a convergence rate uniform in $\zeta$.

%%%%%%%%%%%%%%%%%%%%%%%%%%%%%%
\begin{figure}[htb]
\begin{center}
\includegraphics[width= 0.9\textwidth]{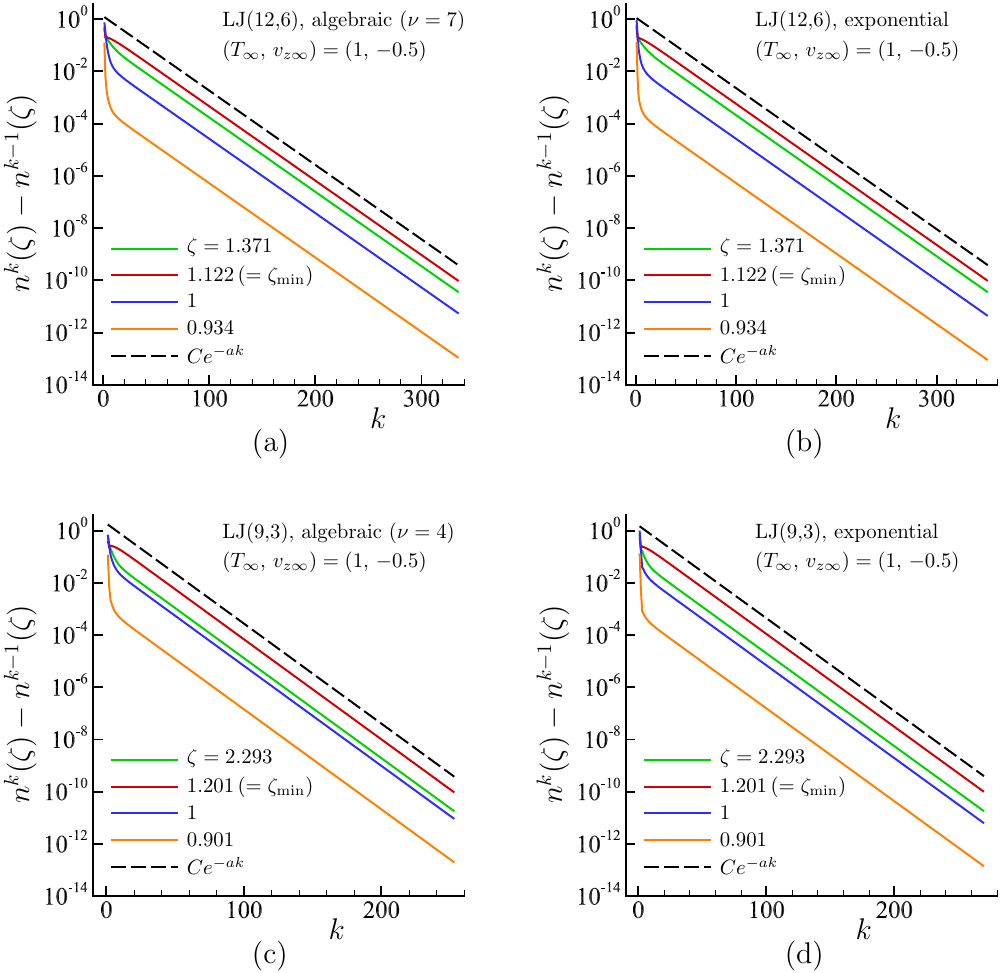}
\caption{\label{fig3}
The difference
$n^k(\zeta) - n^{k-1}(\zeta)$ versus $k$ at four different positions.
Panels (a)--(d) correspond to the cases (i)--(iv) in Table \ref{tab1}.
The green, red, blue, and orange solid lines indicate the results at $\zeta=1.371$,
$1.122\, (=\zeta_\text{min})$, $1$, and $0.934$, respectively, in panels (a)
and (b) and indicate the results at $\zeta=2.293$, $1.201\, (=\zeta_\text{min})$,
$1$, and $0.901$, respectively, in panels (c) and (d). 
The function $C e^{-ak}$ with $a$ in Table \ref{tab1} and an appropriate constant
$C$ is shown by the black dashed line.
}
\end{center}
\end{figure}
%%%%%%%%%%%%%%%%%%%%%%%%%%%%%%
%
%%%%%%%%%%%%%%%%%%%%%%%%%%%%%%
%\setcounter{figure}{1}
\begin{figure}[htb]
\begin{center}
\includegraphics[width= 0.9\textwidth]{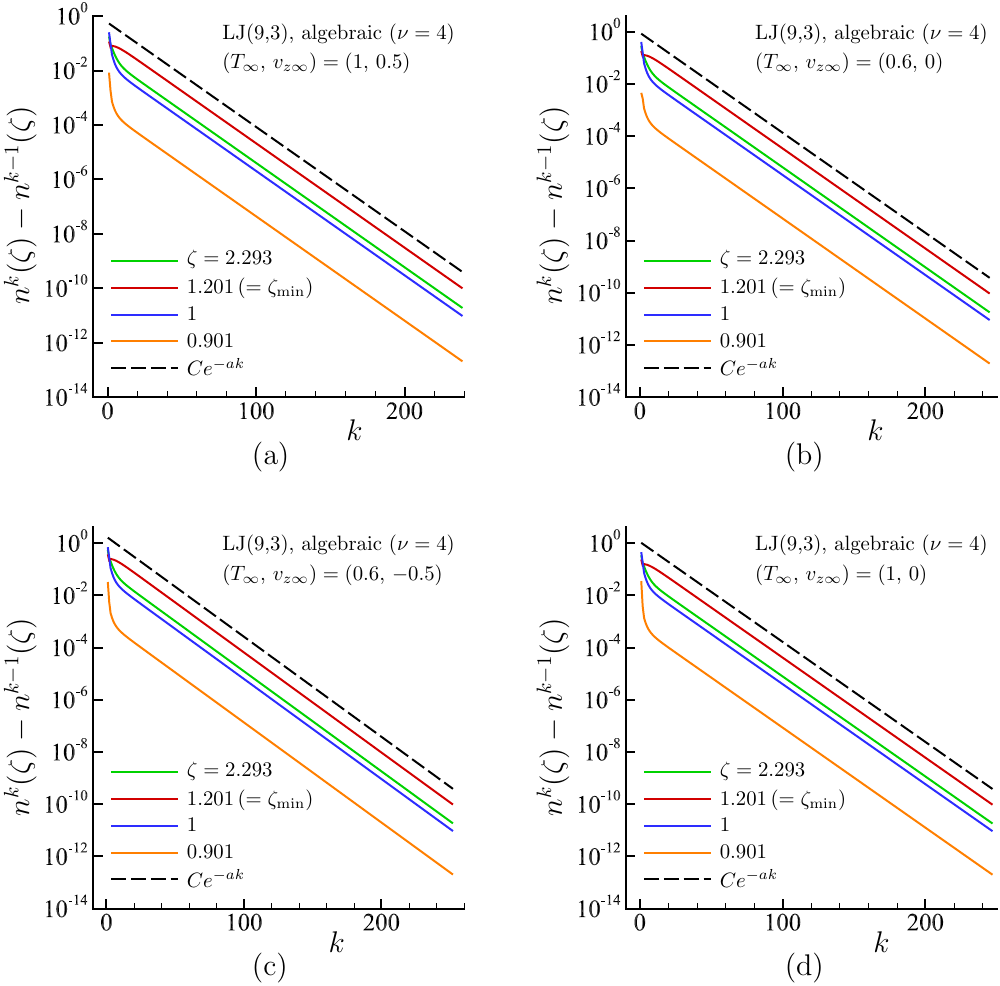}
\caption{\label{fig4}
The difference $n^k(\zeta) - n^{k-1}(\zeta)$ versus $k$ at four different positions.
Panels (a)--(d) correspond to the cases (v)--(viii) in Table \ref{tab1}.
The green, red, blue, and orange solid lines indicate the results at $\zeta=2.293$,
$1.201\, (=\zeta_\text{min})$,
$1$, and $0.901$, respectively. 
The function $C e^{-ak}$ with $a$ in Table \ref{tab1} and an appropriate constant
$C$ is shown by the black dashed line.
}
\end{center}
\end{figure}
%%%%%%%%%%%%%%%%%%%%%%%%%%%%%%

%	%	%	%	%
\subsection{Behavior in the physisorbate layer}
%	%	%	%	%

In this section, we give some numerical results for the behavior
of the gas and physisorbed molecules inside the physisorbate layer.

\subsubsection{Profiles of macroscopic quantities}

We recall that $n (\zeta)$ indicates the dimensionless number density of the gas
molecules at the zeroth order in $\epsilon$ because of the notation 
agreement in Secs.~\ref{subsec:physisorbate} and \ref{subsec:half-space}
(that is, the superscript $\langle 0 \rangle$ as well as the hat has
been omitted) and is expressed as \eqref{HS-2-b} in terms of
the reduced velocity distribution function $F(\zeta, c_z)$.

Figure \ref{fig5} shows $n (\zeta)$ versus $\zeta$ in the cases (i)--(viii) in Table \ref{tab1};
panel (a) contains the cases (i)--(iv), and panel (b) the cases (v)--(viii).
The number density naturally increases in the physisorbate layer and
exhibits the maximum concentration around $\zeta =\zeta_\text{min}$.
The result for the case (viii) recovers the equilibrium
solution $n(\zeta) = e^{-W(\zeta)}$.
In this way, the profiles of $n(\zeta)$ visualizes the
physisorbate layer.

%%%%%%%%%%%%%%%%%%%%%%%%%%%%%%
\begin{figure}[htb]
\begin{center}
\includegraphics[width= 0.9\textwidth]{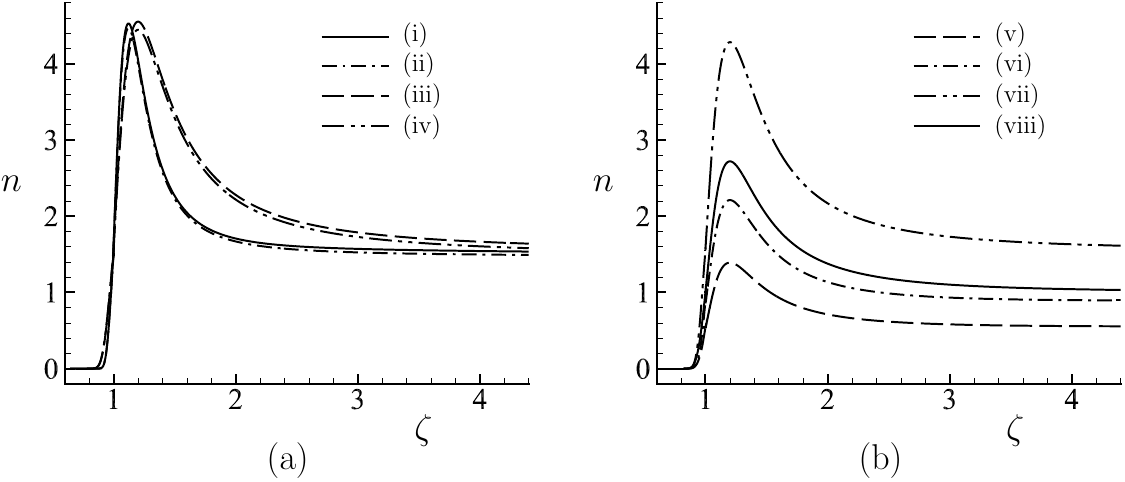}
\caption{\label{fig5} The number density $n$ versus $\zeta$.
(a) cases (i)--(iv)
in Table \ref{tab1}, (b) cases (v)--(viii) there. }
\end{center}
\end{figure}
%%%%%%%%%%%%%%%%%%%%%%%%%%%%%%

Let us denote by $c^\star \hat{\bm{v}}$ and $T_\text{w} \hat{T}$ the
(dimensional) flow (or macroscopic) velocity and the (dimensional) temperature,
respectively. We follow the notation 
agreement in Secs.~\ref{subsec:physisorbate} and \ref{subsec:half-space} 
and regard $\bm{v}$ and $T$ as the dimensionless flow velocity
and temperature at the zeroth order in $\epsilon$. Then, they are expressed as
\begin{align}
\bm{v} = \frac{1}{n} \int_{{\bf{R}}^3} \bm{c} f \text{d}\bm{c}, \qquad
T = \frac{1}{3 n} \int_{{\bf{R}}^3} \vert \bm{c} - \bm{v} \vert^2 f \text{d}\bm{c}.
\end{align}
It follows from the particle conservation \eqref{cons-c} that
\begin{align*}
v_z = \frac{1}{n} \int_{{\bf{R}}^3} c_z f \text{d}\bm{c}
= \frac{1}{n} \int_{-\infty}^\infty c_z F(\zeta, c_z) \text{d}c_z = 0,
\end{align*}
at any $\zeta$, that is, there is no macroscopic motion of the gas molecules in the normal
direction in the physisorbate layer. Then, $T$ is expressed as
\begin{align*}
T = \frac{1}{3 n} \int_{{\bf{R}}^3} [c_z^2 + (c_x-v_x)^2 + (c_y-v_y)^2] f \text{d}\bm{c}.
\end{align*}
This indicates that the temperature $T$ is obtained not by the reduced distribution
function $F(\zeta, c_z)$ but by the full distribution $f(\zeta, \bm{c}_\parallel, c_z)$.
However, once $F(\zeta, c_z)$ is obtained, $f$ can be reconstructed as described
in the last paragraph in Sec.~\ref{subsec:half-space}. Here, in order to simplify the
presentation, we consider, instead of the full temperature $T$, the {\it normal}
temperature $T_\perp$ defined by
\begin{align*}
T_\perp (\zeta) = \frac{1}{n} \int_{{\bf{R}}^3} c_z^2 f(\zeta, \bm{c}_\parallel, c_z) \text{d} \bm{c}
= \frac{1}{n} \int_{-\infty}^\infty c_z^2 F(\zeta, c_z) \text{d} c_z.
\end{align*}

The profile of $T_\perp (\zeta)$ is shown in Fig.~\ref{fig6}, where panel (a) contains
the cases (i)--(iv) in Table \ref{tab1} and panel (b) the cases (v)--(viii) there.
Since $F(\zeta, c_z) \to 0$ as $\zeta \to 0$, the normal temperature
$T_\perp$ becomes
$0/0$ as $\zeta \to 0$ and does not make sense in the numerical point of view.
Therefore, $T_\perp$ is shown
only for $\zeta \ge 1$ in Fig.~\ref{fig6}. The result for the case (viii) recovers the equilibrium
solution $T_\perp = 1$. The profile for each of the cases (i)--(iv) exhibits a sharp
downward peak close to $\zeta=1$, whereas that for each of the cases (v)--(vii)
exhibits a sharp upward peak there.

%%%%%%%%%%%%%%%%%%%%%%%%%%%%%%
\begin{figure}[htb]
\begin{center}
\includegraphics[width= 0.9\textwidth]{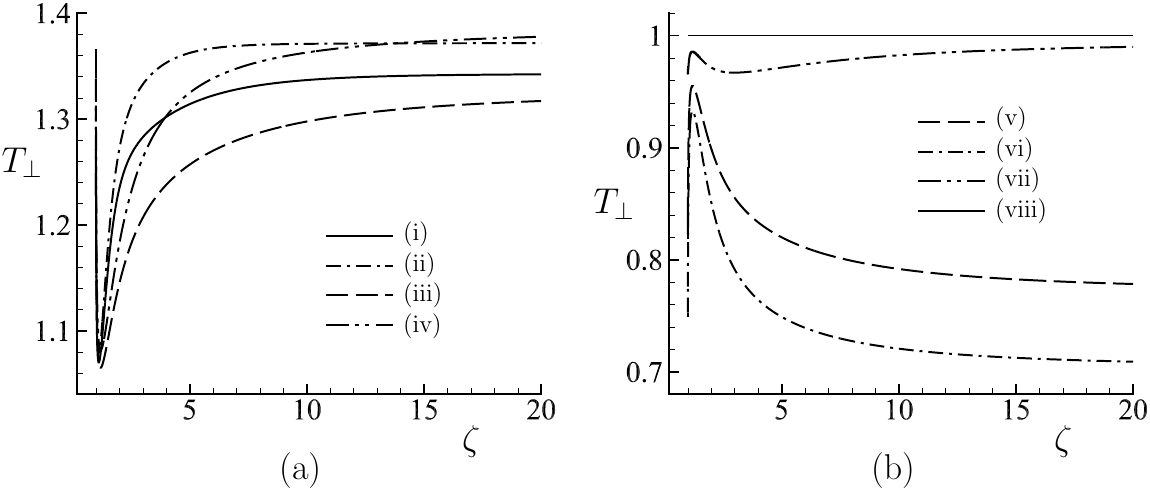}
\caption{\label{fig6} The normal temperature $T_\perp$ versus $\zeta$
for $\zeta \ge 1$.
(a) cases (i)--(iv)
in Table \ref{tab1}, (b) cases (v)--(viii) there.}
\end{center}
\end{figure}
%%%%%%%%%%%%%%%%%%%%%%%%%%%%%%

%
\subsubsection{Behavior of velocity distribution function}

Next, we consider the reduced velocity distribution function $F(\zeta, c_z)$.
In Figs.~\ref{fig7}--\ref{fig10}, $F(\zeta, c_z)$ versus $c_z$ is plotted at various values of $\zeta$
for the cases (iii) and (v)--(vii) in Table \ref{tab1}, respectively.
In each figure, panel (a) shows the profiles for $2.239\le\zeta\le\infty$,
(b) for $1.201\le\zeta\le 2.293$, (c) for $1\le\zeta\le 1.201$, and
(d) for $0.901\le\zeta\le 1$. The vertical dotted line indicates the
discontinuities in $F(\zeta, c_z)$.

The profiles of $F(\zeta, c_z)$ in Figs.~\ref{fig7}--\ref{fig10} exhibit discontinuities
at $\varepsilon = c_z^2/2 + W(\zeta) = 0$, namely, at $c_z=\pm \sqrt{-2W(\zeta)}$
(for $\zeta \ge 1$).
The mechanism of generation and propagation of the discontinuity
is explained in detail, for the case (iv) in Table \ref{tab1}, in Sec.~VII of \cite{AGK22}.
Here, we repeat a brief explanation. 

Let us consider the characteristic line
$c_z^2/2 + W(\zeta) = \varepsilon =0$ of \eqref{HS-2-a} for $c_z < 0$.
The cases $\varepsilon=0^+$ and $0^-$ correspond, respectively, to $F_-(\zeta, 0^+)$ and
$F_-(\zeta, 0^-)$ in the $(\zeta, \varepsilon)$ representation.
As $\zeta \to \infty$, $F_-(\zeta, 0^+)$ approaches $F_\infty (0^-)$
because of \eqref{HS-3-d}, whereas $F_-(\zeta, 0^-)$ approaches $F_+(\zeta_b(0^-), 0^-)$
because of \eqref{HS-3-continuity-b}.
Since $F_\infty (0^-)$ and $F_+(\zeta_b(0^-), 0^-)$ are generally different,
$F_- (\zeta, \varepsilon)$ is discontinuous at $\zeta=\infty$ and $\varepsilon=0$,
or equivalently, $F(\zeta, c_z)$ is discontinuous at $\zeta=\infty$ and $c_z=0$.
This discontinuity propagates along the characteristic line $c_z^2/2 + W(\zeta) =0$
for $c_z \le 0$, decaying slowly because of the interaction of gas molecules with
phonons, toward the solid surface and reaches the {\it turning} point
$(\zeta, c_z) = (\zeta_a (0), 0) = (1, 0)$.  Then, it propagates back along the
characteristic line $c_z^2/2 + W(\zeta) =0$ for $c_z >0$, continuing to decay slowly,
toward infinity and
finally reaches infinity. The discontinuity does not enter the range $\zeta < \zeta_a(0) = 1$
because the characteristic line $c_z^2/2 + W(\zeta) =0$ does not enter there.
This behavior is well represented by the profiles in Figs.~\ref{fig7}--\ref{fig10}.
The short vertical dotted-line segment at $c_z=0$ and $\zeta=1$ in 
panels (c) and (d) in each of Figs.~\ref{fig7}--\ref{fig10}, which is upward
in Figs.~\ref{fig7} and \ref{fig10} and downward in Figs.~\ref{fig8} and \ref{fig9},
indicates the height of the discontinuity in $F(\zeta, c_z)$ at $\zeta=1$.

%%%%%%%%%%%%%%%%%%%%%%%%%%%%%%
\begin{figure}[htb]
\begin{center}
\includegraphics[width= 0.9\textwidth]{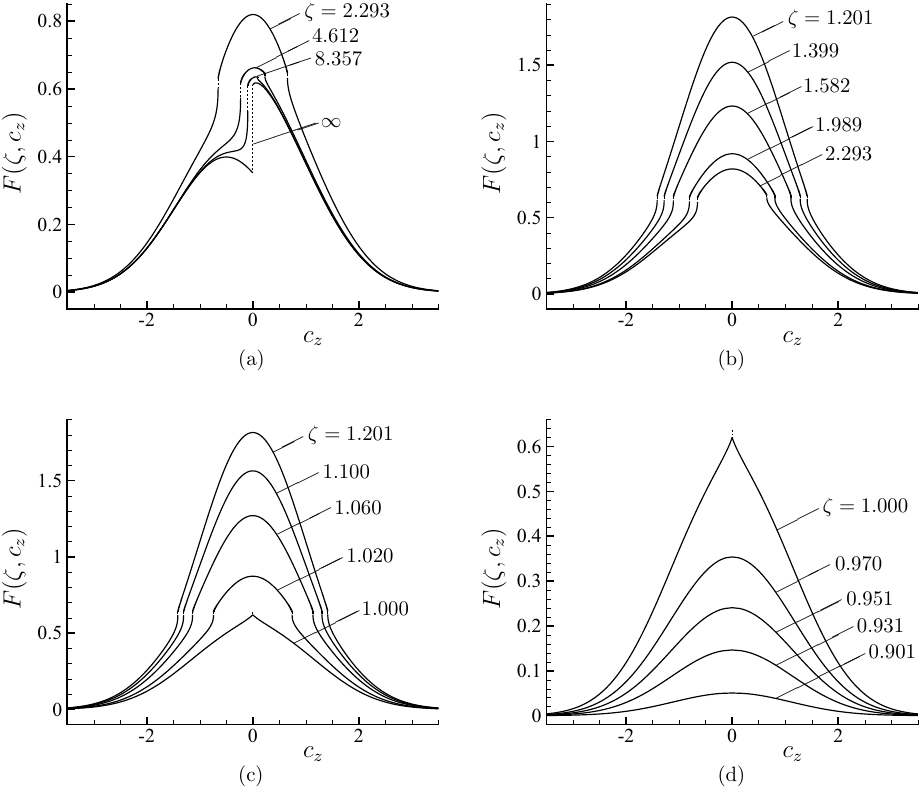}
\caption{\label{fig7} 
The reduced velocity distribution
$F(\zeta,c_z)$ versus $c_z$ in the case (iii) in Table \ref{tab1}. 
(a) $2.239\le\zeta\le\infty$, (b) $1.201\le\zeta\le 2.293$, 
(c) $1\le\zeta\le 1.201$, and (d) $0.901\le\zeta\le 1$.}
\end{center}
\end{figure}
%%%%%%%%%%%%%%%%%%%%%%%%%%%%%%
%
%%%%%%%%%%%%%%%%%%%%%%%%%%%%%%
\begin{figure}[htb]
\begin{center}
\includegraphics[width= 0.9\textwidth]{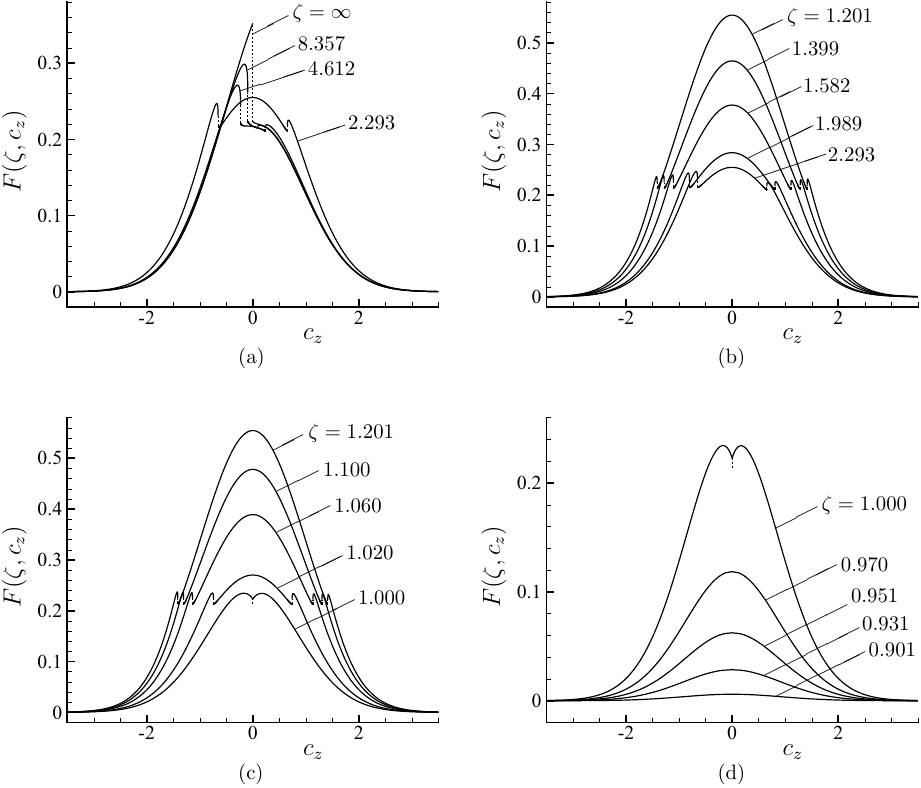}
\caption{\label{fig8} 
The reduced velocity distribution
$F(\zeta,c_z)$ versus $c_z$ in the case (v) in Table \ref{tab1}. 
(a) $2.239\le\zeta\le\infty$, (b) $1.201\le\zeta\le 2.293$, 
(c) $1\le\zeta\le 1.201$, and (d) $0.901\le\zeta\le 1$.}
\end{center}
\end{figure}
%%%%%%%%%%%%%%%%%%%%%%%%%%%%%%
%
%%%%%%%%%%%%%%%%%%%%%%%%%%%%%%
\begin{figure}[htb]
\begin{center}
\includegraphics[width= 0.9\textwidth]{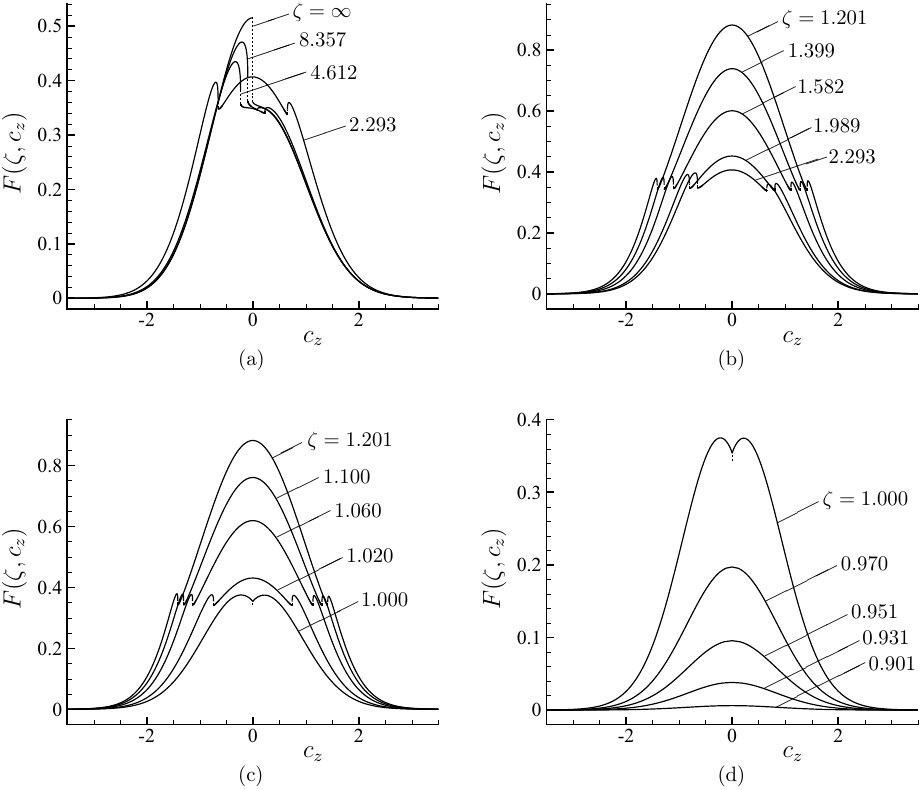}
\caption{\label{fig9} 
The reduced velocity distribution
$F(\zeta,c_z)$ versus $c_z$ in the case (vi) in Table \ref{tab1}. 
(a) $2.239\le\zeta\le\infty$, (b) $1.201\le\zeta\le 2.293$, 
(c) $1\le\zeta\le 1.201$, and (d) $0.901\le\zeta\le 1$.}
\end{center}
\end{figure}
%%%%%%%%%%%%%%%%%%%%%%%%%%%%%%
%
%%%%%%%%%%%%%%%%%%%%%%%%%%%%%%
\begin{figure}[htb]
\begin{center}
\includegraphics[width= 0.9\textwidth]{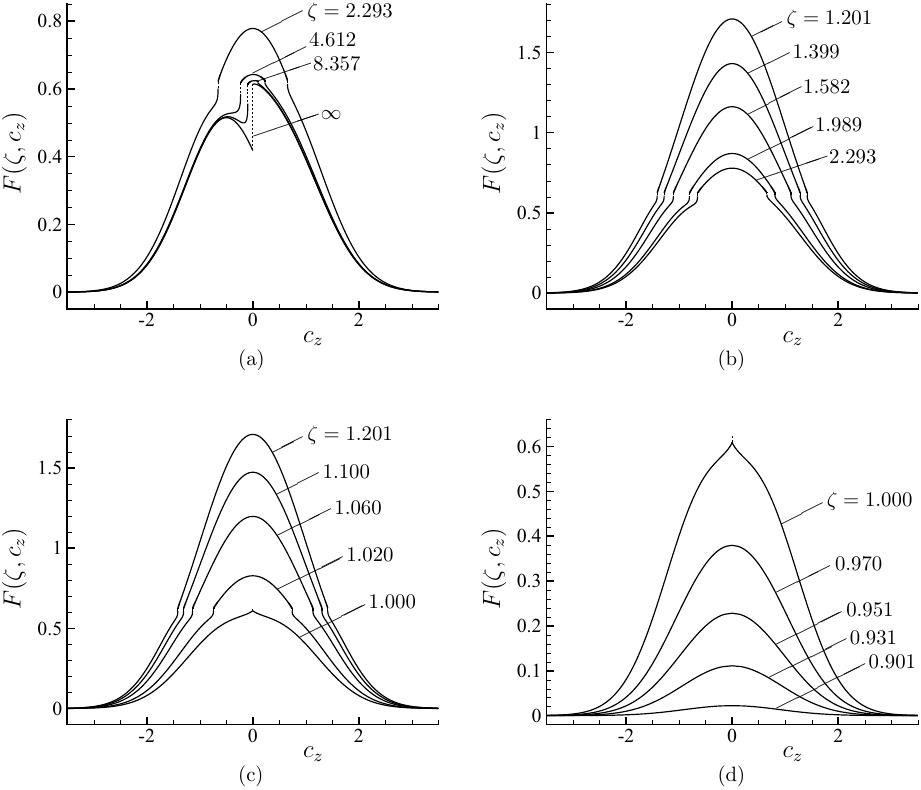}
\caption{\label{fig10} 
The reduced velocity distribution
$F(\zeta,c_z)$ versus $c_z$ in the case (vii) in Table \ref{tab1}. 
(a) $2.239\le\zeta\le\infty$, (b) $1.201\le\zeta\le 2.293$, 
(c) $1\le\zeta\le 1.201$, and (d) $0.901\le\zeta\le 1$.}
\end{center}
\end{figure}
%%%%%%%%%%%%%%%%%%%%%%%%%%%%%%

%	%	%	%	%
\subsection{Boundary condition for the Boltzmann equation}\label{subsec:BC}
%	%	%	%	%

In this paper, special attention has been focused on the half-space problem
\eqref{HS-2} for the reduced distribution function $F(\zeta, c_z)$ for
the physisorbate layer. The main purpose was to establish some rigorous
mathematical results summarized in Theorems \ref{thm-exist} and \ref{thrm-exp-conv}
for the problem \eqref{HS-2}. In addition, some numerical results
visualizing rigorous mathematical properties as well as showing
the behavior of the gas inside the physisorbate layer were presented.
Here, recalling that the original aim of considering the half-space
problem \eqref{HS-2} or \eqref{HS-1} was to establish the boundary
condition for the Boltzmann equation, we touch on this aspect of the
problem.

In \cite{AGK22}, analytical models of the boundary conditions for the
Boltzmann equation were proposed. The models were constructed
as the first and second approximations for the iteration scheme for
\eqref{HS-1}. The scheme is essentially the same as the scheme
\eqref{HS-3-it-intg} for \eqref{HS-3} [see (51) in \cite{AGK22}], so that
it is omitted here. The point is that the iteration starts not from
zero [cf.~\eqref{HS-3-it-init}] but from the equilibrium solution
\begin{align*}
f^{0} = \beta \frac{1}{(2\pi)^{3/2}} \exp \left( -\frac{\vert \bm{c} \vert^2}{2} - W(\zeta) \right),
\end{align*}
where $f^{0}$ indicates the zeroth guess for the approximating sequence
$\{f^k\}$ ($k=1,\, 2,\, \dots$) obtained by the iteration scheme
mentioned above, and $\beta$ is a constant to be determined in
such a way that the particle conservation \eqref{cons-c} is satisfied in
each $f^k$.

The model of the boundary condition \eqref{albedo-2}, based on the first
iteration, is obtained in the following form \cite{AGK22}:
\begin{align}\label{BC-1st}
& f_\text{g} (z, c_z) = [1 - \alpha (c_z^2)] f_\text{g} (z, -c_z) + \alpha(c_z^2)
\beta\, (2\pi)^{-3/2} \exp \left( - \vert \bm{c} \vert^2/2 \right),
\nonumber \\
& \qquad \qquad \qquad \qquad \qquad \qquad \qquad \qquad \qquad \qquad
\text{for} \;\; c_z > 0, \;\; \text{at}\;\; z=0,
\end{align}
where
\begin{align}
& \alpha (c_z^2)
= 1 - \big[ \theta \big( \zeta_a (c_z^2/2), \infty; c_z^2/2 \big) \big]^2 
\nonumber \\
& \qquad \; = 1 - \exp \left( - \sqrt{2} \int_{\zeta_a (c_z^2/2)}^\infty
\frac{\text{d} \xi}{\tau (\xi) \sqrt{ c_z^2/2 - {W}(\xi)}} \right),
\label{accomm} \\
& \beta = - \sqrt{2\pi} 
\left[ \int_0^\infty c_z \alpha (c_z^2) \exp \left( - c_z^2/2
\right) \text{d} c_z \right]^{-1}
\int_{c_z < 0} c_z \alpha (c_z^2) f_\text{g} (0,\, c_z) \text{d} \bm{c}.
\label{beta}
\end{align}
Here, $\zeta_a (c_z^2/2)$ is the solution of $W (\zeta) = c_z^2/2$,
and note that $0 < \alpha (c_z^2)  < 1$ holds.
In addition, it should be recalled that \eqref{BC-1st}--\eqref{beta}
are dimensionless and that the arguments $t$, $\bm{x}_\parallel$, and
$\bm{c}_\parallel$ are omitted in $f_\text{g}$ in \eqref{BC-1st} and \eqref{beta}.
The correspondence of the notation here and in \cite{AGK22} is as follows:
\begin{align*}
& \big( f_\text{g},\, \alpha,\, \beta,\, \bm{x}=(x, y, z),\, \bm{c}=(c_x, c_y, c_z),\, \tau,\, W \big)
\;\; \mbox{(here)}
\\
& \qquad \qquad \Longleftrightarrow 
\big( \hat{f}_\text{g},\, \hat{\alpha},\, \hat{\beta},\, \hat{\bm{x}}=(\hat{x}, \hat{y}, \hat{z}),\,
\hat{\bm{c}}=(\hat{c}_x, \hat{c}_y, \hat{c}_z),\, \hat{\tau}_\text{ph},\, \hat{\textsc{w}} \big)
\;\; \mbox{(in \cite{AGK22})}.
\end{align*}
The reader is referred to \cite{AGK22} for the dimensional form of
\eqref{BC-1st}--\eqref{beta} as well as for the generalization, such as the
cases of varying wall temperature and curved boundary.
The model based on the second iteration, which is
also obtained in \cite{AGK22} and detailed in \cite{KAG23}, is less explicit, so that it is omitted here.

As one can see from \eqref{BC-1st}, the dependence on $c_x$ and $c_y$ of the
reflected distribution $f_\text{g} (0, c_z)$ ($c_z > 0$) is determined partially by that
of the incident distribution $f_\text{g} (0, c_z)$ ($c_z < 0$) and partially by
the thermalizing term $\exp (- \vert \bm{c} \vert^2/2)$.
This tendency is more or less the same for the original half-space problem
\eqref{HS-1}. In other words, the problem \eqref{HS-1} determines the
$c_z$-dependence of the reflected molecules crucially but not the $c_x$- and
$c_y$-dependence.

For this reason and in consistency with the main discussions in this paper,
we consider the models of the boundary condition only for the reduced
distribution function  $F(\zeta, c_z)$. Denoting
\begin{align}\label{marginal-Fg}
F_\text{g} (z, c_z) = \int_{-\infty}^\infty \int_{-\infty}^\infty
f_\text{g} (z, \bm{c}_\parallel, c_z) \text{d} c_x \text{d} c_y,
\end{align}
and integrating \eqref{BC-1st} with respect to $c_x$ and $c_y$ each from
$-\infty$ to $\infty$, one obtains
\begin{align}\label{BC-1st-F}
& F_\text{g} (z, c_z) = [1 - \alpha (c_z^2)] F_\text{g} (z, -c_z) + \alpha(c_z^2)
\beta\, (2\pi)^{-1/2} \exp \left( - c_z^2/2 \right),
\nonumber \\
& \qquad \qquad \qquad \qquad \qquad \qquad \qquad \qquad \qquad \qquad
\text{for} \;\; c_z > 0, \;\; \text{at}\;\; z=0,
\end{align}
where $\alpha (c_z^2)$ is given by \eqref{accomm}, and $\beta$ is recast as
\begin{align}\label{beta-2}
\beta = - \sqrt{2\pi} 
\left[ \int_0^\infty c_z \alpha (c_z^2) \exp \left( - c_z^2/2
\right) \text{d} c_z \right]^{-1}
\int_{-\infty}^0 c_z \alpha (c_z^2) F_\text{g} (0,\, c_z) \text{d} c_z.
\end{align}

Figure \ref{fig11} shows the reduced velocity distribution for the reflected molecules
$F_\text{g} (0, c_z)$ ($c_z > 0$) in response to that for the incident molecules
$F_\text{g} (0, c_z)$ ($c_z < 0$) when the latter is given by \eqref{F-infty},
i.e.,
\begin{align}\label{F-input}
F_\text{g} (0, c_z) = \frac{1}{\sqrt{2\pi T_\infty}} \exp \left(
- \frac{(c_z - v_{z \infty})^2}{2T_\infty} \right), \quad
\text{for}\;\; c_z < 0,
\end{align}
for the cases (iii) and (v)--(vii) in Table \ref{tab1}; panels 
(a), (b), (c), and (d) correspond
to the cases (iii), (v), (vi), and (vii), respectively. In the figure, 
the thin line indicates the result based on the first iteration model,
i.e., \eqref{BC-1st-F} with
\eqref{accomm} and \eqref{beta-2}, the thick line for $c_z>0$
that based on the
second iteration model (see \cite{AGK22,KAG23} for its form), and the small circles
that based on the numerical solution taken from Figs.~\ref{fig7}--\ref{fig10}, i.e., $F(\zeta, c_z)$
for $\zeta = \infty$ for $c_z > 0$.
The thick line for $c_z < 0$ indicates the incident distribution
\eqref{F-input}. At least in the cases (iii) and (v)--(vii) in Table \ref{tab1}, the model
based on the second iteration shows very good agreement with the numerical
solution.

%%%%%%%%%%%%%%%%%%%%%%%%%%%%%%
\begin{figure}[htb]
\begin{center}
\includegraphics[width= 0.9\textwidth]{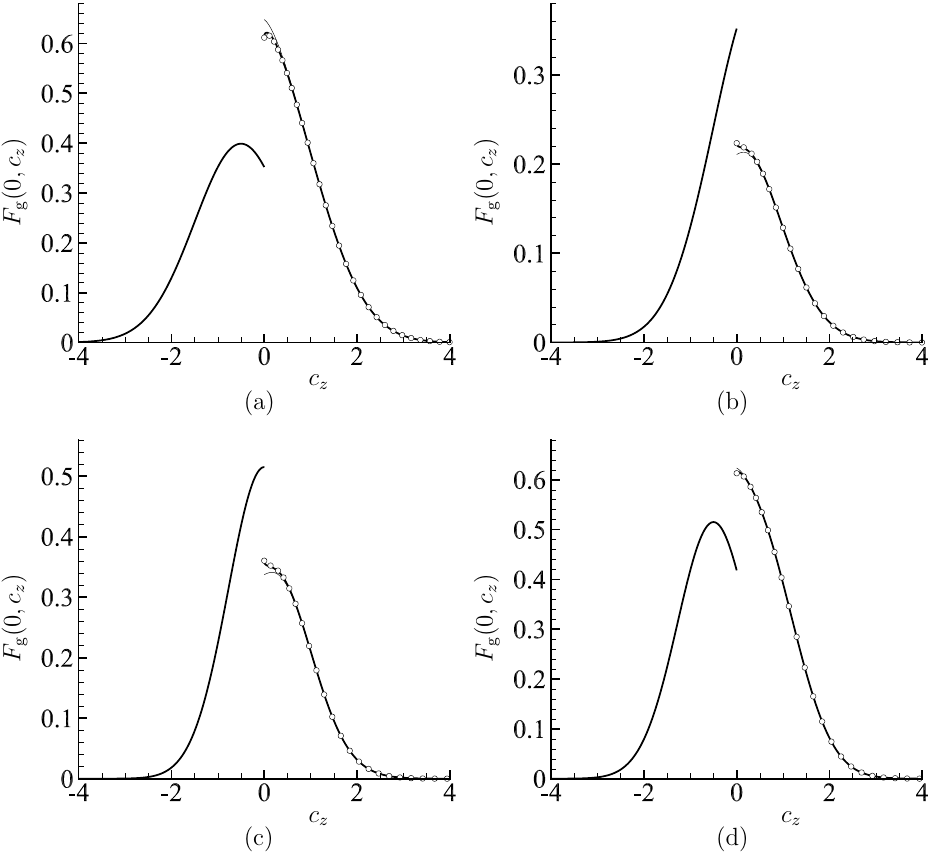}
\caption{\label{fig11} 
$F_\text{g} (0, c_z)$ versus $c_z$. 
Panels (a), (b), (c), and (d) correspond to the cases (iii), (v), (vi), and (vii) 
in Table \ref{tab1}, respectively. 
The thin line indicates the result based on the first iteration model, i.e.,
\eqref{BC-1st-F} with \eqref{accomm} and \eqref{beta-2}, 
the thick line for $c_z>0$ that based on the second iteration model (see \cite{AGK22,KAG23} for its form), 
and the circles that based on the numerical solution.
The thick line for $c_z < 0$ indicates the incident distribution \eqref{F-input}.}
\end{center}
\end{figure}
%%%%%%%%%%%%%%%%%%%%%%%%%%%%%%

%******************************************

%	%	%	%	%
%	%	%	%	%
\section{Concluding remarks}\label{sec:remarks}
%	%	%	%	%
%	%	%	%	%

The present study concerns a kinetic model of gas-surface interactions and
resulting boundary conditions for the Boltzmann equation on a solid surface \cite{AGK22}.
In the process of the construction of the boundary conditions, a half-space problem of
a kinetic equation describing the behavior of the gas molecules in a thin layer on the solid
surface (physisorbate layer), in which the molecules are subject to an attractive-repulsive
potential and interacting with phonons, plays a crucial role. To be more specific, the
solution of this half-space problem establishes the relation between the velocity distribution
for the incident molecules and that for the outgoing molecules at infinity, and this relation
is nothing but the boundary condition for the Boltzmann equation that is valid outside
the physisorbate layer. In \cite{AGK22}, this fact was clarified by a formal asymptotic
analysis of the kinetic model for gas-surface interactions, and the half-space problem
was solved approximately and numerically to establish the boundary condition for the
Boltzmann equation.

In the present paper, we have deepened the analysis and established
rigorously the essential mathematical properties of the physisorbate-layer problem.
The results are summarized in Theorems \ref{thm-exist} and \ref{thrm-exp-conv} in
Sec.~\ref{subsec:main}. To be more specific, the existence and uniqueness of the solution
have been established. The existence was proved by using an approximating sequence,
which turned out to be non-decreasing and to have an upper bound. Furthermore, the
approximating sequence was proved to converge to the solution exponentially fast
with an explicit estimate of the convergence rate. 

In addition to the rigorous mathematical discussions in Sec.~\ref{sec:math}, we have also
carried out some numerical computations, using the method introduced in \cite{AGK22},
for some specific potentials and gas-phonon relaxation times, to visualize a part of
the mathematical properties shown in Theorems \ref{thm-exist} and \ref{thrm-exp-conv}
as well as to demonstrate the behavior of the velocity distribution function and the
macroscopic quantities inside the physisorbate layer (Sec.~\ref{sec:numerical}). In this
connection, the analytical models of the boundary condition for the Boltzmann equation,
established in \cite{AGK22} on the basis of first and second approximations for the
half-space problem for the physisorbate layer, were also mentioned (Sec.~\ref{subsec:BC}).

The kinetic approach in \cite{AGK22}, which is based on the kinetic scaling in contrast
to the fluid scaling considered in \cite{AGH16,AG19a,AG19b,AG21}, can be generalized
in various directions. For instance, it would be more practical to consider a confinement
potential with a periodic modulation along the solid surface. It would also be important
to include chemisorption in addition to physisorption and to consider
situations where the phonons are not in equilibrium. Each direction should pose
new mathematical problems for relevant kinetic equations, and the mathematical
analysis performed in this paper would provide good and useful guidelines for them.

%	%	%	%	%
%	%	%	%	%
\bmhead{Acknowledgments}
%	%	%	%	%
%	%	%	%	%

This work was supported by a French public grant as part of the
Investissement d'avenir project, reference ANR-11-LABX-0056-LMH,
LabEx LMH and by the grant-in-aid No.~21K18692 from JSPS in Japan.

\vspace*{6mm}
\noindent
Conflicts of Interest: The authors declare no conflict of interest.

\vspace*{2mm}
\noindent
Data Availability Statement: The data that support Figs.~1--11 are available from the
corresponding author, K.A., upon reasonable request. All other data are provided in the paper.

%%===========================================================================================%%
%% If you are submitting to one of the Nature Portfolio journals, using the eJP submission   %%
%% system, please include the references within the manuscript file itself. You may do this  %%
%% by copying the reference list from your .bbl file, paste it into the main manuscript .tex %%
%% file, and delete the associated \verb+\bibliography+ commands.                            %%
%%===========================================================================================%%

\bibliography{sn-bibliography}% common bib file
%% if required, the content of .bbl file can be included here once bbl is generated
%%\input sn-article.bbl

%% Default %%

%

\end{document}